\title[Stable-limit partially symmetric Macdonald functions]{Stable-limit partially symmetric Macdonald functions and parabolic flag Hilbert schemes}
\author{Daniel Orr}
\address{Department of Mathematics (0123),
460 McBryde Hall, Virginia Tech,
225 Stanger Street,
Blacksburg, VA 24061-1026}
\email{dorr@vt.edu}
\author{Milo Bechtloff Weising}
\address{Department of Mathematics (0123),
460 McBryde Hall, Virginia Tech,
225 Stanger Street,
Blacksburg, VA 24061-1026}
\email{milojbw@vt.edu}
\date{\today}
\theoremstyle{plain}
\newtheorem{thm}{Theorem}[section]
\newtheorem*{thm-main}{Main Theorem}
\newtheorem{defn}[thm]{Definition}
\newtheorem{cor}[thm]{Corollary}
\newtheorem{lem}[thm]{Lemma}
\newtheorem{example}[thm]{Example}
\newtheorem{prop}[thm]{Proposition}
\newtheorem{conj}[thm]{Conjecture}
\newtheorem{rem}[thm]{Remark}
\newcommand{\la}{\lambda}
\newcommand{\ga}{\gamma}
\newcommand{\A}{\mathbb{A}}
\newcommand{\B}{\mathbb{B}}
\newcommand{\C}{\mathbb{C}}
\newcommand{\K}{\mathbb{K}}
\newcommand{\T}{\mathbb{T}}
\newcommand{\Q}{\mathbb{Q}}
\newcommand{\Y}{\mathbb{Y}}
\newcommand{\Z}{\mathbb{Z}}
\newcommand{\PFH}{\mathrm{PFH}}
\newcommand{\Hilb}{\mathrm{Hilb}}
\newcommand{\sort}{\mathrm{sort}}
\newcommand{\id}{\mathsf{e}}
\newcommand{\dd}{\mathsf{d}}
\newcommand{\TT}{\mathsf{T}}
\newcommand{\yy}{\mathsf{y}}
\newcommand{\zz}{\mathsf{z}}
\newcommand{\sh}{\mathrm{sh}}
\newcommand{\dg}{\mathrm{dg}}
\newcommand{\ta}{\tilde{a}}
\newcommand{\fS}{\mathfrak{S}}
\begin{document}

\begin{abstract}
The modified Macdonald functions $\widetilde{H}_{\mu}$ are fundamental objects in modern algebraic combinatorics. Haiman showed that there is a correspondence between the $(\mathbb{C}^{*})^2$-fixed points $I_{\mu}$ of the Hilbert schemes $\mathrm{Hilb}_{n}(\mathbb{C}^2)$ and the functions $\widetilde{H}_{\mu}$ realizing a derived equivalence between $(\mathbb{C}^{*})^2$-equivariant coherent sheaves on $\mathrm{Hilb}_{n}(\mathbb{C}^2)$ and $(\mathfrak{S}_n \times (\mathbb{C}^{*})^2)$-equivariant coherent sheaves on $(\mathbb{C}^2)^n.$ Carlsson--Gorsky--Mellit introduced a larger family of smooth projective varieties $\mathrm{PFH}_{n,n-k}$ called the parabolic flag Hilbert schemes. They showed that an algebra $\mathbb{B}_{q,t}$, directly related to the double Dyck path algebra $\mathbb{A}_{q,t}$ employed in Carlsson--Mellit's proof of the Shuffle Theorem, acts naturally on the $(\mathbb{C}^{*})^2$-equivariant K-theory $U_{\bullet}$ of these spaces and, moreover, there is a $\B_{q,t}$-isomorphism $\Phi: U_{\bullet} \rightarrow V_{\bullet}$ where $V_{\bullet}$ is the polynomial representation. The isomorphism $\Phi: U_{\bullet} \rightarrow V_{\bullet}$ is known to extend Haiman's correspondence. In this paper, we explicitly compute the images $\Phi(H_{\mu,w})$ of the normalized $(\mathbb{C}^{*})^2$-fixed point classes $H_{\mu,w}$ of the spaces $\mathrm{PFH}_{n,n-k}$ and show they agree with the modified partially symmetric Macdonald polynomials $\widetilde{H}_{(\lambda|\gamma)}$ introduced by Goodberry-Orr, confirming their prior conjecture. We use this result to give an explicit formula for the action of the involution $\mathcal{N}$ on $V_{\bullet}.$
\end{abstract}

\maketitle

\tableofcontents

\section{Introduction}

In \cite{GO}, the first named author and Goodberry formulated a conjecture describing the images of fixed-point classes under an isomorphism
\begin{align}\label{E:Phi}
\bigoplus_{k\ge 0}\bigoplus_{n \ge k} K_\T(\PFH_{n,n-k})_\mathrm{loc} \overset{\Phi}{\longrightarrow} \bigoplus_{k\ge 0}\Lambda\otimes\K[y_1,\dotsc,y_k]
\end{align}
arising from the work of Carlsson, Gorsky and Mellit \cite{GCM_2017}. Here, $\PFH_{n,n-k}$ is the parabolic flag Hilbert scheme of \cite{GCM_2017}, which is acted upon naturally by a torus $\T=(\C^\times)^2$, and $K_{\T}(\PFH_{n,n-k})_{\mathrm{loc}}$ denotes its localized torus-equivariant $K$-theory. The isomorphism \eqref{E:Phi} is one of $\B_{t,q}$-representations, where $\B_{t,q}$ is a cousin of the Carlsson--Mellit algebra $\A_{t,q}$ introduced in \cite{CM_2015}. On the other side of the isomorphism $\Phi$, $\Lambda$ is the algebra of symmetric functions over the field $\K=\Q(q,t)=\mathrm{Frac}(K_\T(\mathrm{pt}))$ in variables $x_1,x_2,\dotsc$, and the total space $\bigoplus_{k\ge 0}\Lambda\otimes\K[y_1,\dotsc,y_k]$ affords the polynomial representation of $\B_{t,q}$.

We recall that $\Phi$ respects the index $k$ on both sides and that for $k=0$ one obtains an isomorphism
\begin{align}\label{E:Phi-0}
\bigoplus_{n\ge 0} K_\T(\Hilb_n)_\mathrm{loc} \overset{\Phi_0}{\longrightarrow} \Lambda
\end{align}
where $\Hilb_n$ is the Hilbert scheme of $n$ points in the plane. It is shown in \cite{GCM_2017} that $\Phi_0$ agrees up to some explicit scalars with Haiman's geometric realization of modified Macdonald symmetric functions. The conjecture of \cite{GO} extends this realization to the realm of partially-symmetric Macdonald functions.

The goal of the present paper is to prove the conjecture of \cite{GO}. Further notations and the precise statement of the conjecture will be recalled along the way. In a few instances we will reprove known results of Ion--Wu (in the context of $\mathbb{B}_{t,q}$ rather than $\mathbb{A}_{t,q}$) and we will make clear when we do so. The work of Ion--Wu \cite{Ion_2022}, according to our conventions, deals with the $\mathbb{A}_{t,q}$ action on a larger space $W_{\bullet}$ (which we will not detail here) containing $V_{\bullet}$ as a submodule and relates this action to that of their stable-limit double affine Hecke algebra. When passing from $\mathbb{A}_{t,q}$ to $\mathbb{B}_{t,q}$ and $W_{\bullet}$ to $V_{\bullet}$ there are some subtleties and we reprove some of these known results in order to make these subtleties clear.

\subsection{Acknowledgements}
We would like to thank Nicolle Gonz\'{a}lez, Ben Goodberry, Eugene Gorsky, Mark Shimozono, and Monica Vazirani for helpful conversations and related collaborations. D.O. was supported by the Simons Foundation.

\section{Definitions and Conventions}

\subsection{Basic notation}

We work over the field $\K = \mathbb{Q}(q,t)$. 

For $n\in\Z_{\ge 0}$, we write $\Z_{\ge 0}^n=(\Z_{\ge 0})^n$. Elements of $\Z_{\ge 0}^n$ are called compositions. For $\nu\in\Z_{\ge 0}^n$, let $|\nu|=\nu_1+\dotsm+\nu_n$.

Let $\Y$ be the set of integer partitions. For $\la=(\la_1,\la_2,\dotsc)\in\Y$, let $\ell(\la)$ be its length, $|\la|=\la_1+\la_2+\dotsm$ its size, $n(\la)=\sum_{i\ge 1} (i-1)\la_i$, and $m_i(\la)$ the number of parts $\la_j$ of $\la$ equal to $i$, for all $i\ge 1$. Let $\Y_m$ be the set of partitions with length at most $m$. We identify $\Y_m$ with a subset of $\Z_{\ge 0}^m$ in the natural way. For any $\nu\in\Z_{\ge 0}^m$, we let $\mathrm{sort}(\nu)\in\Y_m$ be the weakly decreasing rearrangement of $\nu$.

Let $\mathfrak{S}_{n}$ be the symmetric group of permutations of the set $\{1,\ldots,n \}$ and let $s_1,\ldots,s_{n-1}$ denote the simple transpositions $s_i = (i, i+1).$ 

The length $\ell(w)$ of a permutation $w\in\fS_n$ is the minimal number of $s_i$ occurring in an expression $w = s_{i_1}\cdots s_{i_r}$, and when $r=\ell(w)$ such an expression is called reduced. We will write $w_{0}^{(n)}$ for the longest element of $\mathfrak{S}_{n}$ and when context is clear we will simply write $w_0 = w_0^{(n)}.$

For $\lambda \in \Y_n$, we define
    $$S_{\lambda}^{(n)}(t):= \sum_{w\in (\mathfrak{S}_n)_{\lambda}} t^{\ell(w)}\in\K$$
where $(\mathfrak{S}_n)_{\lambda}$ is the stabilizer of $\lambda$ in $\fS_n$. For $\la\in\Y$, we set $S_\la(t)=S_\la^{(\ell(\la))}(t)$. In terms of the $t$-factorials $[m]_{t}!:= \prod_{1 \leq i \leq m} \frac{1-t^i}{1-t}$, we have $S_\la(t)=\prod_{i\ge 1} [m_i(\la)]_t!$ and $S_\la^{(n)}(t)=[n-\ell(\la)]_t!\prod_{i\ge 1} [m_i(\la)]_t!$.

Let $\mathbbm{1}$ be the indicator function given by $\mathbbm{1}(\mathrm{True})=1$ and $\mathbbm{1}(\mathrm{False})=0$.

\subsection{Demazure-Lusztig operators}

Due to the differing conventions for the Demazure-Lusztig operators appearing across the literature, we will need to consider two versions of these operators $T_i, \mathsf{T}_{i}.$
\begin{defn}\label{d:dl}
For $1 \leq i \leq n-1$, define the \textbf{\textit{Demazure-Lusztig operators}}
$$T_i,\TT_i:\K[x_1,\ldots,x_n]\rightarrow \K[x_1,\ldots,x_n]$$
by 
\begin{align*}
T_i f &:= \frac{(t-1)x_{i+1}f + (x_{i+1}-tx_i)s_i(f)}{x_{i+1}-x_i}\\
\TT_i f &:= \frac{(t-1)x_if + (x_{i+1}-tx_i)s_i(f)}{x_{i+1}-x_i}.
\end{align*}
\end{defn}

The operators $T_i$ satisfy the relations:
\begin{enumerate}
    \item $(T_i-t)(T_i+1) = 0$
    \item $T_iT_j = T_jT_i$ if $|i-j| > 1$
    \item $T_iT_{i+1}T_i = T_{i+1}T_iT_{i+1}.$
\end{enumerate}

The operators $\mathsf{T}_i$ satisfy the same relations as the $T_i$ except for the quadratic relation. In this case we have $(\mathsf{T}_i -1)(\mathsf{T}_i + t) = 0.$

\begin{rem}
We may directly relate the operators $T_i$ and $\mathsf{T}_{i}$ by the following:
    $$T_i = t \mathsf{T}_i^{-1}.$$ 
\end{rem}

Whenever $w = s_{i_1}\cdots s_{i_r}$ is a reduced expression, we (unambiguously) define $T_{w} := T_{i_1}\cdots T_{i_r}$ and $\mathsf{T}_{w} := \mathsf{T}_{i_1}\cdots \mathsf{T}_{i_r}$.

\begin{defn}
    For $0\le m\le n$, we define the \textbf{\textit{partial symmetrizers}}
    $$P_{m}^{+}:= \sum_{ w\in \mathfrak{S}_m}T_{w}$$
    and 
    $$\epsilon^{(n)}_{n-m} := \frac{1}{[m]_{t}!} \sum_{w \in \mathfrak{S}_{(1^{n-m},m)}} t^{{m \choose 2} - \ell(w)} \mathsf{T}_{w}$$ 
    where $\mathfrak{S}_m=\langle s_1,\dotsc,s_{m-1}\rangle\subset\mathfrak{S}_n$ and
    $\mathfrak{S}_{(1^{n-m},m)}=\langle s_{n-m+1},\ldots,s_{n-1}\rangle\subset\mathfrak{S}_{n}$.
\end{defn}

\subsection{Diagrams}
The diagram of a composition $\nu\in\Z_{\ge 0}^n$ is the subset $\dg(\nu)\subset \Z^2$ given by
$$ \dg(\nu)=\{(i,j) \mid 1\le i\le n, 1\le j\le\nu_i\}. $$
Elements of $\dg(\nu)$ are called boxes of $\nu$. We identify a composition with its diagram and write $(i,j)\in\nu$ to mean $(i,j)\in \dg(\nu)$. 

For $\square=(i,j)\in\nu$, we will use the leg and arm functions
\begin{align*}
\ell_\nu(\square) &= \nu_i -j,\\
a_\nu(\square) &= \# \{ 1 \leq r < i \mid j\leq \nu_r \leq \nu_i\} + \# \{ i<r \leq n \mid j-1\leq \nu_r < \nu_i\}\\
\ta_\nu(\square) &= \# \{1\leq r < i \mid j \leq \nu_r \leq \nu_i \} + \# \{i<r \leq n \mid j \leq \nu_r < \nu_i \}.
\end{align*}

\begin{defn}[{\cite{goodberryarxiv}, \cite{lapointe2022msymmetric}}]\label{def:j}
For $\lambda \in \Y_m$ and $\gamma \in \mathbb{Z}_{\geq 0}^{k}$, define
    $$j_{(\lambda|\gamma)}(q,t) = \prod_{\square \in \lambda^{-}}(1-q^{\ell_\nu(\square)}t^{\tilde{a}_\nu(\square)+1}) \prod_{\square \in \gamma}(1-q^{\ell_\nu(\square)+1}t^{a_\nu(\square)+1})\in\K$$ 
    where $\nu=(\la_-,\gamma)$ and $\lambda_-$ is the weakly increasing rearrangement of $\lambda$.
    We view $\dg(\la_-)$ and $\dg(\ga)$ as subsets of $\dg(\nu)$ in the natural way.
\end{defn}

\begin{example}
Let $\la=(2,1)$ and $\ga=(1,0)$. Then $\nu=(1,2, 1,0)$ and $\dg(\nu)$ is
\begin{center}
\begin{tikzpicture}[scale=.6]
\draw (0,0) -- (0,1) -- (2,1) -- (2,0) -- (0,0);
\draw (1,0) -- (1,2) -- (2,2) -- (2,0);
\draw (2,0) -- (2,1) -- (3,1) -- (3,0) -- (2,0);
\draw[dashed] (3,0) -- (4,0);
\end{tikzpicture}
\end{center}
where the dashed segment indicates a part of size $0$. The values of $a_\nu(\square)$ or $\ta_\nu(\square)$---depending on whether $\square\in\la_-$ or $\square\in\ga$---are as follows:
\begin{center}
\begin{tikzpicture}[scale=.6]
\draw (0,0) -- (0,1) -- (2,1) -- (2,0) -- (0,0);
\draw (1,0) -- (1,2) -- (2,2) -- (2,0);
\draw (2,0) -- (2,1) -- (3,1) -- (3,0) -- (2,0);
\draw[dashed] (3,0) -- (4,0);
\node at (.5,.5) {\large $0$};
\node at (1.5,.5) {\large $2$};
\node at (1.5,1.5) {\large $0$};
\node at (2.5,.5) {\large $2$};
\end{tikzpicture}
\end{center}
Using Definition~\ref{def:j}, we have
$$ j_{(2,1|1,0)}(q,t)=(1-q^{0}t^{1})(1-q^{1}t^{3})(1-q^{0}t^{1})(1-q^{1}t^{3}). $$
\end{example}

\subsection{Macdonald polynomials}

We review here the relevant conventions and constructions regarding Macdonald polynomials we will use in this paper.

\begin{defn}
\begin{enumerate}
\item For $\nu \in \mathbb{Z}_{\geq 0}^n$, denote by $E_{\nu} = E_{\nu}(x_1,\ldots,x_n;q,t) \in \K[x_1,\ldots,x_n]$ the nonsymmetric Macdonald polynomial of type $\mathrm{GL}_{n}$ as defined in \cite{haglund2007combinatorial}. 
\item For $\lambda \in \Y_m$ and $\gamma \in \mathbb{Z}_{\geq 0}^{k}$, let $P_{(\la|\ga)}\in\K[x_1,\ldots,x_n]^{\mathfrak{S}_m}$ be the partially symmetric Macdonald polynomial as defined in \cite{goodberryarxiv}:
$$P_{(\lambda|\gamma)}:= \frac{P_{m}^{+}E_{(\lambda|\gamma)}}{S_{\lambda}^{(m)}(t)}.$$
where $(\la|\ga)=(\la,\ga)\in\Z_{\ge 0}^n$ and $n=m+k$. Further, let $J_{(\la|\ga)}$ be the integral form partially symmetric Macdonald polynomial as defined in \cite{goodberryarxiv}:
$$J_{(\lambda|\gamma)}:= j_{(\lambda|\gamma)}P_{(\lambda|\gamma)}$$
where $j_{(\la|\ga)}$ is the scalar of Definition~\ref{def:j}.
\end{enumerate}
\end{defn}

Let $\Lambda$ be the ring of symmetric functions over $\K$ in the variables $x_1,x_2,\ldots.$ For $n = m+k$, we make the identification 
$$\K[x_1,\ldots,x_n]^{\mathfrak{S}_m} \cong \K[x_1,\ldots,x_m]^{\mathfrak{S}_m} \otimes \K[y_1,\ldots,y_k]$$ where 
$x_i \mapsto x_i$ for $1 \leq i \leq m$ and $x_{m+i} \mapsto y_i$ for $1\leq i \leq k.$ Taking the graded inverse limit as $m\to\infty$, we arrive at
$$V_k:= \Lambda \otimes \K[y_1,\ldots,y_k].$$
We write elements of $V_k$ as $f(X|y)$, where $X=x_1+x_2+\dotsm$ in plethystic notation and $y=(y_1,\dotsc,y_k)$. We will sometimes write $V_{\bullet} := \bigoplus_{k \geq 0} V_k.$ 

\begin{defn}[]
     For $\la\in\Y$ and $\gamma \in \mathbb{Z}_{\geq 0}^k$, define 
    $$P_{(\lambda|\gamma)}\left(X | y\right):= \lim_{m \rightarrow \infty} P_{(\lambda|\gamma)}(x_1,\ldots,x_m,y_1,\ldots,y_{k}) \in V_k$$ and similarly 
    $$J_{(\lambda|\gamma)}\left(X | y\right):= \lim_{m \rightarrow \infty} J_{(\lambda|\gamma)}(x_1,\ldots,x_m,y_1,\ldots,y_{k}) \in V_k.$$ 
\end{defn}

Here $\lim_{m \rightarrow \infty}$ denotes the classical stable-limit (as opposed to a more subtle notion of limit discussed below). This definition is from \cite{GO}, but it is based on the the following stability result proved in \cite{Goodberry} and \cite{lapointe2022msymmetric}:
$$P_{(\lambda,0|\gamma)}(x_1,\ldots,x_m,0,y_1,\ldots,y_{k}) =P_{(\lambda|\gamma)}(x_1,\ldots,x_m,y_1,\ldots,y_{k}).$$ 


\subsection{Limit Cherednik operators}

We will need to discuss a related construction of Ion--Wu \cite{Ion_2022} and a few results from \cite{MBWArxiv}. We will see later in Corollary \ref{cor:tEJ} precisely how these two constructions are related.


\begin{defn}[\cite{Ion_2022}]
Define the space of \textbf{\textit{almost symmetric functions}} 
    $$\mathscr{P}_{as}^{+}:= \lim_{\rightarrow}\K[x_1,\ldots, x_k] \otimes \Lambda(x_{k+1}+\ldots) $$
where $\Lambda(x_{k+1}+\cdots)$ is the $\K$-algebra of symmetric functions in the variables $x_{k+1},\dotsc$.
Define operators $\mathscr{Y}_i$ for $i \geq 1$ on $\mathscr{P}_{as}^{+}$ as
$$\mathscr{Y}_1:= \underset{m}{\widetilde{\lim}}\, t^m \rho Y_{1}^{(m)}$$ 
where $\underset{m}{\widetilde{\lim}}$ denotes the $t$-adic stable-limit as defined Ion--Wu \cite{Ion_2022} and
$$\mathscr{Y}_{i+1}:= t^{-1}\mathsf{T}_i \mathscr{Y}_i \mathsf{T}_i$$ for $i \geq 1$, 
    where 
    \begin{enumerate}
        \item $Y_{1}^{(m)}:= \omega_{m}^{-1}\mathsf{T}_{m-1}^{-1}\cdots \mathsf{T}_{1}^{-1}$, where $\omega_{m}^{-1}(x_1^{a_1}\cdots x_m^{a_m}) = x_2^{a_1}\cdots x_{m}^{a_{m-1}}(qx_1)^{a_{m}}$
        \item $\rho(x_1^{a_1}\cdots x_m^{a_m}) = \mathbbm{1}(a_1 > 0)x_1^{a_1}\cdots x_m^{a_m}$.
    \end{enumerate}
\end{defn}

The $t$-adic stable-limit of Ion--Wu \cite{Ion_2022} is a more general (weaker) notion of convergence than the classical stable-limit. The distinction between these two definitions will not be important for any of the arguments in this paper.

\begin{thm}[\cite{Ion_2022}]
   The operators $\mathscr{Y}_i$ for $ i \geq 1$ are well-defined and mutually commute. 
\end{thm}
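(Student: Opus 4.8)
The plan is to establish well-definedness and commutativity of the operators $\mathscr{Y}_i$ by reducing everything to finite-rank statements about the usual Cherednik operators $Y_i^{(m)}$ and then passing to the limit. First I would recall the standard facts about the finite-rank $Y$-operators: the elements $Y_1^{(m)},\dotsc,Y_m^{(m)}$ of the affine Hecke algebra of $\mathrm{GL}_m$ mutually commute, and they are built from the $\mathsf{T}_i$ and the cyclic operator $\omega_m$ via $Y_i^{(m)} = \mathsf{T}_{i-1}\cdots\mathsf{T}_1\,\omega_m^{-1}\,\mathsf{T}_{m-1}^{-1}\cdots\mathsf{T}_i^{-1}$ (up to the conventions fixed in the excerpt). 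In particular $Y_{i+1}^{(m)} = \mathsf{T}_i^{-1} Y_i^{(m)} \mathsf{T}_i^{-1}$ for $1\le i\le m-1$, which after inserting the factor $t$ matches the recursion $\mathscr{Y}_{i+1} = t^{-1}\mathsf{T}_i\mathscr{Y}_i\mathsf{T}_i$ once one checks the normalization $t^{-1}\mathsf{T}_i(\cdot)\mathsf{T}_i$ agrees with conjugation by $t^{1/2}\mathsf{T}_i^{-1}$ on the relevant scaled operators; this is the bookkeeping that ties the definition of $\mathscr{Y}_{i+1}$ to the naive limit of $t^m\rho Y_{i+1}^{(m)}$.

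Next I would address well-definedness of $\mathscr{Y}_1$ itself: one must show that for a fixed almost-symmetric function $f$ lying in $\K[x_1,\dotsc,x_k]\otimes\Lambda(x_{k+1}+\cdots)$, the sequence $t^m\rho Y_1^{(m)} f$ stabilizes $t$-adically as $m\to\infty$. This is exactly the content of the Ion--Wu construction, so I would cite it; the key input is that the operator $\omega_m^{-1}\mathsf{T}_{m-1}^{-1}\cdots\mathsf{T}_1^{-1}$, after multiplication by $t^m$ and composition with the truncation $\rho$, has matrix coefficients (in the monomial basis) that converge $t$-adically, the point being that $\mathsf{T}_i^{-1}$ acting on a function symmetric in $x_i,x_{i+1}$ is just multiplication by $1$ up to a correction of $t$-adic order growing with the number of symmetric variables. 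Once $\mathscr{Y}_1$ is well-defined as an operator on $\mathscr{P}_{as}^+$, the operators $\mathscr{Y}_i$ for $i\ge 1$ are well-defined by the recursion, since each $\mathsf{T}_i$ preserves $\mathscr{P}_{as}^+$ (they act on the first finitely many variables and commute with symmetrization of the tail). I would note that one should verify $\mathscr{Y}_i$ maps $\mathscr{P}_{as}^+$ into itself, which again follows because conjugating a limit operator by finitely many $\mathsf{T}_j$ stays within the direct limit.

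For commutativity, $[\mathscr{Y}_i,\mathscr{Y}_j]=0$, the strategy is: for fixed $i<j$, choose $m$ large enough that both $Y_i^{(m)}$ and $Y_j^{(m)}$ make sense, use $[Y_i^{(m)},Y_j^{(m)}]=0$ in the finite-rank setting, multiply through by $t^{2m}$ and apply $\rho$, and then argue the limit of the composite equals the composite of the limits. The subtlety — and I expect this to be the main obstacle — is precisely this interchange of limit and composition: $\rho Y_i^{(m)}\rho Y_j^{(m)}$ versus $\rho Y_i^{(m)} Y_j^{(m)}$, i.e. one must show the intermediate truncation $\rho$ does not spoil the identity in the limit. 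The resolution is that on $\mathscr{P}_{as}^+$ the operators $t^m\rho Y_a^{(m)}$ converge to operators preserving the $x_1$-degree-positive-versus-$\Lambda$-tail decomposition in a way compatible with $\rho$, so that $\rho$ becomes, in the limit, an idempotent commuting appropriately; more concretely, one shows $\widetilde{\lim}_m t^m \rho Y_a^{(m)}$ and $\widetilde{\lim}_m t^m Y_a^{(m)}$ (on the larger space $W_\bullet$ where the latter makes sense) differ by something that vanishes after the next application of $\rho$, reducing the claim to associativity of composition on $W_\bullet$ together with the finite-rank commutation relation. Since the excerpt explicitly says the distinction between the $t$-adic and classical stable limits is immaterial here, I would carry out this convergence argument with whichever notion is most convenient, and I would also flag that an alternative, cleaner route is to cite Ion--Wu directly for the whole statement and merely indicate the translation of conventions (the factors of $t^m$ and $t^{-1}$, and $\mathsf{T}$ versus $T$), since the theorem as stated is attributed to \cite{Ion_2022}.
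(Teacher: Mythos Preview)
The paper gives no proof of this statement: it is simply quoted as a result of Ion--Wu \cite{Ion_2022}, with no argument or sketch. Your proposal therefore goes well beyond what the paper does; the ``alternative, cleaner route'' you flag at the end---citing \cite{Ion_2022} directly and only noting the conventions---is exactly and entirely what the paper does.

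That said, a brief comment on your sketch as a standalone argument. The outline is reasonable in spirit, but the step you yourself identify as the main obstacle---interchanging the $t$-adic limit with composition, i.e.\ controlling $\rho Y_i^{(m)}\rho Y_j^{(m)}$ versus $\rho Y_i^{(m)} Y_j^{(m)}$ in the limit---is genuinely nontrivial and is precisely where the work in \cite{Ion_2022} lies. Your proposed resolution (that the truncation $\rho$ becomes an idempotent commuting appropriately in the limit, or that the difference vanishes after a further $\rho$) is asserted rather than established; making this precise requires the careful $t$-adic estimates and the larger-space machinery that Ion--Wu develop. Also, your parenthetical that ``$t^{-1}\mathsf{T}_i(\cdot)\mathsf{T}_i$ agrees with conjugation by $t^{1/2}\mathsf{T}_i^{-1}$'' is not correct as stated and would need to be replaced by a direct check that $\mathscr{Y}_{i+1}$ as defined coincides with $\widetilde{\lim}_m t^m\rho Y_{i+1}^{(m)}$. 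For the purposes of this paper, simply citing \cite{Ion_2022} is both sufficient and what the authors do.
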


The action of the operators $\mathscr{Y}_i$ on $\mathscr{P}_{as}^+$ are simultaneously diagonalizable with a basis generalizing the finite-rank nonsymmetric Macdonald polynomials. We say a composition $\mu = (\mu_1,\ldots,\mu_k)$ is \textbf{\textit{reduced}} if $\mu_k \neq 0$ and we call $\ell(\mu) = k$ the length of $\mu.$ Given two vectors $v\in \mathbb{Z}^n$, $w \in \mathbb{Z}^m$ we define $v*w \in \mathbb{Z}^{n+m}$ as the concatenation of $v$ and $w.$ If $\lambda\in\Y$, we regard $\la$ as an element of $\Z_{\ge 0}^{\ell(\la)}$ when using this notation.

\begin{defn}[\cite{MBWArxiv}] \label{stable-limit non-sym MacD function defn}
    For a reduced composition $\mu$ and a partition $\lambda$ define the \textbf{\textit{stable-limit nonsymmetric Macdonald function}} $\widetilde{E}_{(\mu|\lambda)}\in\mathscr{P}_{as}^+$ as 
    $$\widetilde{E}_{(\mu|\lambda)}(x_1,x_2,\ldots;q,t) := \underset{n}{\widetilde{\lim}}\,  \epsilon_{\ell(\mu)}^{(n)}(E_{\mu*\la*0^{n-(\ell(\mu)+\ell(\la))}}(x_1,\ldots,x_n;q^{-1},t)). $$
\end{defn}

We will be using the following result later in this paper.

\begin{thm}[\cite{MBWArxiv}]\label{weight basis theorem}
    The $\widetilde{E}_{(\mu|\lambda)}$ are a $\mathscr{Y}$-weight basis for $\mathscr{P}_{as}^{+}$. Further, we have that 
    $$\mathscr{Y}_i(\widetilde{E}_{(\mu|\lambda)}) = \widetilde{\alpha}_{(\mu|\lambda)}(i) \widetilde{E}_{(\mu|\lambda)} $$ where $\widetilde{\alpha}_{(\mu|\lambda)}(i)$ is given explicitly by 

    \[ \widetilde{\alpha}_{(\mu|\lambda)}(i) = 
    \begin{cases}
    q^{\mu_i}t^{\ell(\mu)+\ell(\lambda)+1-\beta_{\mu*\lambda}(i)} & \text{if $i\leq \ell(\mu)$ and $\mu_i \neq 0$} \\
    0 & \text{otherwise}
     \end{cases}
\]
and 
$$\beta_{\nu}(i) := \#\{j: 1\leq j \leq i,\, \nu_j \leq \nu_i\} + \#\{j: i < j \leq \ell(\nu),\, \nu_i > \nu_j\}.$$
\end{thm}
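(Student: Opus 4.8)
The plan is to establish the eigenvalue formula for $\mathscr{Y}_i$ acting on $\widetilde{E}_{(\mu|\lambda)}$ by transporting the corresponding finite-rank statement through the $t$-adic stable limit. First I would recall the finite-rank situation: for a composition $\nu\in\Z_{\ge 0}^n$, the nonsymmetric Macdonald polynomial $E_\nu(x_1,\dotsc,x_n;q,t)$ is a joint eigenfunction of the Cherednik operators $Y_i^{(n)}$, with eigenvalues expressible via the spectral vector of $\nu$. The key point is to track what happens to those eigenvalues under (a) the substitution $q\mapsto q^{-1}$ and the reordering implicit in Definition~\ref{stable-limit non-sym MacD function defn}, (b) the renormalization $t^m\rho Y_1^{(m)}$ defining $\mathscr{Y}_1$, (c) the recursion $\mathscr{Y}_{i+1}=t^{-1}\mathsf{T}_i\mathscr{Y}_i\mathsf{T}_i$, and (d) the partial symmetrization $\epsilon^{(n)}_{\ell(\mu)}$, which must commute appropriately with the relevant $\mathscr{Y}_i$ so that $\widetilde{E}_{(\mu|\lambda)}$ remains an eigenvector in the limit. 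The well-definedness and commutativity of the $\mathscr{Y}_i$ is already available from the cited theorem of Ion--Wu, and the fact that the $\widetilde{E}_{(\mu|\lambda)}$ form a basis of $\mathscr{P}_{as}^+$ is the content of the first sentence of the statement; the remaining work is purely the eigenvalue computation.

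The main steps, in order, would be: (1) write down the finite-rank eigenvalue $Y_i^{(n)}E_\nu = \alpha^{(n)}_\nu(i)E_\nu$ explicitly in terms of $q^{\nu_i}$ and a power of $t$ governed by a counting function of the form $\#\{j\le i:\nu_j\le\nu_i\}+\#\{j>i:\nu_i>\nu_j\}$ — this is exactly $\beta_\nu(i)$; (2) apply the sign change $q\to q^{-1}$ used in the definition of $\widetilde{E}$, which replaces $q^{\nu_i}$ by $q^{-\nu_i}$, then check that the overall $t^m$ scaling in $\mathscr{Y}_1$ and the dependence on $n$ cancel the $n$-dependence in the exponent, leaving a finite quantity as $n\to\infty$; (3) verify the stability: show that for $n$ large the scalar $\epsilon^{(n)}_{\ell(\mu)}$ sends $E_{\mu*\lambda*0^{n-\ell(\mu)-\ell(\lambda)}}$ to a common eigenvector of the $Y_i^{(n)}$ for $i\le\ell(\mu)$, using that $\mu$ is reduced and that the trailing zeros and the $\lambda$-block are handled by the symmetrizer (here one uses the intertwiner/creation-operator description of how $E_\nu$ behaves under inserting zeros and under the Hecke algebra action); (4) pass to the limit, obtaining $\mathscr{Y}_1\widetilde{E}_{(\mu|\lambda)} = q^{\mu_1}t^{\ell(\mu)+\ell(\lambda)+1-\beta_{\mu*\lambda}(1)}\widetilde{E}_{(\mu|\lambda)}$ when $\mu_1\ne 0$, and $0$ otherwise (the $\rho$ projection kills the eigenvector precisely when $x_1$ does not appear, i.e. $\mu_1=0$); (5) propagate to general $i\le\ell(\mu)$ via the recursion $\mathscr{Y}_{i+1}=t^{-1}\mathsf{T}_i\mathscr{Y}_i\mathsf{T}_i$, checking that conjugation by $\mathsf{T}_i$ permutes the relevant eigenvectors and shifts $\beta$ by the expected amount, matching the claimed case distinction (in particular $\mathscr{Y}_i\widetilde{E}_{(\mu|\lambda)}=0$ once $i>\ell(\mu)$ or $\mu_i=0$, consistent with $\mathscr{Y}_i$ landing in the span of almost symmetric functions with a zero in slot $i$).

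The hard part will be step (3), the stability of the partial symmetrization under the limit together with its compatibility with the $\mathscr{Y}_i$-action: one must show that $\epsilon^{(n)}_{\ell(\mu)} E_{\mu*\lambda*0^{\cdots}}$ is genuinely a $\mathscr{Y}_i$-eigenvector for all relevant $i$ and not merely a generalized eigenvector, and that the eigenvalue formula is insensitive to $n$ once $n$ is large — this requires understanding how $\epsilon^{(n)}_{\ell(\mu)}$ interacts with the Cherednik operators $Y_i^{(n)}$ for $i\le\ell(\mu)$, i.e. that these operators preserve the image of the symmetrizer and act on it through the same spectral data. I expect this to follow from the commutation relations between the $Y_i^{(n)}$ and the $\mathsf{T}_j$ for $j\ge\ell(\mu)+1$ (the generators appearing in $\epsilon^{(n)}_{\ell(\mu)}$) — specifically, $Y_i^{(n)}$ for $i\le\ell(\mu)$ commutes with those $\mathsf{T}_j$ up to lower-order terms that vanish in the limit — but making this rigorous, and handling the $q\mapsto q^{-1}$ twist consistently throughout, is the technical core of the argument. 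A secondary subtlety is the precise bookkeeping of the $t$-power: one must confirm that the exponent $\ell(\mu)+\ell(\lambda)+1-\beta_{\mu*\lambda}(i)$ is exactly what survives after combining the $t^m$ normalization, the $t^{-1}$ factors from the recursion, and the $n$-dependent part of the finite-rank eigenvalue, which I would verify by a direct computation on small cases before committing to the general inductive step.
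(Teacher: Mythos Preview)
This theorem is not proved in the present paper: it is stated with the attribution \cite{MBWArxiv} and imported wholesale as a known result from the second author's earlier work. There is therefore no proof in this paper to compare your proposal against.

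Your outline is a plausible strategy for how such a result would be established in the original reference---reduce to the finite-rank Cherednik spectrum on $E_\nu$, show compatibility of the partial symmetrizer $\epsilon^{(n)}_{\ell(\mu)}$ with the first $\ell(\mu)$ Cherednik operators, and pass to the Ion--Wu limit---and you have correctly identified the technical crux (your step (3)). One small correction to your step (4): you write that $\rho$ kills the eigenvector ``precisely when $x_1$ does not appear, i.e.\ $\mu_1=0$,'' but the vanishing condition in the statement is $\mu_i=0$ \emph{or} $i>\ell(\mu)$, and since $\mu$ is only required to be reduced ($\mu_{\ell(\mu)}\neq 0$), intermediate entries $\mu_i$ with $1\le i<\ell(\mu)$ can be zero; the mechanism producing eigenvalue $0$ in those positions is not simply the $\rho$ projection on the first variable but rather the propagation through the recursion $\mathscr{Y}_{i+1}=t^{-1}\mathsf{T}_i\mathscr{Y}_i\mathsf{T}_i$ combined with how $\mathsf{T}_i$ acts when adjacent entries include a zero. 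If you intend to write out a full proof, that case distinction deserves separate treatment.
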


\subsection{The algebra $\B_{t,q}$ and its geometric representation}

Let $\A_{t,q}$ be the Carlsson-Mellit algebra, which is a $\Q(q,t)$-algebra generated by orthogonal idempotents $\id_0,\id_1,\dotsc$ and elements $$\dd_-,\dd_+,\dd_+^*,\TT_1,\TT_2,\dotsc, \yy_1,\yy_2,\dotsc,\zz_1,\zz_2,\dotsc$$ satisfying certain relations (see \cite{GCM_2017}, noting that we swap the roles $q$ and $t$). Let $\B_{t,q}\subset\A_{t,q}$ be the subalgebra generated by all generators of $\A_{t,q}$ except $\dd_+^*$.

For $n\in\Z_{\ge 0}$ and $0\le k\le n$, the parabolic flag Hilbert scheme $\PFH_{n,n-k}$ is a smooth variety parametrizing flags of ideals
$$ I_n\subset I_{n-1}\subset\dotsm\subset I_{n-k} \subset \C[x,y] $$
such that $\dim \C[x,y]/I_s=s$ for all $s$ and $yI_{n-k}\subset I_n$. The space $\PFH_{n,n-k}$ carries a natural action of $\T=(\C^\times)^2$ with isolated fixed points given by chains as above in which each $I_s$ is a \textit{monomial} ideal. These are naturally indexed as $I_{\mu,w}$ by pairs $(\mu,w)$, where $\mu$ is an integer partition of size $n$ and $w=(w_1,\dotsc,w_k)$ is a list $\T$-weights corresponding to a horizontal strip of size $k$ in $\mu$ such that the removal of boxes corresponding to $w_1,\dotsc,w_k$ in this order always leaves a partition. We refer to \cite{GCM_2017} for more details, and to \cite{GO} for our specific conventions.

We identify $\Z[q^{\pm 1},t^{\pm 1}]$ with $K_\T(\mathrm{pt})$ (the representation ring of $\T$), and $\K$ with its field of fractions. The localized equivariant $K$-groups $K_\T(\PFH_{n,n-k})_\mathrm{loc}$ are a finite-dimensional $\K$-vector spaces, with bases given by the classes $[I_{\mu,w}]$ of skyscraper sheaves over the fixed points $I_{\mu,w}$ described above (and depending on $n$ and $k$). We will write $U_k:= \bigoplus_{n \geq k} K_\T(\PFH_{n,n-k})_\mathrm{loc}$ for $k \geq 0$. The spaces $U_\bullet=(U_k)_{k\ge 0}$ affords the geometric representation of $\A_{t,q}$ \cite{GCM_2017}. The restriction of this action to $\B_{t,q}$ is given on the fixed-point bases as follows:
\begin{align}
\label{dd-geom}
\dd_-\cdot [I_{\mu,wx}] &=[I_{\mu,w}]\\
\label{TT-geom}
\TT_i\cdot [I_{\mu,w}] &= \frac{(t-1)w_{i+1}}{w_i-w_{i+1}}[I_{\mu,w}]+\frac{w_i-tw_{i+1}}{w_i-w_{i+1}}[I_{\mu,s_i(w)}]\\
\label{zz-geom}
\zz_i\cdot [I_{\mu,w}] &= w_i [I_{\mu,w}]
\end{align}
where $I_{\mu,w}\in\PFH_{n,n-k}$, $\TT_i$ has $1\le i<k$ and $\zz_i$ has $1\le i\le k$. Further,
\begin{align}
\dd_+ \cdot[I_{\mu,w}] &= -t^k \sum_{(\nu,xw)} xd_{\nu,\mu}\prod_{i=1}^k \frac{x-qw_i}{x-qtw_i}[I_{\nu,xw}]
\end{align}
where the summation runs over all indices $(\nu,xw)$ of $\T$-fixed points in $\PFH_{n+1,n-k}$ such that $\nu/\mu$ is a single box skew shape, and $d_{\nu,\mu}$ is a Macdonald-Pieri coefficient (see \cite{GCM_2017}). We note that the $\yy_i$ are expressed in terms of $\dd_+,\dd_-$ and the $\TT_i$. In this paper, we will only need the formulas \eqref{dd-geom}, \eqref{TT-geom}, and \eqref{zz-geom}.

\subsection{Polynomial representation}

The spaces $V_{\bullet} =(V_k)_{k\ge 0}$ afford the polynomial representation of $\B_{t,q}$ \cite{GCM_2017}. On $V_k=\Lambda\otimes\K[y_1,\dotsc,y_k]$, $\TT_1,\dotsc,\TT_{k-1}$ are the Demazure-Lusztig operators of Definition~\ref{d:dl} in the variables $y_1,\dotsc,y_k$, and $\yy_1,\dotsc,\yy_k$ are the corresponding multiplication operators by these variables. The operators $\dd_- : V_{k+1}\to V_k$ and $\dd_+: V_k \to V_{k+1}$ are given by
\begin{align*}
\dd_-\cdot f &=-f(X-(t-1)y_{k+1})\Omega(-y_{k+1}^{-1}X)\Big|_{y_{k+1}^{-1}}\\ 
\dd_+\cdot F &= \TT_1\dotsm \TT_k\cdot f(X+(t-1)y_{k+1}),
\end{align*}
where $A(y)|_{y^s}=A_s$ for a formal series $\sum_{s\in\Z} A_s y^s$, and $\Omega(-yX)=\sum_{i\ge 0} (-y)^i e_i(X)$ with $e_i$ denoting elementary symmetric functions.

We recall the necessary further details of this action and related results of \cite{Ion_2022} as they are needed below. Furthermore, as shown in \cite{GCM_2017}, there is a unique $\mathbb{B}_{t,q}$-module isomorphism $\Phi_\bullet:U_\bullet\overset{\sim}{\to} V_\bullet$ such that $\Phi_0([I_{\varnothing,\varnothing}])=1\in V_0$.

\section{Conjecture and Proof Strategy}

Here we review the main conjecture from \cite{GO} and outline the proof of its verification (Theorem \ref{main thm}) in this paper. For $F(X|y;q,t) \in V_k$ we write $F(X|y)^{*}:= F(X|y;q,t^{-1}).$  

\begin{defn}[Goodbery-Orr \cite{GO}]\label{modified partially sym MacD function}
    For $\lambda \in \mathbb{Y}$ and $\gamma \in \mathbb{Z}_{\geq 0}^k$ define the \textbf{\textit{modified partially symmetric Macdonald function}}
    $$\widetilde{H}_{(\lambda|\gamma)}(X|y):= t^{n(\mathrm{sort}(\lambda,\gamma))+|(\lambda|\gamma)|}\left(t^{-\ell(w_0^{(k)})}w_0^{(k)} T_{w_0^{(k)}}\mathcal{J}_{(\lambda|\gamma)}\left( \frac{X}{t^{-1}-1}\Big| y \right) \right)^{*}\in V_k$$
    where $w_0^{(k)}$ and $T_{w_0^{(k)}}$ are acting on the variables $y_1,\dotsc,y_k$.
\end{defn}

As in \cite{GCM_2017}, for a $\mathbb{T}$-fixed point $I_{\mu,w}$ let 
$$H_{\mu,w}:= (-1)^{|\mu|}t^{n(\mu')}q^{n(\mu)}[I_{\mu,w}].$$ We write $\phi$ for the bijection $\phi(\mu,w) = (\lambda,\gamma)$ between the indexing sets of the $\mathbb{T}$-fixed points $I_{\mu,w}$ of the $\PFH_{n,n-k}$ and pairs $(\lambda|\gamma)$ with $\lambda \in \mathbb{Y}$ and $\gamma \in \mathbb{Z}_{\geq 0}^k$ defined in \cite{GCM_2017} and detailed in \cite{GO}. In particular, if $\phi(\mu,w) = (\lambda,\gamma)$, then $\mu = \sort(\gamma_1+1,\ldots,\gamma_k+1,\lambda).$

\begin{conj}[\cite{GO}]\label{main conj}
    
The $\mathbb{B}_{t,q}$-module isomorphism $\Phi_\bullet : U_\bullet \to V_\bullet$ 
maps each normalized fixed point class $H_{\mu,w}$ to $\widetilde{H}_{(\lambda|\gamma)}(X|y)$ for $\phi(\mu,w) = (\lambda|\gamma).$
\end{conj}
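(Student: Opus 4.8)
### Proof strategy for Conjecture~\ref{main conj}

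The plan is to verify the conjecture by characterizing both families of elements --- the normalized fixed-point classes $\Phi_\bullet(H_{\mu,w})$ on one side, and the modified partially symmetric Macdonald functions $\widetilde{H}_{(\lambda|\gamma)}$ on the other --- as the unique solutions to the same recursive system built out of the $\B_{t,q}$-action. Since $\Phi_\bullet$ is a $\B_{t,q}$-module isomorphism, it suffices to exhibit an \emph{intrinsic} recursive description of the classes $H_{\mu,w}$ purely in terms of the generators $\dd_-,\dd_+,\TT_i,\zz_i$ acting on $U_\bullet$, and then to check that the elements $\widetilde{H}_{(\lambda|\gamma)}\in V_\bullet$ satisfy the image of that same recursion under the explicit polynomial action. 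The base case $k=0$, $\mu=\varnothing$ is fixed by the normalization $\Phi_0([I_{\varnothing,\varnothing}])=1$, together with the known agreement (up to the stated scalars) of $\Phi_0$ with Haiman's realization of the $\widetilde{H}_\mu$; this identifies $\Phi_0(H_{\mu,\varnothing})=\widetilde{H}_{(\mu|\varnothing)}$ and provides the $k=0$ anchor.

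The recursion itself has two moves, corresponding to the two ways of building a fixed point $I_{\mu,w}$ of $\PFH_{n,n-k}$ from smaller data. First, the operator $\zz_i$ acts diagonally on $[I_{\mu,w}]$ with eigenvalue $w_i$ by \eqref{zz-geom}, so the simultaneous $\zz$-eigenspace decomposition of $U_k$ is mirrored, via $\Phi$, by a simultaneous eigenspace decomposition of $V_k$; on the $V_\bullet$ side the relevant commuting operators are the limit Cherednik / Macdonald operators, and Theorem~\ref{weight basis theorem} together with Corollary~\ref{cor:tEJ} identifies the joint eigenbasis with (a rescaling of) the $\widetilde{E}_{(\mu|\lambda)}$, hence with the nonsymmetric building blocks of the $\widetilde{H}_{(\lambda|\gamma)}$. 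Matching $\zz$-eigenvalues $w_i$ with the explicit eigenvalues $\widetilde\alpha_{(\mu|\lambda)}(i)$ of Theorem~\ref{weight basis theorem} pins down the image up to a scalar in each eigenspace. Second, the lowering operator $\dd_-$ satisfies $\dd_-\cdot[I_{\mu,wx}]=[I_{\mu,w}]$ by \eqref{dd-geom}, so applying $\dd_+$ (whose geometric form is the Macdonald--Pieri formula) and then $\dd_-$ lets one climb between the spaces $U_k$; the corresponding statement on $V_\bullet$ is that $\dd_+$ is $\TT_1\cdots\TT_k$ composed with the plethystic shift $X\mapsto X+(t-1)y_{k+1}$, which is exactly the shape of the creation formula in Definition~\ref{modified partially sym MacD function} (note the factor $t^{-\ell(w_0)}w_0 T_{w_0}$ and the plethystic substitution $X/(t^{-1}-1)$). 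So the strategy is: (i) show the $H_{\mu,w}$ are determined from the $k=0$ classes by repeated application of $\dd_+$, symmetrization/Hecke operators, and restriction to $\zz$-eigenspaces; (ii) show $\widetilde{H}_{(\lambda|\gamma)}$ obeys the image of exactly these relations under the polynomial action; (iii) conclude by uniqueness and induction on $k$ and on $n$.

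In carrying this out I would first establish the precise dictionary between the partial symmetrizers $\epsilon^{(n)}_{n-k}$, $P^+_m$ and the Hecke action on $V_\bullet$, so that the stable limit defining $\widetilde E_{(\mu|\lambda)}$ matches the geometric $\TT_i$-action; then verify the $\dd_+$-compatibility, which amounts to comparing the Macdonald--Pieri coefficients $d_{\nu,\mu}$ and the rational factors $\prod_i\frac{x-qw_i}{x-qtw_i}$ in the geometric formula against the effect of $\TT_1\cdots\TT_k\circ(X\mapsto X+(t-1)y_{k+1})$ on the $J_{(\lambda|\gamma)}$; then handle the $*$-involution $t\mapsto t^{-1}$ and the normalizing powers of $t$ and $q$ bookkeeping (these are the $(-1)^{|\mu|}t^{n(\mu')}q^{n(\mu)}$ in $H_{\mu,w}$ versus $t^{n(\sort(\lambda,\gamma))+|(\lambda|\gamma)|}$ in $\widetilde H_{(\lambda|\gamma)}$, plus the $j_{(\lambda|\gamma)}$ factor and the $1/(t^{-1}-1)$ plethysm). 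The main obstacle I expect is the $\dd_+$-compatibility step: it requires an exact match between the Pieri-type expansion coming from the geometry of $\PFH_{n+1,n-k}\to\PFH_{n,n-k}$ (the single-box-addition correspondence with its equivariant weights) and the combinatorially defined creation operation on the integral-form polynomials $J_{(\lambda|\gamma)}$, and getting the $q,t$-powers, the $t^{-1}$-twist, and the arm/leg normalizations $j_{(\lambda|\gamma)}$ to line up is where the real work lies. A secondary subtlety, flagged already in the introduction, is that passing from $\A_{t,q}$ on $W_\bullet$ (Ion--Wu) to $\B_{t,q}$ on $V_\bullet$ drops the generator $\dd_+^*$, so one must check that the recursion above only ever uses $\dd_-,\dd_+,\TT_i,\zz_i$ and never implicitly invokes $\dd_+^*$; this is what makes the $\B_{t,q}$-module structure (rather than the full $\A_{t,q}$-structure) sufficient to determine the $H_{\mu,w}$.
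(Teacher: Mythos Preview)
Your overall architecture---characterize both families by a $\B_{t,q}$-recursion anchored at $k=0$ via Haiman's theorem---is the paper's as well, but there is a genuine gap in the eigenvalue step and a real divergence in how the recursion is run.

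\textbf{The gap.} The joint $\zz_1,\dots,\zz_k$-spectrum on $U_k$ is \emph{not} simple, so your claim that matching $\zz$-eigenvalues ``pins down the image up to a scalar in each eigenspace'' fails as stated. For instance at $k=1$, $n=5$, the fixed points indexed by $((3,2),w)$ and $((3,1,1),w)$, with $w$ the $\T$-weight of the box $(1,3)$, have the same $\zz_1$-eigenvalue. The paper repairs this by passing to the \emph{extended} algebra $\B_{t,q}^{\mathrm{ext}}$ of Gonz\'alez--Gorsky--Simental and adjoining the operator $\Delta_{p_1}$ (geometrically coming from the tautological bundle): the joint $(\zz_1,\dots,\zz_k,\Delta_{p_1})$-spectrum \emph{is} simple, and the $\Delta_{p_1}$-eigenvalue $B_\mu(t,q)$ separates the different $\mu$. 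Establishing this requires matching the $\Delta_{p_1}$-action on $V_\bullet$ with the limit $\Delta$-operators on $\mathscr{P}_{as}^+$ through $\Xi_k$, which is its own piece of work.

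\textbf{The divergence, and a missing ingredient.} You propose to climb from $k=0$ via $\dd_+$ and correctly flag the $\dd_+$-compatibility (matching the geometric Pieri coefficients $d_{\nu,\mu}$ against the plethystic creation operator on the $J_{(\lambda|\gamma)}$) as the main obstacle. The paper avoids $\dd_+$ entirely: once simple spectrum yields $\Phi(H_{\mu,w})=\beta_{\mu,w}\widetilde H_{(\lambda|\gamma)}$, it runs the recursion \emph{down} via $\dd_-$, whose geometric action \eqref{dd-geom} is trivial, checking directly that $\dd_-\widetilde H_{(\lambda|\gamma)}=\widetilde H_{(\gamma_k+1,\lambda|\gamma')}$ whenever $\gamma_k+1\ge\lambda_1$ (a computation with the explicit normalization constants $j_{(\lambda|\gamma)}$), and then uses the $\TT_i$-intertwiners to propagate $\beta=1$ to arbitrary $\gamma$. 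The partially-symmetric Pieri formula is recovered as a \emph{consequence}, not used as input. Finally, a key ingredient you do not mention: the definition of $\widetilde H_{(\lambda|\gamma)}$ carries the twist $w_0^{(k)}T_{w_0^{(k)}}$ and the involution $(\,)^*$, and unwinding these so that $\widetilde H_{(\lambda|\gamma)}$ becomes an explicit scalar times $\Xi_k(\widetilde E_{(\gamma_1+1,\dots,\gamma_k+1|\lambda)})$---which is what lets one invoke Theorem~\ref{weight basis theorem} at all---requires the Concha--Lapointe identity relating $\mathrm{sh}(P_{(\lambda|\gamma)}^\dag)$ to $w_0 T_{w_0}P_{(\lambda|\gamma)}$. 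Corollary~\ref{cor:tEJ} alone only reaches $\mathcal{J}_{(\lambda|\gamma)}$, not $\widetilde H_{(\lambda|\gamma)}$.
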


In this paper, we confirm this conjecture:

\begin{thm-main}[Theorem~\ref{main thm}]
    Whenever $\phi(\mu,w) = (\lambda|\gamma)$, 
    $\Phi(H_{\mu,w})= \widetilde{H}_{(\lambda|\gamma)}.$
\end{thm-main}

The strategy for our proof of Theorem \ref{main thm} proceeds as follows:
\begin{itemize}
\item First, in Proposition \ref{prop:tEP} we bridge the two perspectives on partially symmetrized Macdonald polynomials showing that every partially symmetric Macdonald function $P_{(\lambda|\gamma)}$ of Goodberry \cite{goodberryarxiv} and Lapointe \cite{lapointe2022msymmetric} may be obtained via a transformation from a corresponding stable-limit nonsymmetric Macdonald function $\widetilde{E}_{(\mu|\nu)}.$ Then, using a remarkable identity \eqref{Concha-Lapointe identity} of Concha-Lapointe \cite{Concha_Lapointe}, we show in Proposition \ref{Calculations of eigenvectors} that applying Ion--Wu's isomorphism $\Xi$ to each $\widetilde{E}_{(\gamma_1+1,\ldots, \gamma_k+1|\lambda)}$ yields exactly $\widetilde{H}_{(\lambda|\gamma)}$ up to explicit non-zero scalars. 
\item Next, in Theorem \ref{prop: limit cherednik action explicit} we use the spectral theory of the limit Cherednik operators $\mathscr{Y}_i$ and an operator $\Delta_{p_1}$ to verify that each $\Phi(H_{\mu,w})$ must agree with $\widetilde{H}_{(\lambda|\gamma)}$ up to a non-zero scalar $\beta_{\mu,w}$. In particular, we use the fundamental fact that the polynomial representation of $\mathbb{B}_{t,q}^{\text{ext}}$ has simple joint $\Delta_{p_1},\zz_i$ spectrum \cite{gonzález2023calibrated} and the corresponding statement about the stable-limit DAHA polynomial representation \cite{MBWArxiv}. 
\item In order to show that these scalars $\beta_{\mu,w} = 1$, we verify in Lemma \ref{d- lemma} that the action of $\dd_{-}$ on both the geometric side, $H_{\mu,w}$, and the algebraic side, $\widetilde{H}_{(\lambda|\gamma)}$, agree. From here in Proposition \ref{scalars are all 1} we use as a base case for an induction argument the fact that, due to Carlsson--Gorsky--Mellit \cite{GCM_2017}, $\Phi(H_{\mu,\emptyset}) = \widetilde{H}_{\mu}$. This argument explicitly requires the exact normalization of $\widetilde{H}_{(\lambda|\gamma)}$ from \cite{GO} and the fact from \cite{GO} that the action of the Demazure-Lusztig operators $\mathsf{T}_i$ on both the $H_{\mu,w}$ and the $\widetilde{H}_{(\lambda|\gamma)}$ align up to the bijection $\phi(\mu,w) = (\lambda|\gamma)$. This directly implies Theorem \ref{main thm} verifying Conjecture \ref{main conj}.
\end{itemize}

\begin{example}
We have $\phi((2,1),(q))=(1|1)$ and
$$
\widetilde{H}_{(1|1)} = (t^2-q)e_1y_1+h_2+te_2\in\Lambda\otimes\K[y_1].
$$
where $e_i,h_i$ are the elementary and complete symmetric functions. One may directly compute that
$$
\dd_-\cdot\widetilde{H}_{(1|1)} = s_3+(q+t)s_{(2,1)}+qts_{(1,1,1)}=\widetilde{H}_{(2,1)}
$$
where $s_\la$ are the Schur functions and $\widetilde{H}_{(2,1)}$ is the modified Macdonald function.
\end{example}

\section{Aligning Conventions}\label{align conventions section}

In this section, we will align the conventions between the partially symmetric Macdonald functions of \cite{goodberryarxiv} and the stable-limit nonsymmetric Macdonald polynomials of \cite{MBWArxiv}.

\begin{lem}\label{k-twist up lemma}
For any $\lambda\in(\Z_{\ge 0})^m$ and $\gamma\in(\Z_{\ge 0})^k$, we have
\begin{align*}
&E_{(\gamma_1+1,\ldots,\gamma_k+1,\lambda_1,\ldots, \lambda_m)}(x_1,\ldots, x_k, x_{k+1},\ldots x_{k+m}) \\
&\qquad\qquad = q^{\gamma_1+\cdots + \gamma_k} x_1\cdots x_k E_{(\lambda_1,\ldots, \lambda_m,\gamma_1,\ldots, \gamma_k)}(x_{k+1},\ldots, x_{k+m}, q^{-1}x_1,\ldots, q^{-1}x_k).
\end{align*}
\end{lem}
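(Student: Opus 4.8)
The plan is to establish this as a "cycling" identity for nonsymmetric Macdonald polynomials of type $\mathrm{GL}$, using the known behavior of the $E_\nu$ under the affine Demazure-Lusztig/intertwining operators together with the explicit action of the "rotation" or "twist" operator. Recall that in the double affine Hecke algebra picture for $\mathrm{GL}_n$, there is an operator $\pi$ (a.k.a.\ $\omega$, the cyclic element) acting on $\K[x_1,\dotsc,x_n]$ by sending $x_i \mapsto x_{i+1}$ for $1 \le i < n$ and $x_n \mapsto q^{-1} x_1$ (matching the $\omega_m^{-1}$ in the paper's conventions up to inversion), and it cyclically permutes the $E_\nu$: one has $E_{\nu}(x) \propto x_1^{\nu_n} \cdot (\text{cycle of } E_{(\nu_n,\nu_1,\dotsc,\nu_{n-1})})$, i.e.\ the spectral vector gets rotated. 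The statement to be proved is exactly the $k$-fold iterate of this one-step cycling, applied to the composition $(\gamma_1+1,\dotsc,\gamma_k+1,\lambda_1,\dotsc,\lambda_m)$, whose first $k$ entries are the ones being cycled to the back.

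Concretely, first I would set $n = k+m$ and recall the single-step recursion: for a composition $\nu \in \Z_{\ge 0}^n$ with $\nu_1 = a$,
\[
E_{\nu}(x_1,\dotsc,x_n;q,t) = q^{?}\, x_n^{?}\cdots
\]
— more precisely, the clean statement is that $\Psi := x_n \circ (\text{the cyclic shift } x_i \mapsto x_{i-1}, x_1 \mapsto q x_n)$ sends $E_{(\nu_2,\dotsc,\nu_n,\nu_1-1)}$ to $q^{\nu_1 - 1}$ (or similar normalization) times $E_{\nu}$ when $\nu_1 \ge 1$. I would pin down the exact normalization by comparing the Cherednik $Y$-eigenvalues on both sides (the spectral vector of $E_\nu$ is determined by $\nu$ and its permutation-to-sort data), since the transformation $x_i \mapsto x_{i+1}$, $x_n \mapsto q^{-1}x_1$ intertwines the $Y_i$'s in a way that rotates eigenvalues and multiplies one of them by a power of $q$. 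Then I would iterate this $k$ times: each application peels off one of the entries $\gamma_j + 1 \ge 1$ from the front, subtracts $1$ to turn it into $\gamma_j$, moves it to the back, and contributes a factor $q^{\gamma_j}$ and a variable. After $k$ steps, the front block $(\gamma_1+1,\dotsc,\gamma_k+1)$ has become the back block $(\gamma_1,\dotsc,\gamma_k)$, the total $q$-power is $q^{\gamma_1+\dotsb+\gamma_k}$, the accumulated monomial is $x_1\cdots x_k$ (the variables that were cycled through the "$x_n$" slot), and the variables have been relabeled so that $E_{(\lambda,\gamma)}$ is evaluated at $(x_{k+1},\dotsc,x_{k+m},q^{-1}x_1,\dotsc,q^{-1}x_k)$, exactly as claimed. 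I should be careful that at each intermediate step the entry being cycled is genuinely $\ge 1$ (it is, since each $\gamma_j+1 \ge 1$), so the single-step identity applies without degenerate cases.

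The main obstacle I anticipate is bookkeeping: getting the $q$-powers, the variable shifts, and the monomial prefactor all consistent across $k$ iterations, and in particular reconciling the paper's specific normalization of the $E_\nu$ (that of Haglund--Haiman--Loehr \cite{haglund2007combinatorial}) and of $\omega_m$ with the form stated in the lemma. I would handle this by either (i) citing the precise one-step cycling identity in the form already present in the literature (e.g.\ in work on nonsymmetric Macdonald polynomials via affine Hecke algebras, where $E_{\nu} = (\text{intertwiner}) E_{\nu'}$) and carefully tracking the normalization constant through one step, then invoking induction on $k$; or (ii) verifying the combined $k$-step identity directly by checking that both sides are eigenfunctions of the same Cherednik operators $Y_1^{(n)},\dotsc,Y_n^{(n)}$ (transported through the stated change of variables) with the same eigenvalues, and agree in top monomial — using triangularity of $E_\nu$ with respect to the HHL partial order — which determines them up to scalar, and then fixing the scalar by looking at a single coefficient (e.g.\ the coefficient of $x_1^{\gamma_1+1}\cdots x_k^{\gamma_k+1} x_{k+1}^{\lambda_1}\cdots$). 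Approach (ii) is cleaner conceptually and sidesteps delicate sign/normalization issues in iterating; I would lead with that, falling back to the explicit induction only if the eigenvalue comparison needs the one-step identity as a lemma anyway. Either way, no deep input is needed beyond the standard DAHA-theoretic properties of the $E_\nu$; this is a "follow your nose" verification once the right invariant (the $Y$-spectrum) is identified.
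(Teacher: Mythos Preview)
Your proposal is correct, and your approach (i)---iterate the one-step cycling identity $k$ times---is exactly what the paper does: it simply cites the Knop--Sahi relation
\[
E_{(\mu_n+1,\mu_1,\ldots,\mu_{n-1})}(x_1,\ldots,x_n) = q^{\mu_n} x_1\, E_{\mu}(x_2,\ldots,x_n,q^{-1}x_1)
\]
in the Haglund--Haiman--Loehr conventions and applies it $k$ times, one line of proof. Your preferred approach (ii), comparing $Y$-eigenvalues and matching the leading monomial, is also valid but considerably more work than necessary here; since the single-step Knop--Sahi identity is standard and already normalized correctly in the HHL conventions, the direct induction handles all the bookkeeping you were worried about without any eigenvalue chasing.
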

\begin{proof}
    Repeatedly apply the Knop-Sahi relation (in the conventions of \cite{haglund2007combinatorial})
    $$E_{(\mu_n+1,\mu_1,\ldots, \mu_{n-1})}(x_1,\ldots, x_n) 
    = q^{\mu_n}x_1E_{\mu}(x_2,\ldots,x_{n}, q^{-1}x_1)$$
    which holds for any $\mu\in(\Z_{\ge 0})^n$.
\end{proof}

\begin{prop}\label{prop:tEP}
For any $\lambda\in\Y$ and $\gamma\in(\Z_{\ge 0})^k$,
    \begin{align*}
    &\prod_{i}\prod_{j=1}^{m_i(\lambda)}( 1-t^{j})^{-1} \widetilde{E}_{(\gamma_1+1,\ldots, \gamma_k+1|\lambda)}(qx_1,\ldots,qx_k,x_{k+1},x_{k+2},\ldots;q^{-1},t) \\
    &\qquad\qquad =q^{\gamma_1+\cdots + \gamma_k}x_1\cdots x_k P_{(\lambda|\gamma)}(x_{k+1}+\cdots| x_1,\ldots,x_k).
    \end{align*}
\end{prop}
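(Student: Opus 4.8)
The plan is to deduce Proposition~\ref{prop:tEP} by comparing finite-rank identities for the relevant Macdonald polynomials and then passing to the stable limit. First, I would take the defining expression for $\widetilde{E}_{(\gamma_1+1,\dots,\gamma_k+1|\lambda)}$ from Definition~\ref{stable-limit non-sym MacD function defn}: it is the $t$-adic stable limit of $\epsilon^{(n)}_k$ applied to $E_{(\gamma_1+1,\dots,\gamma_k+1)*\lambda*0^{\cdots}}(x_1,\dots,x_n;q^{-1},t)$. Substituting $x_i \mapsto qx_i$ for $1 \le i \le k$ and specializing the parameter convention, I would apply Lemma~\ref{k-twist up lemma} (with the $q \leftrightarrow q^{-1}$ swap built into the convention of $\widetilde{E}$) to rewrite
$$
E_{(\gamma_1+1,\dots,\gamma_k+1,\lambda_1,\dots,\lambda_m)}(qx_1,\dots,qx_k,x_{k+1},\dots,x_{k+m})
= q^{\gamma_1+\dots+\gamma_k}\, (qx_1)\cdots(qx_k)\cdot q^{-k} E_{(\lambda,\gamma)}(x_{k+1},\dots,x_{k+m},x_1,\dots,x_k),
$$
the point being that the twist-up lemma turns the nonsymmetric polynomial with the large entries $\gamma_i+1$ in front into $x_1\cdots x_k$ times a nonsymmetric polynomial with $\lambda$ first and $\gamma$ last, now evaluated at $q^{-1}(qx_i)=x_i$ in the last $k$ slots.

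Next I would analyze the effect of the partial symmetrizer $\epsilon^{(n)}_k$. In the $\widetilde{E}$ construction, $\epsilon^{(n)}_k$ symmetrizes over the \emph{last} $n-k$ variables (the $\fS_{(1^k,n-k)}$ block, after reindexing), which in the rewritten expression are exactly the variables $x_{k+1},\dots,x_n$ carrying the partition $\lambda$. Comparing with the definition $P_{(\lambda|\gamma)} = P^{+}_m E_{(\lambda|\gamma)}/S^{(m)}_\lambda(t)$, I would match $\epsilon^{(n)}_k$ (built from the $\TT_w$, normalized by $1/[n-k]_t!$ with the $t^{\binom{n-k}{2}-\ell(w)}$ weights) against $P^{+}_m = \sum_{w\in\fS_m} T_w$ using the relation $T_i = t\TT_i^{-1}$ and the identity $S^{(m)}_\lambda(t) = [m-\ell(\lambda)]_t! \prod_i [m_i(\lambda)]_t!$. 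The factor $\prod_i\prod_{j=1}^{m_i(\lambda)}(1-t^j)^{-1}$ on the left of the proposition is, up to a power of $(1-t)$, exactly $\prod_i [m_i(\lambda)]_t!^{-1}$, which is the part of $1/S^{(m)}_\lambda(t)$ that survives the $m\to\infty$ limit (the $[m-\ell(\lambda)]_t!$ piece gets absorbed into the normalization of $\epsilon^{(n)}_k$ and the passage to symmetric functions in infinitely many variables). So the bookkeeping is: twist-up lemma handles the leading variables, the symmetrizer comparison handles the $\lambda$-block, and the scalar prefactor on the left is precisely the residual combinatorial normalization.

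The main obstacle, I expect, is reconciling the three different normalizations and parameter conventions simultaneously: (i) the $q \mapsto q^{-1}$ appearing inside $\widetilde{E}$, (ii) the discrepancy between $T_w$-symmetrization (as in $P^+_m$) and $\TT_w$-symmetrization with the $t$-power weights (as in $\epsilon^{(n)}_k$), and (iii) the difference between a finite-rank identity and its stable limit, including checking that the limit on the left actually exists in $\mathscr{P}_{as}^+$ and matches $x_1\cdots x_k P_{(\lambda|\gamma)}(x_{k+1}+\cdots|x_1,\dots,x_k) \in V_k$ after the identification of $\mathscr{P}_{as}^+$ with $\varinjlim \K[x_1,\dots,x_k]\otimes\Lambda$. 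To control (iii) I would invoke the stability result quoted after the definition of $P_{(\lambda|\gamma)}(X|y)$, namely $P_{(\lambda,0|\gamma)}(\dots,0,\dots) = P_{(\lambda|\gamma)}(\dots)$, to guarantee the limits are compatible, and I would carefully track that the $\epsilon^{(n)}_k$ normalization $\frac{1}{[n-k]_t!}$ exactly cancels the growth of $[m-\ell(\lambda)]_t!$ so that only $\prod_i[m_i(\lambda)]_t!$ remains. Once the conventions are aligned, the identity should follow by taking $m\to\infty$ (equivalently $n\to\infty$) on both sides of the finite-rank statement obtained by combining Lemma~\ref{k-twist up lemma} with the symmetrizer comparison.
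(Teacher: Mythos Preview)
Your proposal is correct and follows essentially the same route as the paper: apply Lemma~\ref{k-twist up lemma}, compare the symmetrizers $\epsilon_k^{(n)}$ and $P_m^+$ via $T_i=t\TT_i^{-1}$ together with $S_\lambda^{(m)}(t)=[m-\ell(\lambda)]_t!\,S_\lambda(t)$, and pass to the stable limit. The only organizational difference is that the paper performs the substitution $x_i\mapsto qx_i$ for $1\le i\le k$ at the very end rather than at the start; doing it last keeps the $q$-bookkeeping cleaner and avoids the slip in your displayed equation (your factor $(qx_1)\cdots(qx_k)\cdot q^{-k}$ should be $(qx_1)\cdots(qx_k)=q^kx_1\cdots x_k$, and the stray $q^k$ is then absorbed by the $q\leftrightarrow q^{-1}$ convention inside $\widetilde{E}$).
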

\begin{proof}
    Choose any $m$ such that $\la\in\Y_m$. Starting from Lemma \ref{k-twist up lemma}, we apply the normalized symmetrizer $\frac{P_{[k+1,\ldots,k+m]}^{+}}{S_{\lambda}^{(m)}(t)}$ to both sides and then take the limit as $m\to\infty$.
    
    On the left-hand side, we obtain
    \begin{align*}
        &\frac{P_{[k+1,\ldots,k+m]}^{+}}{S_{\lambda}^{(m)}(t)}E_{(\gamma_1+1,\ldots,\gamma_k+1,\lambda_1,\ldots, \lambda_m)}(x_1,\ldots, x_k, x_{k+1},\ldots x_{k+m}) \\
        &\qquad\qquad = \frac{[m]_{t}!}{S_{\lambda}(t) [m-\ell(\lambda)]_{t}!} \epsilon^{(m+k)}_{k}(E_{(\gamma_1+1,\ldots,\gamma_k+1,\lambda_1,\ldots, \lambda_m)}(x_1,\ldots, x_k, x_{k+1},\ldots x_{k+m})),
    \end{align*}
    which limits to 
    \begin{align*} 
    &\frac{1}{(1-t)^{\ell(\lambda)}S_{\lambda}(t)} \widetilde{E}_{(\gamma_1+1,\ldots, \gamma_k+1|\lambda)}(x_1,\ldots,x_k,x_{k+1},x_{k+2},\ldots;q^{-1},t) \\
    &\qquad\qquad = \prod_{i\ge 1}\prod_{j=1}^{m_i(\lambda)}( 1-t^{j})^{-1} \widetilde{E}_{(\gamma_1+1\ldots, \gamma_k+1|\lambda)}(x_1,\ldots,x_k,x_{k+1},x_{k+2},\ldots;q^{-1},t).
    \end{align*}

On the other hand,
    \begin{align*}
       &\frac{P_{[k+1,\ldots,k+m]}^{+}}{S_{\lambda}^{(m)}(t)}(q^{\gamma_1+\ldots + \gamma_k} x_1\cdots x_k E_{(\lambda_1,\ldots, \lambda_m,\gamma_1,\ldots, \gamma_k)}(x_{k+1},\ldots, x_{k+m}, q^{-1}x_1,\ldots, q^{-1}x_k)) \\
       &\qquad\qquad = q^{\gamma_1+\ldots + \gamma_k} x_1\cdots x_k P_{(\lambda|\gamma)}(x_{k+1},\ldots, x_{k+m}|q^{-1}x_1,\ldots, q^{-1}x_k),
    \end{align*}
    which limits to 
    $q^{\gamma_1+\ldots + \gamma_k} x_1\cdots x_kP_{(\lambda|\gamma)}(x_{k+1}+\ldots|q^{-1}x_1,\ldots, q^{-1}x_k).$

    By substituting $x_i \mapsto qx_i$ for $1 \leq i \leq k$, we obtain the result.
\end{proof}

\begin{defn}
    For $k \geq 0$, let $\mathfrak{X}_k := x_{k+1}+x_{k+2}+\ldots.$ Define the map 
    $$ \Xi_k :  \K[x_1,\ldots,x_k]^+\otimes \Lambda[\mathfrak{X}_k] \longrightarrow V_k,$$ 
    where $\K[x_1,\ldots,x_k]^+=x_1\cdots x_k \K[x_1,\ldots,x_k]$, by the formula
    $$\Xi_{k}(x_1^{a_1+1}\cdots x_{k}^{a_k+1}F[\mathfrak{X}_k]) = y_1^{a_1}\cdots y_k^{a_k}F\left(\frac{X}{t-1}\right).$$
\end{defn}

\begin{cor}\label{cor:tEJ}
For any $\la\in\Y$ and $\gamma\in(\Z_{\ge 0})^k$, we have
    $$\mathcal{J}_{(\lambda|\gamma)}\left(\frac{X}{t-1}\Big| y_1,\ldots,y_k \right) = \frac{j_{(\lambda|\gamma)}q^{-(\gamma_1+\cdots + \gamma_k)}}{\prod_{i\ge 1}\prod_{j=1}^{m_i(\lambda)}( 1-t^{j})}\, \sh\circ \Xi_k \left( \widetilde{E}_{(\gamma_1+1,\ldots \gamma_k+1|\lambda)}(x_1,\ldots ;q^{-1},t) \right)
    $$
where $\sh : V_k \to V_k$ is the map
$$ \mathrm{sh}\big(f(X\mid y)\big)=f(X\mid qy_1,\dotsc,qy_k). $$
\end{cor}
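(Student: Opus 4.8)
The plan is to read the corollary off from Proposition~\ref{prop:tEP} by a formal manipulation, the heart of which is the following compatibility: for every $g(X\mid y)\in V_k$,
$$\sh\circ\Xi_k\bigl(x_1\cdots x_k\cdot g(\mathfrak{X}_k\mid q^{-1}x_1,\dots,q^{-1}x_k)\bigr)=g\bigl(X/(t-1)\mid y\bigr).$$
By $\K$-linearity it is enough to verify this for $g=y^{a}Q[X]$ with $a\in\Z_{\ge 0}^{k}$ and $Q\in\Lambda$: then $x_1\cdots x_k\cdot g(\mathfrak{X}_k\mid q^{-1}x_1,\dots,q^{-1}x_k)=q^{-|a|}x_1^{a_1+1}\cdots x_k^{a_k+1}Q[\mathfrak{X}_k]$, so applying $\Xi_k$ and then $\sh$ produces $q^{-|a|}q^{|a|}y^{a}Q[X/(t-1)]=y^{a}Q[X/(t-1)]$, and the plethystic substitution $X\mapsto X/(t-1)$ affects only the $\Lambda$-factor, so it is carried along unchanged.

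Before applying this, one must check that $\widetilde{E}_{(\gamma_1+1,\dots,\gamma_k+1|\lambda)}(x_1,x_2,\dots;q^{-1},t)$ lies in the domain $\K[x_1,\dots,x_k]^{+}\otimes\Lambda[\mathfrak{X}_k]$ of $\Xi_k$. Symmetry in $x_{k+1},x_{k+2},\dots$ holds because the operators $\epsilon_k^{(n)}$ of Definition~\ref{stable-limit non-sym MacD function defn} symmetrize precisely $x_{k+1},\dots,x_n$; divisibility by $x_1\cdots x_k$ holds because Lemma~\ref{k-twist up lemma} exhibits an explicit factor $x_1\cdots x_k$ in every finite-rank $E_{(\gamma_1+1,\dots,\gamma_k+1,\lambda_1,\dots,\lambda_m)}$, a factor that $\epsilon_k^{(n)}$ (acting only on $x_{k+1},\dots,x_n$) leaves alone and that therefore survives the stable limit.

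Now combine the two. Proposition~\ref{prop:tEP}, in the form its proof establishes just before the substitution $x_i\mapsto qx_i$, states
$$\widetilde{E}_{(\gamma_1+1,\dots,\gamma_k+1|\lambda)}(x_1,x_2,\dots;q^{-1},t)=q^{\gamma_1+\dots+\gamma_k}\Bigl(\prod_{i\ge 1}\prod_{j=1}^{m_i(\lambda)}(1-t^{j})\Bigr)\,x_1\cdots x_k\,P_{(\lambda|\gamma)}\bigl(\mathfrak{X}_k\mid q^{-1}x_1,\dots,q^{-1}x_k\bigr).$$
Applying $\sh\circ\Xi_k$ to both sides and using the compatibility identity with $g=P_{(\lambda|\gamma)}$ yields
$$\sh\circ\Xi_k\bigl(\widetilde{E}_{(\gamma_1+1,\dots,\gamma_k+1|\lambda)}(x;q^{-1},t)\bigr)=q^{\gamma_1+\dots+\gamma_k}\Bigl(\prod_{i\ge 1}\prod_{j=1}^{m_i(\lambda)}(1-t^{j})\Bigr)P_{(\lambda|\gamma)}\bigl(X/(t-1)\mid y\bigr).$$
Solving for $P_{(\lambda|\gamma)}(X/(t-1)\mid y)$, multiplying by $j_{(\lambda|\gamma)}$, and using $\mathcal{J}_{(\lambda|\gamma)}=j_{(\lambda|\gamma)}P_{(\lambda|\gamma)}$ (with $j_{(\lambda|\gamma)}$ as in Definition~\ref{def:j}) gives the assertion.

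The step demanding care is the bookkeeping of powers of $q$: the $q^{-1}$-scaling of the $y$-arguments inherited from Lemma~\ref{k-twist up lemma}, the $q$-scaling built into $\sh$, and the prefactor $q^{\gamma_1+\dots+\gamma_k}$ must combine to exactly the $q^{-(\gamma_1+\dots+\gamma_k)}$ in the statement — and, crucially, the monomial-degree-dependent powers $q^{\pm|a|}$ must cancel, which is what makes the compatibility identity (hence the right-hand side of the corollary) well defined, independent of the chosen monomial expansion of $P_{(\lambda|\gamma)}$. One should also note that the classical stable limit commutes with symmetrization and with replacing finitely many symmetric variables by $\mathfrak{X}_k$ (the stability statement recalled in the discussion of $P_{(\lambda|\gamma)}(X\mid y)$), so the expansion of $\widetilde{E}_{(\gamma_1+1,\dots,\gamma_k+1|\lambda)}(x;q^{-1},t)$ may indeed be read off from that of $P_{(\lambda|\gamma)}(X\mid y)$.
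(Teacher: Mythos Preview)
Your proof is correct and follows essentially the same route as the paper's: both arguments amount to unwinding Proposition~\ref{prop:tEP} through the definition of $\Xi_k$ and then absorbing the $q$-scaling of the first $k$ variables into the operator $\sh$. You organize the bookkeeping a bit differently---isolating the compatibility identity $\sh\circ\Xi_k(x_1\cdots x_k\,g(\mathfrak{X}_k\mid q^{-1}x))=g(X/(t-1)\mid y)$ and invoking the pre-substitution form of Proposition~\ref{prop:tEP}---whereas the paper works directly from the stated (post-substitution) form and pulls out the $q$-shift at the very end, but the content is the same. Your explicit check that $\widetilde{E}_{(\gamma_1+1,\dots,\gamma_k+1|\lambda)}(x;q^{-1},t)$ lies in the domain $\K[x_1,\dots,x_k]^+\otimes\Lambda[\mathfrak{X}_k]$ is a nice touch that the paper leaves implicit.
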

\begin{proof}
We compute using Proposition~\ref{prop:tEP} and the definition of $\Xi_k$ that
    \begin{align*}
        &\mathcal{J}_{(\lambda|\gamma)}\left(\frac{X}{t-1}\Big| y\right)\\
        &= \Xi_k \left( x_1\cdots x_k \mathcal{J}_{(\lambda|\gamma)}\left(x_{k+1}+\cdots\Big| x_1,\ldots,x_k\right) \right) \\
        &= \Xi_k \left( j_{(\lambda|\gamma)}x_1\cdots x_k P_{(\lambda|\gamma)}\left(x_{k+1}+\cdots\Big| x_1,\ldots,x_k\right) \right)\\
        &= \Xi_k \left( \frac{j_{(\lambda|\gamma)}q^{-(\gamma_1+\cdots+\gamma_k )}}{\prod_{i\ge 1}\prod_{j=1}^{m_i(\lambda)}( 1-t^{j})}\widetilde{E}_{(\gamma_1+1,\ldots, \gamma_k+1|\lambda)}(qx_1,\ldots, qx_k, x_{k+1},\ldots;q^{-1},t) \right) \\
        &=  \frac{j_{(\lambda|\gamma)}q^{-(\gamma_1+\cdots+\gamma_k )}}{\prod_{i\ge 1}\prod_{j=1}^{m_i(\lambda)}( 1-t^{j})}\, \Xi_k \left(\widetilde{E}_{(\gamma_1+1,\ldots, \gamma_k+1|\lambda)}(qx_1,\ldots, qx_k, x_{k+1},\ldots;q^{-1},t) \right)\\
        &= \frac{j_{(\lambda|\gamma)}q^{-(\gamma_1+\cdots+\gamma_k )}}{\prod_{i\ge 1}\prod_{j=1}^{m_i(\lambda)}( 1-t^{j})
        }\,\sh\circ\Xi_k \left(\widetilde{E}_{(\gamma_1+1,\ldots, \gamma_k+1|\lambda)}(x_1,\ldots;q^{-1},t) \right). 
        \qedhere
    \end{align*}
\end{proof}

The following identity of Concha--Lapointe is crucial in connecting \cite{GO} with \cite{MBWArxiv}.

\begin{thm}[{\cite{Concha_Lapointe}}]\label{Concha-Lapointe identity}
For any partition $\lambda$ and $\gamma\in(\Z_{\ge 0})^k$, we have
$$\mathrm{sh}\big(P_{(\lambda|\gamma)}(X\mid y)^\dag\big) = q^{\gamma_1+\dotsm+\gamma_k}t^{\mathrm{inv}(\gamma)-\ell(w_0^{(k)})}w_0^{(k)}T_{w_0^{(k)}}P_{(\lambda|\gamma)}(X\mid y)$$
where $\dag$ is the involution on coefficients sending $q$ and $t$ to their inverses, and
$$\mathrm{inv}(\gamma)=|\{i<j : \gamma_i>\gamma_j\}|.$$
\end{thm}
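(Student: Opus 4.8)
The plan is to reduce the identity to a parameter-inversion duality for ordinary (nonsymmetric) Macdonald polynomials of type $\mathrm{GL}_n$ and then transport that duality through the partial symmetrizer. Since $P_{(\la|\ga)}(X\mid y)=\lim_{m\to\infty}P_{(\la|\ga)}(x_1,\dotsc,x_{m+k};q,t)$ under the identification $x_{m+i}\mapsto y_i$, and since $\mathrm{sh}$, $\dag$, and $w_0^{(k)}T_{w_0^{(k)}}$ all act only on the variables $y_1,\dotsc,y_k$, all three commute with the stable limit. It therefore suffices to prove, for each sufficiently large $m$ and $n=m+k$, the finite-rank identity
\[
\mathrm{sh}\bigl(P_{(\la|\ga)}(x_1,\dotsc,x_n;q^{-1},t^{-1})\bigr)=q^{|\ga|}t^{\mathrm{inv}(\ga)-\ell(w_0^{(k)})}\,w_0^{(k)}T_{w_0^{(k)}}\,P_{(\la|\ga)}(x_1,\dotsc,x_n;q,t),
\]
where $\mathrm{sh}$ now rescales $x_{m+1},\dotsc,x_{m+k}$ by $q$ and $w_0^{(k)},T_{w_0^{(k)}}$ act on those same variables.

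I would first treat the nonsymmetric core, namely the case $\la=\varnothing$, where $P_{(\varnothing|\ga)}=E_\ga$ in the $k$ variables $y$ and the identity reads $\mathrm{sh}(E_\ga(y;q^{-1},t^{-1}))=q^{|\ga|}t^{\mathrm{inv}(\ga)-\ell(w_0^{(k)})}\,w_0^{(k)}T_{w_0^{(k)}}E_\ga(y;q,t)$. Both sides share the leading monomial $y^\ga$ for the dominance-type order making $E_\ga$ triangular: on the left $\mathrm{sh}$ multiplies its coefficient by $q^{|\ga|}$, and on the right an induction on $\ell(w_0^{(k)})$ using the action of a single Demazure--Lusztig operator $T_i$ on a monomial shows that $w_0^{(k)}T_{w_0^{(k)}}$ sends $y^\ga$ to $y^\ga$ with coefficient $t^{\ell(w_0^{(k)})-\mathrm{inv}(\ga)}$, exactly cancelling the prefactor. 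To upgrade agreement of leading terms to equality I would invoke uniqueness of normalized Cherednik eigenfunctions: a short computation with the definitions shows that $\mathrm{sh}$ intertwines the $q^{-1}$-Cherednik operators with the operators obtained by conjugating the $q$-Cherednik operators by $w_0^{(k)}T_{w_0^{(k)}}$ (this is the usual behavior of the $Y_i$ under the longest element and under the cyclic shift in the double affine Hecke algebra of $\mathrm{GL}_k$), so the two sides lie in the same one-dimensional common eigenspace and hence coincide. This $\la=\varnothing$ statement is very likely already in the Macdonald--Cherednik literature, and can in any case be read off from the Haglund--Haiman--Loehr combinatorial formula; the content we actually need is the explicit scalar extracted above.

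The general case then follows by symmetrizing the first $m$ variables. Writing $P_{(\la|\ga)}=P_m^+E_{(\la|\ga)}/S_\la^{(m)}(t)$ with $P_m^+=\sum_{w\in\mathfrak{S}_m}T_w$ involving only $x_1,\dotsc,x_m$, the operator $P_m^+$ commutes with $\mathrm{sh}$ and with $w_0^{(k)}T_{w_0^{(k)}}$; the one wrinkle is that $\dag$ replaces $P_m^+$ by the same operator with $t\mapsto t^{-1}$. One proves the fully nonsymmetric duality for $E_{(\la|\ga)}$ with $w_0^{(n)}T_{w_0^{(n)}}$ in place of $w_0^{(k)}T_{w_0^{(k)}}$ exactly as in the previous paragraph, uses the parabolic factorization $w_0^{(n)}=w_0^{(m,k)}\,(w_0^{(m)}\times w_0^{(k)})$ to separate the actions on the two blocks of variables, notes that the $w_0^{(m)}$-block is absorbed into the $\mathfrak{S}_m$-symmetrization up to a $t$-power, and combines this with the fact that $\tfrac{1}{S_\la^{(m)}(t^{-1})}P_m^+\big|_{t\mapsto t^{-1}}$ and $\tfrac{1}{S_\la^{(m)}(t)}P_m^+$ induce the same normalized projection onto $\mathfrak{S}_m$-symmetric functions up to an overall scalar absorbed by the normalization of $P_{(\la|\ga)}$. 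Reassembling and letting $m\to\infty$ yields the theorem. The step I expect to be the main obstacle is entirely in this scalar bookkeeping: pinning down the nonsymmetric duality with exactly the stated normalization, and then checking that the block-reversal $w_0^{(m,k)}$ contributes no power of $t$ beyond what is already recorded in $t^{\mathrm{inv}(\ga)-\ell(w_0^{(k)})}$ and that the $t$- versus $t^{-1}$-Hall--Littlewood symmetrizers, once divided by $S_\la^{(m)}(t)$ respectively $S_\la^{(m)}(t^{-1})$, really do agree after normalization. It is precisely here that the exact normalization of $P_{(\la|\ga)}$ from \cite{goodberryarxiv} must be used.
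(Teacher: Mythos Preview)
The paper does not prove this theorem; it is quoted from \cite{Concha_Lapointe} and used as an input to Proposition~\ref{Calculations of eigenvectors}. So there is no paper proof to compare against, and the question is only whether your sketch stands on its own.

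Your outline has a concrete error and a genuine gap. The error is in the ``nonsymmetric core'' case: you assert that $P_{(\varnothing|\ga)}=E_\ga(y)$, but this is false---$P_{(\varnothing|\ga)}(X\mid y)$ depends nontrivially on $X$, as the paper's own example $P_{(\varnothing|1,0)}=\tfrac{1-t}{1-qt^2}e_1(X)+y_1$ shows. At finite $m$, $P_{(\varnothing|\ga)}$ is the $\mathfrak S_m$-symmetrization of $E_{(0^m,\ga)}$, not $E_\ga$ in $k$ variables. That paragraph therefore establishes nothing; what you actually need (and later invoke) is the fully nonsymmetric $\mathrm{GL}_n$ duality for $E_{(\la,\ga)}$, which you only sketch.

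The genuine gap is exactly where you locate it, and two of your claimed simplifications do not hold as stated. First, the normalized symmetrizers $\tfrac{1}{S_\la^{(m)}(t)}P_m^+$ and its $t\mapsto t^{-1}$ counterpart are \emph{not} equal as operators: both have image the $\mathfrak S_m$-symmetric polynomials, but their kernels differ (already for $m=2$ the $(-1)$-eigenspace of $T_1$ at parameter $t$ is spanned by $x_2-tx_1$, not by $x_2-t^{-1}x_1$), so applying them to $E_{(\la,\ga)}$ versus $E_{(\la,\ga)}^\dag$ need not give proportional outputs without a separate argument. Second, in a fully nonsymmetric $\mathrm{GL}_n$ duality the shift $\mathrm{sh}$ would have to rescale all $n$ variables, whereas in the target identity it rescales only the last $k$; reconciling this with the action of the block-reversal $T_{w_0^{(m,k)}}$, which moves the $\ga$-variables across the symmetric block, is real work and not mere bookkeeping. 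In short, the strategy is reasonable, but everything you flagged as ``the main obstacle'' is indeed the entire content of the proof, and none of it is carried out here.
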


\begin{example}
For $(\la|\ga)=(\varnothing|1,0)$, we have
\begin{align*}
    P_{(\varnothing|1,0)} &= \frac{1-t}{1-qt^2}e_1(X) + y_1\\ 
    \mathrm{sh}\big((P_{(\varnothing|1,0)})^\dag\big) &= \frac{1-t^{-1}}{1-q^{-1}t^{-2}}e_1(X) + qy_1\\
    &= \frac{t^{-1}}{q^{-1}t^{-2}}\frac{1-t}{1-qt^{2}}e_1(X) + qy_1\\
    &= qt\frac{1-t}{1-qt^{2}}e_1(X) + qy_1\\
    q^1 t^{1-1}w_0^{(2)}T_{w_0^{(2)}}P_{(\varnothing|1,0)} &= qt\frac{1-t}{1-qt^2}e_1(X) + qy_1.   
\end{align*}
\end{example}

Now we complete our match between \cite{GO} and \cite{MBWArxiv}.

\begin{prop}\label{Calculations of eigenvectors}
For any $\lambda\in\Y$ and $\gamma\in(\Z_{\ge 0})^k$, we have
\begin{equation}\label{eq eigenvectors}
    \Xi_{k}(\widetilde{E}_{(\gamma_1+1,\ldots, \gamma_k+1|\lambda)}) = \frac{\prod_{i\ge 1}\prod_{j=1}^{m_i(\lambda)}( 1-t^{j})}{j_{(\lambda|\gamma)}(q,t^{-1})t^{\mathrm{inv}(\gamma)-\ell(w_0^{(k)})}} \left(w_0^{(k)} T_{w_0^{(k)}} \mathcal{J}_{(\lambda|\gamma)}\left( \frac{X}{t^{-1}-1} \Big| y_1,\ldots , y_k \right) \right)^{*}.
\end{equation}

\end{prop}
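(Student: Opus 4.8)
The strategy is to combine the two previously established bridges---Corollary~\ref{cor:tEJ} on one side and the Concha--Lapointe identity (Theorem~\ref{Concha-Lapointe identity}) on the other---and to match them up through a careful bookkeeping of the auxiliary scalars and the specialization $t\mapsto t^{-1}$. First I would invoke Corollary~\ref{cor:tEJ} to express $\mathcal{J}_{(\lambda|\gamma)}\!\left(\frac{X}{t-1}\,\big|\,y\right)$ in terms of $\sh\circ\Xi_k\big(\widetilde{E}_{(\gamma_1+1,\ldots,\gamma_k+1|\lambda)}\big)$, and then solve this relation for $\Xi_k(\widetilde{E}_{(\gamma_1+1,\ldots,\gamma_k+1|\lambda)})$, using that $\sh$ is invertible (with $\sh^{-1}$ replacing $y$ by $q^{-1}y$). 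This already isolates $\Xi_k(\widetilde{E})$ up to an explicit scalar, but in terms of $\sh^{-1}\mathcal{J}_{(\lambda|\gamma)}\!\left(\frac{X}{t-1}\,\big|\,y\right)$ rather than the quantity appearing on the right-hand side of \eqref{eq eigenvectors}.

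\textbf{Converting to the target form.} The right-hand side of \eqref{eq eigenvectors} involves $\big(w_0^{(k)}T_{w_0^{(k)}}\mathcal{J}_{(\lambda|\gamma)}(\tfrac{X}{t^{-1}-1}\mid y)\big)^{*}$, where $*$ sends $t\mapsto t^{-1}$ and acts on the $y$-variables by $w_0^{(k)}T_{w_0^{(k)}}$. The key is to relate $\sh^{-1}\mathcal{J}_{(\lambda|\gamma)}(\tfrac{X}{t-1}\mid y)$ to this. I would apply the Concha--Lapointe identity, which relates $\sh\big(P_{(\lambda|\gamma)}^{\dag}\big)$ to $w_0^{(k)}T_{w_0^{(k)}}P_{(\lambda|\gamma)}$; using $\mathcal{J}_{(\lambda|\gamma)}=j_{(\lambda|\gamma)}P_{(\lambda|\gamma)}$ and tracking how $j_{(\lambda|\gamma)}$ transforms under $\dag$ (it becomes $j_{(\lambda|\gamma)}(q^{-1},t^{-1})$, which differs from $j_{(\lambda|\gamma)}(q,t^{-1})$ by a monomial in $q,t$ computable from the arm/leg statistics), one converts the $\dag$-involution on $P$ into the $*$-operation on $\mathcal{J}$ together with explicit $q$- and $t$-powers. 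One must also reconcile the plethystic arguments $\tfrac{X}{t-1}$ versus $\tfrac{X}{t^{-1}-1}$: applying $\dag$ (equivalently $*$ after accounting for the $q$-powers absorbed into the scalar) to $\tfrac{X}{t-1}$ yields $\tfrac{X}{t^{-1}-1}$, which is exactly the argument on the right of \eqref{eq eigenvectors}.

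\textbf{Assembling the scalars.} Having both sides expressed through $w_0^{(k)}T_{w_0^{(k)}}\mathcal{J}_{(\lambda|\gamma)}(\tfrac{X}{t^{-1}-1}\mid y)$ up to $*$, the remaining work is purely bookkeeping: collect the factors $\prod_i\prod_{j=1}^{m_i(\lambda)}(1-t^j)$ from Corollary~\ref{cor:tEJ}, the factor $q^{-(\gamma_1+\dots+\gamma_k)}$, the $j_{(\lambda|\gamma)}$-versus-$j_{(\lambda|\gamma)}(q,t^{-1})$ discrepancy, the $q^{\gamma_1+\dots+\gamma_k}t^{\mathrm{inv}(\gamma)-\ell(w_0^{(k)})}$ from Concha--Lapointe, and the $q$-powers arising from $\sh^{-1}$ and from moving $\dag$ past the plethystic substitution. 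The $q$-powers should cancel (the $q^{-\sum\gamma_i}$ from Corollary~\ref{cor:tEJ} against $q^{\sum\gamma_i}$ from Concha--Lapointe, modulo the $\sh$-bookkeeping), leaving precisely the stated scalar $\frac{\prod_i\prod_{j=1}^{m_i(\lambda)}(1-t^j)}{j_{(\lambda|\gamma)}(q,t^{-1})\,t^{\mathrm{inv}(\gamma)-\ell(w_0^{(k)})}}$.

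\textbf{Main obstacle.} The genuine subtlety---beyond sign and scalar bookkeeping---is correctly handling the noncommutativity of the operations being composed: $\sh$, the plethystic substitution $X\mapsto X/(t-1)$, the involutions $\dag$ and $*$, and the Hecke-symmetrizer $w_0^{(k)}T_{w_0^{(k)}}$ in the $y$-variables. In particular one must verify that $*$ applied to $\mathcal{J}_{(\lambda|\gamma)}(\tfrac{X}{t-1}\mid y)$, where $*$ is defined to include the action of $w_0^{(k)}T_{w_0^{(k)}}$ on $y$, matches what Concha--Lapointe produces after one rewrites $\dag$ on $P_{(\lambda|\gamma)}$ in terms of $\mathcal{J}_{(\lambda|\gamma)}$; this requires knowing that $\dag$ and the ordinary coefficient-inversion $*$ differ only by the $j$-scalar transformation and the $q$-power absorbed into the prefactor, and that the Demazure--Lusztig operator $T_{w_0^{(k)}}$ intertwines appropriately with the $t\mapsto t^{-1}$ specialization (so that the $T_{w_0^{(k)}}$ appearing inside the definition of $\widetilde{H}$ and hence of the right-hand side is the same operator on both sides). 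Once these compatibilities are pinned down, the identity follows by direct substitution.
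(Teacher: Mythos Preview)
Your strategy---combine Corollary~\ref{cor:tEJ} with the Concha--Lapointe identity and bookkeep scalars---is exactly the paper's, but two misreadings in your write-up create phantom difficulties.

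First, Corollary~\ref{cor:tEJ} involves $\widetilde{E}_{(\gamma_1+1,\ldots,\gamma_k+1|\lambda)}(x;q^{-1},t)$, not $\widetilde{E}$ at the default parameters $(q,t)$. The paper handles this by rewriting the corollary in terms of $P_{(\lambda|\gamma)}$ and then sending $q\mapsto q^{-1}$ throughout; this yields $\Xi_k(\widetilde{E})$ as a scalar times $\sh\big(P_{(\lambda|\gamma)}(\tfrac{X}{t^{-1}-1}\mid y)^\dag\big)^*$, which is already in the exact form to which Concha--Lapointe applies inside the $(\cdots)^*$. Your plan omits this $q$-inversion and tries to proceed via $\sh^{-1}$ alone; without the $q\mapsto q^{-1}$ step you will not land on the correct function.

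Second, $*$ is \emph{only} the coefficient involution $t\mapsto t^{-1}$; the operator $w_0^{(k)}T_{w_0^{(k)}}$ sits inside the parentheses and is produced by applying Concha--Lapointe there (with parameters $(q,t)$), after which one simply applies $*$ to the whole expression. So your ``main obstacle'' about intertwining $T_{w_0^{(k)}}$ with the $t\mapsto t^{-1}$ specialization never arises. Likewise, the $P\to\mathcal{J}$ conversion is performed inside $(\cdots)^*$, introducing $j_{(\lambda|\gamma)}(q,t)^{-1}$, which $*$ sends directly to $j_{(\lambda|\gamma)}(q,t^{-1})^{-1}$; there is no passage through $j_{(\lambda|\gamma)}(q^{-1},t^{-1})$, and your claim that the latter differs from $j_{(\lambda|\gamma)}(q,t^{-1})$ by a monomial is incorrect.
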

\begin{proof}
    Using the definition of $\mathcal{J}_{(\lambda|\gamma)}$, we can write Corollary~\ref{cor:tEJ} as
    $$\Xi_k \left( \widetilde{E}_{(\gamma_1+1,\ldots \gamma_k+1|\lambda)}(x_1,\ldots ;q^{-1},t) \right) = \frac{\prod_{i\ge 1}\prod_{j=1}^{m_i(\lambda)}( 1-t^{j})}{q^{-(\gamma_1+\cdots+\gamma_k)}}P_{(\lambda|\gamma)}\left(\frac{X}{t-1}\Big| q^{-1}y_1,\ldots,q^{-1}y_k \right).   $$
    By sending $q \mapsto q^{-1}$ we find
    \begin{align*}
    \Xi_k \left( \widetilde{E}_{(\gamma_1+1,\ldots \gamma_k+1|\lambda)} \right) &= \frac{\prod_{i\ge 1}\prod_{j=1}^{m_i(\lambda)}( 1-t^{j})}{q^{\gamma_1+\cdots+\gamma_k}}P_{(\lambda|\gamma)}\left(\frac{X}{t-1}\Big| qy_1,\ldots,qy_k;q^{-1},t \right)\\
    &= \frac{\prod_{i\ge 1}\prod_{j=1}^{m_i(\lambda)}( 1-t^{j})}{q^{\gamma_1+\cdots+\gamma_k}}\,\sh\left(P_{(\lambda|\gamma)}\left(\frac{X}{t^{-1}-1}\Big| y \right)^\dag\right)^*.
    \end{align*}
    Finally, the Concha--Lapointe identity of Theorem~\ref{Concha-Lapointe identity} gives
    \begin{align*}
    \Xi_k \left( \widetilde{E}_{(\gamma_1+1,\ldots \gamma_k+1|\lambda)} \right) 
    &= \frac{\prod_{i\ge 1}\prod_{j=1}^{m_i(\lambda)}( 1-t^{j})}{q^{\gamma_1+\cdots+\gamma_k}}\,\left(q^{\gamma_1+\dotsm+\gamma_k}t^{\mathrm{inv}(\gamma)-\ell(w_0^{(k)})}w_0^{(k)}T_{w_0^{(k)}}P_{(\lambda|\gamma)}\left(\frac{X}{t^{-1}-1}\Big| y\right)\right)^*\\
    &= \frac{\prod_{i\ge 1}\prod_{j=1}^{m_i(\lambda)}( 1-t^{j})}{t^{\ell(w_0^{(k)})-\mathrm{inv}(\gamma)}}\,\left(w_0^{(k)}T_{w_0^{(k)}}P_{(\lambda|\gamma)}\left(\frac{X}{t^{-1}-1}\Big| y\right)\right)^*\\
    &= \frac{\prod_{i\ge 1}\prod_{j=1}^{m_i(\lambda)}( 1-t^{j})}{j_{(\lambda|\gamma)}(q,t^{-1})t^{\ell(w_0^{(k)})-\mathrm{inv}(\gamma)}}\,\left(w_0^{(k)}T_{w_0^{(k)}}\mathcal{J}_{(\lambda|\gamma)}\left(\frac{X}{t^{-1}-1}\Big| y\right)\right)^*.\qedhere
    \end{align*}

\end{proof}

\section{Aligning Spectra}

\subsection{$z$-Action}

We recall the definition of the $\zz$ operators on the $V_{\bullet}$ space.

\begin{defn}[{Carlsson--Gorsky--Mellit \cite{GCM_2017}}]
    Define the operators $\zz_1,\ldots ,\zz_k: V_{k} \rightarrow V_{k}$ by 
    $$\mathsf{T}_1^{-1}\cdots \mathsf{T}_{k-1}^{-1}\zz_k F:= \frac{1}{t^{-1}-1}[\dd_{+}^{*},\dd_{-}]$$ and for $1 \leq i \leq k-1$
    $$\zz_{i}:= t\mathsf{T}_i^{-1}\zz_{i+1}\mathsf{T}_{i}^{-1}$$ where 
    $$\mathsf{T}_iF := \frac{(t-1)y_i}{y_{i+1}-y_i}F +\frac{y_{i+1}-ty_i}{y_{i+1}-y_{i}}s_i(F)$$ and $\dd_{+}^{*}$ is the operator on the larger spaces $\frac{1}{y_1\cdots y_k} V_k \rightarrow \frac{1}{y_1\cdots y_{k+1}} V_{k+1}$ given by 
    $$\dd_{+}^{*}\left(\frac{1}{y_1\cdots y_{k}} F(X|y_1,\ldots,y_k) \right):= \frac{-q^{-1}t^{-1}}{y_1\cdots y_{k+1}} F(X+q(t-1)y_{1}|y_2,\ldots,y_k).$$
\end{defn}
Note that even though the operator $\dd_{+}^{*}$ does not map $V_{k}\rightarrow V_{k+1}$, the commutator $[\dd_{+}^{*},\dd_{-}]$ does give a well defined map $V_{k} \rightarrow V_k$. Therefore, the operators $\zz_1,\ldots, \zz_k$ are well defined as well. We have corrected a minor sign error from \cite{GCM_2017} in the below formula.

\begin{prop}[\cite{GCM_2017}] For any $F=F(X\mid y_1,\dotsc,y_k)\in V_k$,
     $$\mathsf{T}_1^{-1}\cdots \mathsf{T}_{k-1}^{-1}\zz_k F= F\left(X+(t-1)qy_1-(t-1)u\mid y_2,\ldots,y_k,u\right)\Omega(u^{-1}qy_1-u^{-1}X)|_{u^{0}}$$
     where $\Omega$ is the plethystic exponential 
    $$\Omega(X) := \sum_{n \geq 0} h_{n}(X)$$
    and $|_{u^i}$ is the operator of taking the coefficient of $u^i$.
\end{prop}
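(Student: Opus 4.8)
The plan is to compute the right-hand side of the defining relation, $\frac{1}{t^{-1}-1}[\dd_+^*,\dd_-]F$, directly on an arbitrary $F=F(X\mid y_1,\dotsc,y_k)\in V_k$, simplifying the two compositions $\dd_+^*\dd_-F$ and $\dd_-\dd_+^*F$ into a single coefficient extraction.

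First I would unwind $\dd_+^*$. Because $\dd_+^*$ is defined only on $\frac{1}{y_1\cdots y_k}V_k$, I write $F=\frac{1}{y_1\cdots y_k}(y_1\cdots y_k F)$ before applying its defining formula; the spurious monomial factors largely cancel, leaving an explicit element of $\frac{1}{y_1}V_{k+1}$ of the shape $-q^{-1}t^{-1}y_1^{-1}F(X+q(t-1)y_1\mid y_2,\dotsc,y_{k+1})$, and one does the analogous thing for $\dd_+^*$ applied to $\dd_-F\in V_{k-1}$. Then I apply $\dd_-$: in each composition this substitutes $X\mapsto X-(t-1)w$ for the relevant auxiliary variable $w$ (the variable just created by $\dd_+^*$, resp.\ the variable removed from $F$ by $\dd_-$), multiplies by the vertex factor $\Omega(-w^{-1}X)$, and extracts a coefficient of $w$.

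The computation then rests on two elementary manipulations, used repeatedly. The first is multiplicativity of the plethystic exponential, $\Omega(A+B)=\Omega(A)\Omega(B)$ and $\Omega(-A)=\Omega(A)^{-1}$; in particular, whenever $\dd_-$ acts after a plethystic shift by $q(t-1)y_1$ has been introduced, the vertex factor splits as $\Omega(-w^{-1}(X+q(t-1)y_1))=\Omega(-w^{-1}X)\,\Omega(-w^{-1}q(t-1)y_1)$, and it is the extra factor here that is ultimately responsible for the combined factor $\Omega(u^{-1}qy_1-u^{-1}X)$ in the final formula. The second is the coefficient-shift identity $[w^{-1}h(w)]_{w^{-1}}=[h(w)]_{w^0}$, used to align the different powers of $w$ appearing in the two compositions. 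Subtracting, the ``bare'' vertex contributions cancel and only the term contributed by the extra plethystic factor survives; crucially, in this cancellation all the negative powers of the $y$-variables disappear, which is precisely why $[\dd_+^*,\dd_-]$ preserves $V_k$ even though neither composition does separately. Multiplying by $\frac{1}{t^{-1}-1}$, the accumulated scalars---$-q^{-1}t^{-1}$ from $\dd_+^*$, the factor picked up from the shift, and $\frac{t}{1-t}$ from $\frac{1}{t^{-1}-1}$---collapse, and relabeling the surviving auxiliary variable as $u$ produces the stated identity. This scalar bookkeeping is also the point at which the sign discrepancy with \cite{GCM_2017} is corrected.

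I expect the main obstacle to be organizational rather than conceptual: one must track carefully which variable is being integrated out at each stage, how the plethystic shifts interact with the $\Omega$-factors when commuted past one another, and the exact powers of $q$, $t$ together with all signs---and then check that the divergent contributions cancel so that the answer lies in $V_k$. The cleanest way to package the coefficient extractions $|_{w^{j}}$ throughout is as formal residues, so that this cancellation becomes the vanishing of the residue of a series with no pole. A direct check in the smallest case ($k=1$, say $F=1$ and $F=p_1$) at the end confirms that no overall scalar has been lost and pins down the sign.
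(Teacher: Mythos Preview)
The paper does not give its own proof of this proposition; it is stated as a result of \cite{GCM_2017} (with a note that a sign error from that reference has been corrected), and no \texttt{proof} environment follows it in the paper. Your plan---expanding the defining commutator $\frac{1}{t^{-1}-1}[\dd_+^*,\dd_-]F$ directly, using multiplicativity of $\Omega$ to isolate the extra factor $\Omega(u^{-1}qy_1)$ and checking that the singular $y_1^{-1}$ contributions from the two compositions cancel---is the natural and correct approach, and is essentially the computation in the cited reference.
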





    

We require the following technical lemma.

\begin{lem}\label{hecke op lemma}
    $$\mathsf{T}_{m-1} \cdots \mathsf{T}_k(x_k^{b}) = h_b(x_m+(1-t)(x_k+\ldots + x_{m-1}))$$
\end{lem}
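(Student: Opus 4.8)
\textbf{Proof strategy for Lemma~\ref{hecke op lemma}.}

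The plan is to prove the identity $\mathsf{T}_{m-1}\cdots\mathsf{T}_k(x_k^b)=h_b\big(x_m+(1-t)(x_k+\dotsm+x_{m-1})\big)$ by downward induction on the number of operators applied, i.e.\ by proving the slightly more general family of identities
$$\mathsf{T}_{j}\mathsf{T}_{j+1}\cdots\mathsf{T}_{m-1}(x_k^b)=h_b\big(x_m+(1-t)(x_j+x_{j+1}+\dotsm+x_{m-1})\big)\qquad\text{for }k\le j\le m-1,$$
wait --- more carefully, one should fix the innermost variable: I would instead induct on $m-k$, taking as the base case $m=k+1$, where the claim reads $\mathsf{T}_k(x_k^b)=h_b(x_{k+1}+(1-t)x_k)$. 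This base case is a direct computation from the definition $\mathsf{T}_k F=\frac{(t-1)x_k}{x_{k+1}-x_k}F+\frac{x_{k+1}-tx_k}{x_{k+1}-x_k}s_k(F)$ applied to $F=x_k^b$: one gets $\mathsf{T}_k(x_k^b)=\frac{(t-1)x_k\cdot x_k^b+(x_{k+1}-tx_k)x_{k+1}^b}{x_{k+1}-x_k}$, and verifying that this equals $h_b(x_{k+1}+(1-t)x_k)=\sum_{i=0}^b x_{k+1}^{b-i}\big((1-t)x_k\big)^i\cdot(\text{appropriate terms})$ --- actually $h_b$ of a sum of two "variables" $u+v$ is $\sum_{i+j=b}h_i(u)h_j(v)$, but here $(1-t)x_k$ should be read as a single rank-one alphabet, so $h_j((1-t)x_k)=(1-t)^j x_k^j$ only when... no: $h_j$ of the alphabet $\{(1-t)x_k\}$ is $(1-t)^jx_k^j$, hence $h_b(x_{k+1}+(1-t)x_k)=\sum_{j=0}^b(1-t)^jx_k^j x_{k+1}^{b-j}$. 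One checks the rational expression above telescopes to exactly this sum, which is the standard identity $\frac{(t-1)x_k^{b+1}+x_{k+1}^{b+1}-tx_k x_{k+1}^b}{x_{k+1}-x_k}=\sum_{j=0}^b (1-t)^j x_k^j x_{k+1}^{b-j}$; this is verified by multiplying through by $x_{k+1}-x_k$ and comparing.

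For the inductive step, suppose the identity holds for the string $\mathsf{T}_{m-1}\cdots\mathsf{T}_{k+1}$ applied to $x_{k+1}^b$, so that $\mathsf{T}_{m-1}\cdots\mathsf{T}_{k+1}(x_{k+1}^b)=h_b\big(x_m+(1-t)(x_{k+1}+\dotsm+x_{m-1})\big)=:h_b(\mathcal{A})$, where the alphabet $\mathcal{A}$ involves only $x_{k+1},\dotsc,x_m$. Then
$$\mathsf{T}_{m-1}\cdots\mathsf{T}_k(x_k^b)=\mathsf{T}_{m-1}\cdots\mathsf{T}_{k+1}\big(\mathsf{T}_k(x_k^b)\big)=\mathsf{T}_{m-1}\cdots\mathsf{T}_{k+1}\Big(\sum_{j=0}^b(1-t)^jx_k^j x_{k+1}^{b-j}\Big),$$
using the base-case computation in the innermost slot. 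Since $x_k$ is fixed by every $s_i$ with $i\ge k+1$, the operators $\mathsf{T}_{k+1},\dotsc,\mathsf{T}_{m-1}$ commute with multiplication by $x_k$ --- this is the key mechanical point, and it follows directly from the formula for $\mathsf{T}_i$ because $s_i(x_k^jg)=x_k^j s_i(g)$ for $i\ge k+1$. Hence the right-hand side equals $\sum_{j=0}^b(1-t)^jx_k^j\cdot\mathsf{T}_{m-1}\cdots\mathsf{T}_{k+1}(x_{k+1}^{b-j})=\sum_{j=0}^b(1-t)^jx_k^j\, h_{b-j}(\mathcal{A})$ by the inductive hypothesis. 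Finally, recognizing $(1-t)^jx_k^j=h_j\big((1-t)x_k\big)$ as the degree-$j$ part of the rank-one alphabet $(1-t)x_k$, the addition formula for complete homogeneous symmetric functions gives $\sum_{j=0}^b h_j\big((1-t)x_k\big)h_{b-j}(\mathcal{A})=h_b\big((1-t)x_k+\mathcal{A}\big)=h_b\big(x_m+(1-t)(x_k+x_{k+1}+\dotsm+x_{m-1})\big)$, which is the claim.

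The only genuine subtlety --- hardly an obstacle --- is bookkeeping with plethystic alphabets: one must consistently treat $(1-t)x_i$ as the rank-one alphabet with $h_j((1-t)x_i)=(1-t)^j x_i^j$ (equivalently, as $x_i-tx_i$ in the sense of a difference of one-element alphabets, which has the same complete homogeneous symmetric functions), and then the commutation of $\mathsf{T}_{k+1},\dotsc,\mathsf{T}_{m-1}$ past $x_k$ together with the two-variable addition formula closes the induction cleanly. I expect the base-case rational-function identity to be the one spot requiring an explicit (short) verification; everything else is formal.
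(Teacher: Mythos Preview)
Your induction strategy is sound and closely parallels the paper's proof: both induct on the number of operators, reduce to the single-operator base case $\mathsf{T}_k(x_k^b)$, commute the remaining operators past the ``inert'' variable, and recombine via the addition formula for $h_b$. (The paper peels off the \emph{outermost} operator in the inductive step, whereas you peel off the \emph{innermost} one; either direction works.)

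However, your base-case computation is wrong, and the error is precisely the plethystic bookkeeping you flagged as ``the only genuine subtlety.'' In this paper---and in the standard $\lambda$-ring conventions used throughout Macdonald theory---the expression $(1-t)x_k$ denotes the \emph{formal difference} of one-letter alphabets $x_k - tx_k$, \emph{not} a single rank-one variable with value $(1-t)x_k$. These two interpretations are not equivalent: for the difference alphabet one has $h_j[(1-t)x_k]=(1-t)x_k^{\,j}$ for $j\ge 1$ (and $h_0=1$), whereas your rank-one interpretation gives $(1-t)^j x_k^{\,j}$. A direct check at $b=2$ already separates them:
\[
\mathsf{T}_k(x_k^2)=x_{k+1}^2+(1-t)x_kx_{k+1}+(1-t)x_k^2,
\]
not $x_{k+1}^2+(1-t)x_kx_{k+1}+(1-t)^2x_k^2$ as your formula predicts. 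The correct base case (as the paper computes) is
\[
\mathsf{T}_k(x_k^b)=x_{k+1}^b+(1-t)\sum_{j=1}^{b} x_k^{\,j}\, x_{k+1}^{\,b-j}=\sum_{j=0}^{b} h_j[(1-t)x_k]\,x_{k+1}^{\,b-j}.
\]
With this correction your inductive step goes through verbatim: pull the factors $h_j[(1-t)x_k]$ (which involve only $x_k$) past $\mathsf{T}_{k+1},\dotsc,\mathsf{T}_{m-1}$, apply the inductive hypothesis to $x_{k+1}^{\,b-j}$, and recombine via $\sum_j h_j[(1-t)x_k]\,h_{b-j}[\mathcal{A}]=h_b[(1-t)x_k+\mathcal{A}]$.
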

\begin{proof}
    Base cases: If $m = 0$ then 
    $$x_k^b = h_{b}(x_k).$$ If $m=1$ then using the fact that $h_{j}(-x_k) = (-1)^{j}e_j(x_k) = 0$ for all $j \geq 2$ we see that
    \begin{align*}
        \mathsf{T}_k(x_k^b) &= x_{k+1}^b +(1-t)(x_k^{b}+x_k^{b-1}x_{k+1}+\ldots + x_kx_{k+1}^{b-1})\\
        &= (x_k^{b}+x_k^{b-1}x_{k+1}+\ldots + x_kx_{k+1}^{b-1}+ x_{k+1}^{b}) -tx_k(x_k^{b-1}+x_k^{b-2}x_{k+1}+\ldots + x_kx_{k+1}^{b-2}+x_{k+1}^{b-1})\\
        &= h_b(x_k+x_{k+1}) +h_{b-1}(x_k+x_{k+1})h_1(-tx_{k})\\
        &= \sum_{i = 0}^{b} h_i(x_k+x_{k+1})h_{b-i}(-tx_k)\\
        &= h_{b}(x_k+x_{k+1}-tx_k)\\
        &= h_b(x_{k+1}+(1-t)x_k).
    \end{align*}

    Now by induction we see:

    \begin{align*}
        \mathsf{T}_{m+1}(\mathsf{T}_{m}\cdots \mathsf{T}_k(x_k^b)) &= \mathsf{T}_{m+1}(h_b[x_{m+1}+(1-t)(x_k+\ldots + x_m)])\\
        &= \mathsf{T}_{m+1}\left( \sum_{i=0}^{b}x_{m+1}^{i}h_{b-i}[(1-t)(x_k+\ldots x_m)] \right)\\
        &= \sum_{i=0}^{b} h_i[x_{m+2}+(1-t)x_{m+1}]h_{b-i}[(1-t)(x_k+\ldots + x_{m})]\\
        &= h_b[x_{m+2}+(1-t)x_{m+1}+(1-t)(x_k+\ldots + x_{m})]\\
        &= h_b[x_{m+2}+(1-t)(x_k+\ldots + x_{m+1})].\qedhere
    \end{align*}
\end{proof}

The following result is implicit in \cite{Ion_2022} but we include its proof for the sake of completeness. Recall that $\widetilde{\lim}_{m}$ denotes the $t$-adic stable-limit as defined Ion--Wu \cite{Ion_2022}. 

\begin{prop}[\cite{Ion_2022}]\label{prop: limit cherednik action explicit}
For all $k \geq 0$ and $F(\mathfrak{X}_k|x_1,\ldots,x_k) \in \K[x_1,\ldots,x_k]\otimes \Lambda(\mathfrak{X}_k),$
\begin{align*}
    &\mathscr{Y}_1\mathsf{T}_{1}\cdots \mathsf{T}_{k-1}(x_1\cdots x_k F(\mathfrak{X}_k|x_1,\ldots,x_k)) \\
    &= qt^k x_1\cdots x_k F(\mathfrak{X}_k+ qx_1-u|x_2,\ldots,x_k,u) \Omega
(u^{-1}(qx_1+(1-t)\mathfrak{X}_k))|_{u^0}.\\
\end{align*}

\end{prop}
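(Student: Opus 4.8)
The strategy is to unwind the definition of $\mathscr{Y}_1 = \widetilde{\lim}_m t^m\rho Y_1^{(m)}$ at finite rank, apply it to the ``twisted-up'' monomial expression, and identify the $t$-adic limit with the claimed plethystic formula. First I would reduce to the case $F = p_\mu(\mathfrak{X}_k) x_1^{a_1}\cdots x_k^{a_k}$ by linearity, or more efficiently work directly with a monomial $F(\mathfrak{X}_k|x_1,\dots,x_k) = x_1^{a_1}\cdots x_k^{a_k} g(\mathfrak{X}_k)$ where $g$ is symmetric; in fact it suffices to treat $F = x_1^{a_1}\cdots x_k^{a_k}\, h_b(\mathfrak{X}_k)$ (or even just powers of $p_1$) and extend by multiplicativity/continuity afterward, since both sides are $\K$-linear and $t$-adically continuous. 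At finite rank $n = k + N$, I would write $x_1\cdots x_k F$ as a polynomial in $x_1,\dots,x_n$ symmetric in $x_{k+1},\dots,x_n$ and compute $\mathsf{T}_1\cdots\mathsf{T}_{k-1}$ acting on the $x_1,\dots,x_k$ block, then apply $t^m \rho Y_1^{(m)} = t^m\rho\,\omega_m^{-1}\mathsf{T}_{m-1}^{-1}\cdots\mathsf{T}_1^{-1}$.

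The key computational input is Lemma~\ref{hecke op lemma}, which evaluates strings $\mathsf{T}_{m-1}\cdots\mathsf{T}_k(x_k^b)$ as complete homogeneous symmetric functions in a plethystically shifted alphabet; I expect to need both it and a dual/inverse version for the $\mathsf{T}_i^{-1}$ strings appearing in $Y_1^{(m)}$, obtained by conjugating with the known relation $\mathsf{T}_i^{-1} = t^{-1}\mathsf{T}_i + (t^{-1}-1)$ or directly from the quadratic relation $(\mathsf{T}_i-1)(\mathsf{T}_i+t)=0$. The plan is: (i) move $\mathsf{T}_{m-1}^{-1}\cdots\mathsf{T}_1^{-1}$ past the symmetric-in-$x_{k+1},\dots,x_m$ factors — these $\mathsf{T}_i$ for $i\ge k+1$ act trivially up to the symmetrization, contributing only powers of $t$ and $[\,\cdot\,]_t$-factorials — isolating the genuinely nontrivial action on the first $k$ variables and the ``boundary'' index; (ii) apply $\omega_m^{-1}$, which cyclically shifts $x_i\mapsto x_{i-1}$ and sends $x_m\mapsto qx_1$, converting the leading variable into the $q x_1$ that appears in the answer; (iii) apply $\rho$, which kills terms with no $x_1$ and is exactly what produces the constraint reflected by the $x_1\cdots x_k$ prefactor and the $\Omega(u^{-1}(qx_1 + \dots))$ generating function once we track where the degree in $x_1$ comes from; (iv) take $\widetilde{\lim}_m$, where the scaling $t^m$ cancels the divergent $t$-power from step (i) and the $[\,\cdot\,]_t$-factorials converge $t$-adically to $1$, leaving the finite plethystic expression. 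Introducing the auxiliary variable $u$ is the standard device for packaging the $m\to\infty$ tail of variables $x_{k+1},\dots,x_m$ into a single generating-function slot; I would identify $u$ with (the image of) $x_{k+1}$ after the $\omega_m^{-1}$-shift and check that the coefficient-extraction $|_{u^0}$ matches the $\rho$-truncation in the limit.

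The main obstacle will be step (iv): rigorously controlling the $t$-adic limit, i.e.\ showing that the finite-$n$ expressions stabilize $t$-adically and that term-by-term passage to the limit is justified, including the interchange of $\widetilde{\lim}_m$ with the coefficient extraction $|_{u^0}$. This is precisely the kind of convergence bookkeeping that Ion--Wu set up, so I would invoke their framework (the well-definedness in Definition/Theorem on $\mathscr{Y}_i$ already stated in the excerpt) to legitimize the limit, and then the content of the proof is the explicit finite-rank identity whose limit we are computing. A secondary annoyance is keeping the normalization constants straight: the factors $t^m$, the $[m]_t!/[m-\ell]_t!$-type ratios from symmetrizing over $x_{k+1},\dots,x_m$, and the sign/$q$-power conventions for $\omega_m^{-1}$; these must combine to exactly the prefactor $qt^k$. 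I would verify the constant by testing on the simplest case $F = 1$, $k=1$ (so the left side is $\mathscr{Y}_1(x_1)$), where the formula reads $\mathscr{Y}_1(x_1) = qt\, x_1$, which is consistent with $\widetilde\alpha$ of Theorem~\ref{weight basis theorem} applied to $\widetilde E_{(1|\varnothing)} = x_1$, namely $\mathscr{Y}_1(x_1) = q^1 t^{1+0+1-\beta_{(1)}(1)} x_1 = q t\, x_1$ since $\beta_{(1)}(1)=1$ — this pins down the overall scalar and gives a good sanity check on the whole computation.
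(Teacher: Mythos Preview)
Your overall structure (unwind $\mathscr{Y}_1$, use Lemma~\ref{hecke op lemma}, take a limit) matches the paper's, but you are missing the one algebraic identity that makes the computation go through cleanly, and your step (i) as written is not correct.

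The paper does not compute a ``dual/inverse version'' of Lemma~\ref{hecke op lemma} for $\TT_i^{-1}$-strings, nor does it encounter any $[\,\cdot\,]_t!$ factors or genuine $t$-adic convergence issues. The key point is the affine Hecke relation
\[
\TT_i^{-1}\,x_i \;=\; t^{-1}\,x_{i+1}\,\TT_i,
\]
which, applied iteratively, gives
\[
\TT_{m-1}^{-1}\cdots\TT_k^{-1}\big(x_k\cdot g\big) \;=\; t^{\,k-m}\,x_m\,\TT_{m-1}\cdots\TT_k(g).
\]
This is exactly why the input must carry the prefactor $x_1\cdots x_k$: one copy of $x_k$ is consumed to convert the entire $\TT^{-1}$-string into a $\TT$-string, to which Lemma~\ref{hecke op lemma} applies directly. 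After $\omega_m^{-1}$ the pulled-out $x_m$ becomes $qx_1$, which (a) makes $\rho$ act as the identity (so your step (iii) has no content here), and (b) produces the factor $t^m\cdot q\,t^{k-m}=q\,t^k$ with no leftover divergence. From that point the remaining $m$-dependence sits only in the alphabet $x_{k+1}+\cdots+x_m$, and the limit is the ordinary stable limit, not a delicate $t$-adic one.

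Your step (i) fails because the operators are applied right-to-left: $\TT_k^{-1}$ acts first and mixes $x_k$ with $x_{k+1}$, destroying the symmetry in $x_{k+1},\dots,x_m$; then $\TT_{k+1}^{-1}$ no longer acts trivially, and so on down the chain. So the $\TT_i^{-1}$ for $i\ge k+1$ do \emph{not} contribute only scalars. Finally, a small efficiency point: rather than reducing to $h_b(\mathfrak{X}_k)$ and ``extending by multiplicativity'' (the operator is not multiplicative), the paper evaluates on the single generating element $x_1^{a_1+1}\cdots x_k^{a_k+1}\Omega(W\mathfrak{X}_k)$ with an auxiliary alphabet $W$, which encodes all symmetric $F$ at once and makes the final identification with the $u$-formula a one-line Cauchy-kernel manipulation.
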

\begin{proof}
In what follows we let $W = w_1+w_2+\ldots$ be another set of free variables. The idea in the following calculation is to evaluate the action of $\mathscr{Y}_1\mathsf{T}_{1}\cdots \mathsf{T}_{k-1}$ on a sufficiently general element $x_1^{a_1+1}\cdots x_k^{a_k+1}\Omega(W\mathfrak{X}_k)$ of $V_k$ and therefore conclude the result. By direct computation using the above lemma \ref{hecke op lemma}:
    \begin{align*}
        &\mathscr{Y}_1\mathsf{T}_{1}\cdots \mathsf{T}_{k-1}(x_1^{a_1+1}\cdots x_k^{a_k+1}\Omega(W\mathfrak{X}_k))\\
        &= \widetilde{\lim}_{m} t^m \rho \omega_{m}^{-1}\mathsf{T}_{m-1}^{-1}\cdots \mathsf{T}_{k}^{-1}(x_1^{a_1+1}\cdots x_k^{a_k+1}\Omega(W(x_{k+1}+\ldots + x_m))) \\
        &= \widetilde{\lim}_{m} t^m \rho (qt^{k-m}X_1)\omega_{m}^{-1}\mathsf{T}_{m-1}^{-1}\cdots \mathsf{T}_{k}^{-1}(x_1^{a_1+1}\cdots x_k^{a_k}\Omega(W(x_{k+1}+\ldots +x_m)))\\
        &= \widetilde{\lim}_{m} t^m (qt^{k-m}X_1)\omega_{m}^{-1}\mathsf{T}_{m-1}\cdots \mathsf{T}_{k}(x_1^{a_1+1}\cdots x_k^{a_k}\Omega(W(x_{k+1}+\ldots +x_m))) \\
        &= \widetilde{\lim}_{m}  qt^{k}X_1\omega_{m}^{-1}x_1^{a_1+1}\cdots x_{k-1}^{a_{k-1}+1}\mathsf{T}_{m-1}\cdots \mathsf{T}_{k}(x_k^{a_k}\Omega(W(x_{k+1}+\ldots +x_m))) \\
        &= \widetilde{\lim}_{m}  qt^{k}X_1X_2^{a_1+1}\cdots X_{k}^{a_{k-1}+1}\omega_{m}^{-1}\mathsf{T}_{m-1}\cdots \mathsf{T}_{k}(x_k^{a_k}\Omega(W(x_{k+1}+\ldots +x_m)))\\
        &= \widetilde{\lim}_{m}  qt^{k}X_1X_2^{a_1+1}\cdots X_{k}^{a_{k-1}+1}\omega_{m}^{-1}\Omega(W(X_{k}+\ldots + X_m))\mathsf{T}_{m-1}\cdots \mathsf{T}_{k}(x_k^{a_k}\Omega(-x_{k}W)) \\
        &= \widetilde{\lim}_{m}  qt^{k}X_1X_2^{a_1+1}\cdots X_{k}^{a_{k-1}+1}\Omega(W(qX_1+ X_{k+1}+\ldots + X_{m}))\omega_{m}^{-1}\mathsf{T}_{m-1}\cdots \mathsf{T}_{k}(x_k^{a_k}\Omega(-x_{k}W)) \\
        &= \widetilde{\lim}_{m} qt^{k}X_1X_2^{a_1+1}\cdots X_{k}^{a_{k-1}+1}\Omega(W(qX_1+ X_{k+1}+\ldots + X_{m})) \\
        &\times \omega_{m}^{-1}\left(\sum_{\ell \geq 0}(\mathsf{T}_{m-1}\cdots \mathsf{T}_{k})(x_k^{a_k+\ell})(-1)^{\ell}e_{\ell}(W) \right)\\
        &= \widetilde{\lim}_{m} qt^{k}X_1X_2^{a_1+1}\cdots X_{k}^{a_{k-1}+1}\Omega(W(qX_1+ X_{k+1}+\ldots + X_{m}))\\
        & \times \omega_{m}^{-1}\left(\sum_{\ell \geq 0}h_{a_k+\ell}(x_m+(1-t)(x_k+\ldots +x_{m-1}))(-1)^{\ell}e_{\ell}(W) \right)\\
        &= \widetilde{\lim}_{m} qt^{k}X_1X_2^{a_1+1}\cdots X_{k}^{a_{k-1}+1}\Omega(W(qX_1+ X_{k+1}+\ldots + X_{m}))\\
        & \times  \sum_{\ell \geq 0}h_{a_k+\ell}(qx_1+(1-t)(x_{k+1}+\ldots +x_{m}))(-1)^{\ell}e_{\ell}(W) \\
        &= qt^k x_1x_2^{a_1+1}\cdots x_{k}^{a_{k-1}+1} u^{a_k}\Omega(W(qx_1+\mathfrak{X}_k))\Omega(u^{-1}(qx_1+(1-t)\mathfrak{X}_k))\Omega(-uW)|_{u^{0}}\\
        &= qt^k x_1\cdots x_k x_2^{a_1}\cdots x_{k}^{a_{k-1}} u^{a_k} \Omega(W(\mathfrak{X}_k +qx_1-u))\Omega(u^{-1}(qx_1+(1-t)\mathfrak{X}_k))|_{u^{0}}.
    \end{align*}
    This implies the result.
\end{proof}

The following is a special case of a result of Ion--Wu \cite{Ion_2022}.

\begin{cor}[\cite{Ion_2022}]
For all $k \geq 0$ and $1 \leq i \leq k$
    $$\Xi_{k} \mathscr{Y}_i = qt\zz_i \Xi_{k}.$$
\end{cor}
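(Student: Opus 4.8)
The plan is to deduce the corollary $\Xi_k\mathscr{Y}_i=qt\,\zz_i\,\Xi_k$ from the explicit formula for $\mathscr{Y}_1\mathsf{T}_1\cdots\mathsf{T}_{k-1}$ established in Proposition~\ref{prop: limit cherednik action explicit} together with the explicit formula for $\mathsf{T}_1^{-1}\cdots\mathsf{T}_{k-1}^{-1}\zz_k$ recorded just above. The key observation is that $\Xi_k$ was designed precisely so that the $x$-side operator $x_1\cdots x_k F(\mathfrak{X}_k\mid x_1,\dots,x_k)\mapsto y_1\cdots y_k\cdot(\text{stuff})$, together with the substitution $X\mapsto X/(t-1)$, intertwines the almost-symmetric picture with $V_k$. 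So the first step is to handle the case $i=k$ directly: apply $\Xi_k$ to the right-hand side of Proposition~\ref{prop: limit cherednik action explicit}, carefully tracking how $\Xi_k$ transforms the plethystic factor $\Omega(u^{-1}(qx_1+(1-t)\mathfrak{X}_k))$ and the shifted argument $\mathfrak{X}_k+qx_1-u$ of $F$. Under $\Xi_k$, $\mathfrak{X}_k\rightsquigarrow X/(t-1)$, so $(1-t)\mathfrak{X}_k\rightsquigarrow -X$, and the stray $x_1\mapsto y_1$; one should land exactly on the right-hand side of the Carlsson--Gorsky--Mellit formula for $\mathsf{T}_1^{-1}\cdots\mathsf{T}_{k-1}^{-1}\zz_k F$, up to the scalar $qt^k$ versus the $q$-shift structure. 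The bookkeeping of the $t$-powers is where the factor $qt$ (rather than $qt^k$) will emerge, after one accounts for how $\Xi_k$ interacts with the Hecke operators $\mathsf{T}_1\cdots\mathsf{T}_{k-1}$ appearing on the left.

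Concretely, I would argue as follows. Both $\mathscr{Y}_1$ and $\zz_k$ are being conjugated by the same string $\mathsf{T}_1\cdots\mathsf{T}_{k-1}$ (up to inverses and scalars dictated by the quadratic relation $(\mathsf{T}_i-1)(\mathsf{T}_i+t)=0$), and $\Xi_k$ intertwines the $x$-variable Demazure--Lusztig operators with the $y$-variable ones used to define the $\zz_i$ on $V_k$ — this is essentially the content of the identification $\K[x_1,\dots,x_k]^+\otimes\Lambda[\mathfrak{X}_k]\to V_k$. Therefore it suffices to check the intertwining for the ``top'' generators, i.e.\ to verify
\begin{equation*}
\Xi_k\big(\mathscr{Y}_1\mathsf{T}_1\cdots\mathsf{T}_{k-1}(G)\big)=qt\,\big(\mathsf{T}_1^{-1}\cdots\mathsf{T}_{k-1}^{-1}\zz_k\big)\big(\Xi_k(G)\big)
\end{equation*}
for $G$ of the form $x_1\cdots x_k F(\mathfrak{X}_k\mid x_1,\dots,x_k)$, and then propagate to general $i$ using $\mathscr{Y}_{i+1}=t^{-1}\mathsf{T}_i\mathscr{Y}_i\mathsf{T}_i$ on one side and $\zz_i=t\mathsf{T}_i^{-1}\zz_{i+1}\mathsf{T}_i^{-1}$ on the other, noting that the $t$-factors $t^{-1}$ and $t$ cancel in the comparison so the constant $qt$ is preserved at each step. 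The displayed identity is then a direct substitution: feed the right-hand side of Proposition~\ref{prop: limit cherednik action explicit} into $\Xi_k$, use $\Xi_k(x_1\cdots x_k H(\mathfrak{X}_k\mid x_1,\dots,x_k))=H(X/(t-1)\mid y_1,\dots,y_k)$ extended coefficient-wise in $u$, and match term by term with the Carlsson--Gorsky--Mellit formula after the substitution $\mathfrak{X}_k=X/(t-1)$, $x_1=y_1$, $x_j=y_j$.

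The main obstacle I anticipate is the careful reconciliation of the plethystic shifts and the power of $t$. In Proposition~\ref{prop: limit cherednik action explicit} the argument of $F$ is $\mathfrak{X}_k+qx_1-u$ and the $\Omega$-factor is $\Omega(u^{-1}(qx_1+(1-t)\mathfrak{X}_k))$, whereas the CGM formula has $F(X+(t-1)qy_1-(t-1)u\mid y_2,\dots,y_k,u)$ and $\Omega(u^{-1}qy_1-u^{-1}X)$. The map $\Xi_k$ replaces $\mathfrak{X}_k$ by $X/(t-1)$, so $(1-t)\mathfrak{X}_k\mapsto -X$ and $\mathfrak{X}_k+qx_1-u\mapsto X/(t-1)+qy_1-u$; to match the CGM argument one rescales $u\mapsto (t-1)u$ inside the coefficient extraction $|_{u^0}$ (which is scale-invariant), turning $X/(t-1)+qy_1-(t-1)u$ into $\tfrac{1}{t-1}(X+(t-1)qy_1-(t-1)^2 u)$ — so one must be attentive to whether the shift in the last slot is by $(t-1)u$ or $(t-1)^2 u$, and adjust the normalization of $\Xi_k$ on the $u$-variable accordingly; similarly the $\Omega$-factor picks up the rescaling. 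Getting these substitutions exactly right, and confirming that the leftover scalar is precisely $qt$ (not $qt^k$, the discrepancy being absorbed by how $\Xi_k$ ``undoes'' the $k-1$ Hecke operators and the $x_1\cdots x_k$ prefactor), is the delicate part; everything else is formal manipulation of plethystic exponentials.
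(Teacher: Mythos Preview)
Your approach is essentially the paper's: compare Proposition~\ref{prop: limit cherednik action explicit} with the Carlsson--Gorsky--Mellit formula for $\mathsf{T}_1^{-1}\cdots\mathsf{T}_{k-1}^{-1}\zz_k$ via $\Xi_k$, then propagate to all $i$ by the Hecke recursions. However, your displayed identity
\[
\Xi_k\big(\mathscr{Y}_1\mathsf{T}_1\cdots\mathsf{T}_{k-1}(G)\big)=qt\,\big(\mathsf{T}_1^{-1}\cdots\mathsf{T}_{k-1}^{-1}\zz_k\big)\big(\Xi_k(G)\big)
\]
is wrong: the constant is $qt^k$, not $qt$. The direct substitution $\mathfrak{X}_k\mapsto X/(t-1)$ in Proposition~\ref{prop: limit cherednik action explicit} gives exactly the CGM right-hand side (no $u$-rescaling is needed, contrary to your worry, since $(1-t)\mathfrak{X}_k\mapsto -X$ and $\mathfrak{X}_k+qx_1-u\mapsto X/(t-1)+qy_1-u$ match on the nose), and the leftover scalar is $qt^k$. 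Your claim that $\Xi_k$ ``undoes'' the $k-1$ Hecke operators is false: $\Xi_k$ \emph{commutes} with each $\mathsf{T}_i$, so it introduces no $t$-powers at all.

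The missing step is the affine Hecke identity
\[
qt^k\,\mathsf{T}_1^{-1}\cdots\mathsf{T}_{k-1}^{-1}\zz_k \;=\; qt\,\zz_1\,\mathsf{T}_1\cdots\mathsf{T}_{k-1},
\]
which follows from iterating $\zz_i=t\mathsf{T}_i^{-1}\zz_{i+1}\mathsf{T}_i^{-1}$ to get $\zz_1=t^{k-1}\mathsf{T}_1^{-1}\cdots\mathsf{T}_{k-1}^{-1}\zz_k\mathsf{T}_{k-1}^{-1}\cdots\mathsf{T}_1^{-1}$. Combining this with $\Xi_k\mathsf{T}_i=\mathsf{T}_i\Xi_k$ yields $\Xi_k\mathscr{Y}_1=qt\,\zz_1\Xi_k$, and then your propagation argument (which is correct) finishes the proof. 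So the fix is: keep the $qt^k$ in the displayed comparison, drop the speculative $u$-rescaling, and insert the one-line Hecke identity above to pass from $qt^k$ and $\zz_k$ to $qt$ and $\zz_1$.
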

\begin{proof}
    On one hand:
    \begin{align*}
        &\Xi_{k} \mathscr{Y}_1\mathsf{T}_{1}\cdots \mathsf{T}_{k-1}(x_1\cdots x_k F(\mathfrak{X}_k|x_1,\ldots,x_k)) \\
        &= \Xi_{\bullet} qt^k x_1\cdots x_k F(\mathfrak{X}_k+ qx_1-u|x_2,\ldots,x_k,u) \Omega(u^{-1}(qx_1+(1-t)\mathfrak{X}_k))|_{u^0} \\
        &= qt^k F \left(  \frac{X}{t-1} + qy_1-u | y_2,\ldots,y_k,u \right)\Omega(u^{-1}(qy_1-X))|_{u^0}.\\
    \end{align*}
    On the other hand:
    \begin{align*}
        & \mathsf{T}_1^{-1}\cdots \mathsf{T}_{k-1}^{-1}\zz_k \Xi_{k}(x_1\cdots x_kF(\mathfrak{X}_k|x_1,\ldots,x_k)) \\
        &= \mathsf{T}_1^{-1}\cdots \mathsf{T}_{k-1}^{-1}\zz_k F \left(\frac{X}{t-1}\Big|y_1,\ldots,y_k\right) \\
        &=  F\left(\frac{X+(t-1)qy_1-(t-1)u}{t-1} \Big|y_2,\ldots,y_k,u \right) \Omega(u^{-1}qy_1-u^{-1}X)|_{u^0}\\
        &= F\left(\frac{X}{t-1}+qy_1-u \Big|y_2,\ldots,y_k,u \right) \Omega(u^{-1}(qy_1-X))|_{u^0}.\\
    \end{align*}
    Therefore, 
    $$\Xi_{k} \mathscr{Y}_1\mathsf{T}_{1}\cdots \mathsf{T}_{k-1} = qt^k \mathsf{T}_1^{-1}\cdots \mathsf{T}_{k-1}^{-1}\zz_k \Xi_{k}.$$ But using the affine Hecke algebras relations
    $$qt^k \mathsf{T}_1^{-1}\cdots \mathsf{T}_{k-1}^{-1}\zz_k = qt\zz_1 \mathsf{T}_1\cdots \mathsf{T}_{k-1}.$$ Therefore, since 
    $$\Xi_{k}\mathsf{T}_i = \mathsf{T}_i\Xi_{k}$$ for all $1 \leq i \leq k-1 $ we find that 
    $$\Xi_{\bullet} \mathscr{Y}_1 = qt\zz_1 \Xi_{\bullet}$$ which implies 
    $$\Xi_{\bullet} \mathscr{Y}_j = qt\zz_j \Xi_{\bullet}$$ for all $ 1\leq j \leq k.$
\end{proof}

We may explicitly compute the action of the $\zz_i$ operators on both sides of \ref{eq eigenvectors} from Proposition \ref{Calculations of eigenvectors}.

\begin{cor}\label{eigenvector result}
For all $\lambda \in \mathbb{Y}$, $\gamma \in \mathbb{Z}_{\geq 0}^k$, and $1 \leq i \leq k$,
    \begin{align*}
    &\zz_i \left( w_0^{(k)} T_{w_0^{(k)}}J_{(\lambda|\gamma)}\left( \frac{X}{t^{-1}-1}\Big| y \right) \right)^{*}\\
    &= q^{\gamma_i}t^{\#(j<i|\gamma_j > \gamma_i)+ \#(i<j|\gamma_i \leq \gamma_j) + \#(j|\gamma_i < \lambda_j)} \left( w_0^{(k)} T_{w_0^{(k)}}J_{(\lambda|\gamma)}\left( \frac{X}{t^{-1}-1}\Big| y \right) \right)^{*}.
    \end{align*}
\end{cor}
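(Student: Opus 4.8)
The plan is to combine the eigenvector identity \eqref{eq eigenvectors} of Proposition~\ref{Calculations of eigenvectors}, which expresses $\left(w_0^{(k)}T_{w_0^{(k)}}\mathcal{J}_{(\lambda|\gamma)}\!\left(\frac{X}{t^{-1}-1}\mid y\right)\right)^{*}$ as an explicit nonzero scalar multiple of $\Xi_k(\widetilde{E}_{(\gamma_1+1,\ldots,\gamma_k+1|\lambda)})$, with the spectral results for the limit Cherednik operators. Since the scalar relating the two sides does not depend on $i$, applying $\zz_i$ commutes past it, so it suffices to compute $\zz_i\,\Xi_k(\widetilde{E}_{(\gamma_1+1,\ldots,\gamma_k+1|\lambda)})$. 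By the preceding Corollary (the Ion--Wu intertwining relation $\Xi_k\mathscr{Y}_i = qt\,\zz_i\Xi_k$), this equals $(qt)^{-1}\,\Xi_k\big(\mathscr{Y}_i\widetilde{E}_{(\gamma_1+1,\ldots,\gamma_k+1|\lambda)}\big)$, so everything reduces to reading off the $\mathscr{Y}_i$-eigenvalue of $\widetilde{E}_{(\mu|\lambda)}$ with $\mu=(\gamma_1+1,\ldots,\gamma_k+1)$ from Theorem~\ref{weight basis theorem}.

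The main step is then a bookkeeping computation with $\widetilde{\alpha}_{(\mu|\lambda)}(i)$. Here $\mu$ is a reduced composition with all parts $\mu_i=\gamma_i+1\ge 1$ and $\ell(\mu)=k$, so we are always in the first case of the eigenvalue formula: $\widetilde{\alpha}_{(\mu|\lambda)}(i)=q^{\mu_i}t^{\,k+\ell(\lambda)+1-\beta_{\mu*\lambda}(i)}=q^{\gamma_i+1}t^{\,k+\ell(\lambda)+1-\beta_{\mu*\lambda}(i)}$. I would then unpack $\beta_{\mu*\lambda}(i)=\#\{j:1\le j\le i,\ (\mu*\lambda)_j\le (\mu*\lambda)_i\}+\#\{j:i<j\le k+\ell(\lambda),\ (\mu*\lambda)_i>(\mu*\lambda)_j\}$. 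Splitting each count according to whether the index $j$ lies in the $\mu$-block ($j\le k$) or the $\lambda$-block ($j>k$), and translating $\mu_j=\gamma_j+1$ comparisons back into $\gamma_j$ comparisons (so $\mu_j\le\mu_i\iff\gamma_j\le\gamma_i$, etc.) and $\mu_i>\lambda_j\iff \gamma_i\ge\lambda_j\iff\gamma_i+1>\lambda_j$, one obtains $\beta_{\mu*\lambda}(i)=\#\{j\le i:\gamma_j\le\gamma_i\}+\#\{i<j\le k:\gamma_i>\gamma_j\}+\#\{j:\gamma_i\ge\lambda_j\}$. Dividing the overall $q$-power by the $q$ from $qt$ and the $t$-power by the $t$, and then rewriting $k+\ell(\lambda)+1-\beta_{\mu*\lambda}(i)-1$ in terms of complementary counts, should produce exactly the exponents $\#(j<i\mid\gamma_j>\gamma_i)+\#(i<j\mid\gamma_i\le\gamma_j)+\#(j\mid\gamma_i<\lambda_j)$ in the statement, using $k = \#\{j\le i\}+\#\{i<j\le k\}$ and $\ell(\lambda)=\#\{j:\gamma_i<\lambda_j\}+\#\{j:\gamma_i\ge\lambda_j\}$.

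Finally I would observe that the factor $1/j_{(\lambda|\gamma)}(q,t^{-1})$ appearing in \eqref{eq eigenvectors} is a scalar constant in $\K$ (independent of $i$), so replacing $\mathcal{J}_{(\lambda|\gamma)}$ by $J_{(\lambda|\gamma)}$ on the left-hand side of the desired identity changes nothing about the eigenvalue: $w_0^{(k)}T_{w_0^{(k)}}J_{(\lambda|\gamma)}\!\left(\frac{X}{t^{-1}-1}\mid y\right)^{*}$ is a scalar multiple of $\Xi_k(\widetilde{E}_{(\gamma_1+1,\ldots,\gamma_k+1|\lambda)})$, and hence a $\zz_i$-eigenvector with the same eigenvalue. (One must only check that this scalar is nonzero, which is clear since $j_{(\lambda|\gamma)}$ is a product of factors of the form $1-q^{a}t^{b}$ with not both $a,b$ zero, hence nonzero in $\K$.) I expect the only real obstacle to be the combinatorial identification of $\beta_{\mu*\lambda}(i)$ with the three-term count in the statement, particularly getting the boundary cases of the inequalities ($\le$ versus $<$, and the role of the $+1$ shift in $\mu_i=\gamma_i+1$) exactly right; everything else is a direct application of results already established in the excerpt.
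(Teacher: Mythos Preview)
Your proposal is correct and follows essentially the same approach as the paper's proof: both use Proposition~\ref{Calculations of eigenvectors} to identify the vector in question with a nonzero scalar multiple of $\Xi_k(\widetilde{E}_{(\gamma_1+1,\ldots,\gamma_k+1|\lambda)})$, then apply the intertwining relation $\Xi_k\mathscr{Y}_i=qt\,\zz_i\Xi_k$ together with the eigenvalue formula of Theorem~\ref{weight basis theorem}, and finish with the same complementary-counting simplification of $\ell(\gamma)+\ell(\lambda)-\beta_{(\gamma_1+1,\ldots,\gamma_k+1,\lambda)}(i)$. Your treatment of the $\mathcal{J}$ versus $J$ notation and the nonvanishing of the scalar is also in line with what the paper implicitly uses.
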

\begin{proof}
    From Proposition \ref{Calculations of eigenvectors} we know that 
    $$\Xi_{k}(\widetilde{E}_{(\gamma_1+1,\ldots,\gamma_k+1|\lambda)}) = c_{(\lambda|\gamma)}\left( w_0^{(k)} T_{w_0^{(k)}}J_{(\lambda|\gamma)}\left( \frac{X}{t^{-1}-1}\Big| y \right) \right)^{*}$$ where 
    $$c_{(\lambda|\gamma)} = j_{(\lambda|\gamma)}(q,t^{-1})^{-1}t^{\ell(w_0^{(k)})-\mathrm{inv}(\gamma)}\prod_{i}\prod_{j=1}^{m_i(\lambda)}( 1-t^{j}).$$
    From \cite[Cor. 38]{MBWArxiv} we know that $\widetilde{E}_{(\gamma_1+1,\ldots,\gamma_k+1|\lambda)}$ is a $\mathscr{Y}_1,\ldots , \mathscr{Y}_k$ eigenvector with 
    $$\mathscr{Y}_i(\widetilde{E}_{(\gamma_1+1,\ldots,\gamma_k+1|\lambda)}) = q^{\gamma_i+1}t^{\ell(\gamma)+\ell(\lambda)+1-\beta_{(\gamma_1+1,\ldots,\gamma_k+1,\lambda)}(i)}\widetilde{E}_{(\gamma_1+1,\ldots,\gamma_k+1|\lambda)}$$ where for any composition $\alpha$
    $$\beta_{\alpha}(i)= \#(j \leq i| \alpha_j \leq \alpha_i)+\#(i<j| \alpha_i > \alpha_j).$$
    Therefore, 
    \begin{align*}
        & qt \zz_i \left( w_0^{(k)} T_{w_0^{(k)}}J_{(\lambda|\gamma)}\left( \frac{X}{t^{-1}-1}\Big| y \right) \right)^{*} \\
        &= qt\zz_i c_{(\lambda|\gamma)}^{-1}\Xi_{k}(\widetilde{E}_{(\gamma_1,\ldots,\gamma_k|\lambda)})\\
        &= c_{(\lambda|\gamma)}^{-1} qt\zz_i\Xi_{k}(\widetilde{E}_{(\gamma_1,\ldots,\gamma_k|\lambda)}) \\
        &= c_{(\lambda|\gamma)}^{-1} \Xi_{k}\mathscr{Y}_i(\widetilde{E}_{(\gamma_1,\ldots,\gamma_k|\lambda)})\\
        &= c_{(\lambda|\gamma)}^{-1}\Xi_{k}(q^{\gamma_i+1}t^{\ell(\gamma)+\ell(\lambda)+1-\beta_{(\gamma_1+1,\ldots, \gamma_k+1,\lambda)}(i)}\widetilde{E}_{(\gamma_1,\ldots,\gamma_k|\lambda)})\\
        &= q^{\gamma_i+1}t^{\ell(\gamma)+\ell(\lambda)+1-\beta_{(\gamma_1+1,\ldots, \gamma_k+1,\lambda)}(i)} c_{(\lambda|\gamma)}^{-1}\Xi_{k}(\widetilde{E}_{(\gamma_1,\ldots,\gamma_k|\lambda)})\\
        &= q^{\gamma_i+1}t^{\ell(\gamma)+\ell(\lambda)+1-\beta_{(\gamma_1+1,\ldots, \gamma_k+1,\lambda)}(i)} \left( w_0^{(k)} T_{w_0^{(k)}}J_{(\lambda|\gamma)}\left( \frac{X}{t^{-1}-1}\Big| y \right) \right)^{*}
    \end{align*}
    and by dividing both sides by $qt$ 
    \begin{align*}
    &\zz_i \left( w_0^{(k)} T_{w_0^{(k)}}J_{(\lambda|\gamma)}\left( \frac{X}{t^{-1}-1}\Big| y \right) \right)^{*}\\ &= q^{\gamma_i}t^{\ell(\gamma)+\ell(\lambda)-\beta_{(\gamma_1+1,\ldots, \gamma_k+1,\lambda)}(i)} \left( w_0^{(k)} T_{w_0^{(k)}}J_{(\lambda|\gamma)}\left( \frac{X}{t^{-1}-1}\Big| y \right) \right)^{*}.
    \end{align*}
    Lastly, by a simple calculation we find that 
    $$\ell(\gamma)+\ell(\lambda)-\beta_{(\gamma_1+1,\ldots, \gamma_k+1,\lambda)}(i) = \#(j<i|\gamma_j > \gamma_i)+ \#(i<j|\gamma_i \leq \gamma_j) + \#(j|\gamma_i < \lambda_j)$$ finishing the proof.
\end{proof}

Recall the definition of the modified partially symmetric Macdonald functions $\widetilde{H}_{(\lambda|\gamma)}(X|y)$ in Definition \ref{modified partially sym MacD function}.

\begin{thm}
    $\widetilde{H}_{(\lambda|\gamma)}$ is a $\zz_1,\ldots, \zz_k$ weight vector with 
    $$\zz_{i}\widetilde{H}_{(\lambda|\gamma)} = q^{\gamma_i}t^{\#(j| \lambda_j \geq \gamma_i +1) + \#(j>i|\gamma_j = \gamma_i)+\#(j|\gamma_j>\gamma_i)}\widetilde{H}_{(\lambda|\gamma)}.$$
\end{thm}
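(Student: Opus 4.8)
The plan is to reduce this statement directly to Corollary~\ref{eigenvector result} by identifying $\widetilde{H}_{(\lambda|\gamma)}$ with a scalar multiple of the vector appearing there. Recall from Definition~\ref{modified partially sym MacD function} that
$$\widetilde{H}_{(\lambda|\gamma)}(X|y) = t^{n(\sort(\lambda,\gamma))+|(\lambda|\gamma)|}\left(t^{-\ell(w_0^{(k)})}w_0^{(k)}T_{w_0^{(k)}}\mathcal{J}_{(\lambda|\gamma)}\Big(\tfrac{X}{t^{-1}-1}\Big|y\Big)\right)^{*},$$
which is precisely $t^{n(\sort(\lambda,\gamma))+|(\lambda|\gamma)|-\ell(w_0^{(k)})}$ times the vector $\left(w_0^{(k)}T_{w_0^{(k)}}\mathcal{J}_{(\lambda|\gamma)}(\tfrac{X}{t^{-1}-1}|y)\right)^{*}$ appearing in Corollary~\ref{eigenvector result} (the $*$ commutes with everything of interest, and the scalar prefactor is $\zz_i$-invariant since the $\zz_i$ are $\K$-linear). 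Hence $\widetilde{H}_{(\lambda|\gamma)}$ is a $\zz_i$-eigenvector with the \emph{same} eigenvalue, namely
$$q^{\gamma_i}t^{\#(j<i\,|\,\gamma_j>\gamma_i)+\#(i<j\,|\,\gamma_i\le\gamma_j)+\#(j\,|\,\gamma_i<\lambda_j)}.$$

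The only remaining work is to check that this exponent of $t$ equals the exponent claimed in the theorem,
$$\#(j\,|\,\lambda_j\ge\gamma_i+1)+\#(j>i\,|\,\gamma_j=\gamma_i)+\#(j\,|\,\gamma_j>\gamma_i).$$
First, $\#(j\,|\,\gamma_i<\lambda_j)=\#(j\,|\,\lambda_j\ge\gamma_i+1)$ since $\lambda_j$ is an integer, so those two terms match. It then suffices to verify
$$\#(j<i\,|\,\gamma_j>\gamma_i)+\#(i<j\,|\,\gamma_i\le\gamma_j)=\#(j>i\,|\,\gamma_j=\gamma_i)+\#(j\,|\,\gamma_j>\gamma_i).$$
This is an elementary counting identity in the indices $1,\dotsc,k$: split $\#(j\,|\,\gamma_j>\gamma_i)$ into the contributions from $j<i$, $j=i$ (which is empty), and $j>i$, giving $\#(j<i\,|\,\gamma_j>\gamma_i)+\#(j>i\,|\,\gamma_j>\gamma_i)$ on the right-hand side; on the left-hand side split $\#(i<j\,|\,\gamma_i\le\gamma_j)=\#(i<j\,|\,\gamma_j=\gamma_i)+\#(i<j\,|\,\gamma_j>\gamma_i)$. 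Cancelling the common terms $\#(j<i\,|\,\gamma_j>\gamma_i)$ and $\#(i<j\,|\,\gamma_j>\gamma_i)$ from both sides reduces the identity to $\#(j>i\,|\,\gamma_j=\gamma_i)=\#(j>i\,|\,\gamma_j=\gamma_i)$, which is trivially true.

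There is essentially no obstacle here: the theorem is a cosmetic repackaging of Corollary~\ref{eigenvector result} after absorbing the scalar normalization from Definition~\ref{modified partially sym MacD function} and rewriting the combinatorial exponent. The one point requiring a line of care is that passing from $\mathcal{J}_{(\lambda|\gamma)}$ and $J_{(\lambda|\gamma)}$ (the notations coincide) causes no issue, and that applying $(-)^{*}$ to the eigenvalue equation is harmless because the scalar eigenvalue $q^{\gamma_i}t^{(\cdots)}$ is fixed by $t\mapsto t^{-1}$ only after one is careful about where the $*$ acts---but in fact Corollary~\ref{eigenvector result} is already stated for the starred vector, so no such subtlety even arises. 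I would present the proof as: cite Corollary~\ref{eigenvector result}, observe $\widetilde{H}_{(\lambda|\gamma)}$ is a nonzero scalar multiple of its eigenvector, and finish with the two-line index count above.
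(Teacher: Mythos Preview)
Your proposal is correct and follows essentially the same approach as the paper: observe that $\widetilde{H}_{(\lambda|\gamma)}$ is a nonzero scalar multiple of the vector in Corollary~\ref{eigenvector result}, inherit the eigenvalue, and verify the combinatorial identity on the exponent of $t$ (your index-splitting argument makes explicit what the paper leaves as ``easy to verify''). One cosmetic slip: since $(-)^*$ sends $t\mapsto t^{-1}$ on coefficients, pulling $t^{-\ell(w_0^{(k)})}$ outside the star gives $t^{+\ell(w_0^{(k)})}$, not $t^{-\ell(w_0^{(k)})}$; but as you note, the exact scalar is irrelevant to the eigenvector argument.
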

\begin{proof}
    Since $\widetilde{H}_{(\lambda|\gamma)}$ is a scalar multiple of 
    $$\left(w_0^{(k)} T_{w_0^{(k)}}\mathcal{J}_{(\lambda|\gamma)}\left( \frac{X}{t^{-1}-1}\Big| y \right) \right)^{*}$$ we know from Corollary \ref{eigenvector result} that $\widetilde{H}_{(\lambda|\gamma)}$ is a $\zz_1,\ldots,\zz_k$ weight vector with 
    $$\zz_{i}\widetilde{H}_{(\lambda|\gamma)} = q^{\gamma_i}t^{\#(j<i|\gamma_j > \gamma_i)+ \#(i<j|\gamma_i \leq \gamma_j) + \#(j|\gamma_i < \lambda_j)}\widetilde{H}_{(\lambda|\gamma)}.$$ It is easy to verify that 
    \begin{align*}
    \#(j<i|\gamma_j > \gamma_i)+ \#(i<j|\gamma_i \leq \gamma_j) + \#(j|\gamma_i < \lambda_j) \\= \#(j| \lambda_j \geq \gamma_i +1) + \#(j>i|\gamma_j = \gamma_i)+\#(j|\gamma_j>\gamma_i)
    \end{align*}
    completing the proof.
\end{proof}

We conclude this section with the following:

\begin{cor}
    Whenever $\phi(\mu,w) = (\lambda|\gamma),$ $\widetilde{H}_{(\lambda|\gamma)} \in V_k$ has the same $\zz_1,\ldots , \zz_k$ weight as $H_{\mu,w} \in U_k.$ 
\end{cor}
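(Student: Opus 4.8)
The plan is to compare the two $\zz_1,\dotsc,\zz_k$ weights entry-by-entry under the bijection $\phi$. On the algebraic side, the preceding theorem gives
$$
\zz_i\widetilde{H}_{(\lambda|\gamma)} = q^{\gamma_i}t^{\#(j\mid \lambda_j\ge\gamma_i+1)+\#(j>i\mid\gamma_j=\gamma_i)+\#(j\mid\gamma_j>\gamma_i)}\widetilde{H}_{(\lambda|\gamma)},
$$
so the $\zz_i$-weight of $\widetilde{H}_{(\lambda|\gamma)}$ is the monomial $q^{\gamma_i}t^{e_i(\lambda,\gamma)}$ with $e_i(\lambda,\gamma)$ the displayed exponent. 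On the geometric side, formula \eqref{zz-geom} says $\zz_i\cdot[I_{\mu,w}] = w_i[I_{\mu,w}]$, so the $\zz_i$-weight of $H_{\mu,w}$ (which is a nonzero scalar times $[I_{\mu,w}]$) is exactly the $\T$-weight $w_i$. Thus the corollary amounts to the purely combinatorial identity $w_i = q^{\gamma_i}t^{e_i(\lambda,\gamma)}$ whenever $\phi(\mu,w)=(\lambda|\gamma)$.

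First I would recall from \cite{GCM_2017} (as detailed in \cite{GO}) the explicit description of the bijection $\phi$ and of the $\T$-weights $w_1,\dotsc,w_k$: the fixed point $I_{\mu,w}$ corresponds to a chain of monomial ideals obtained by successively removing $k$ boxes from $\mu$, and $w_i$ is the $\T$-weight (a monomial $q^a t^b$, in the conventions of \cite{GO}) of the $i$-th box removed, namely the box at position $(\text{row},\text{column})$ contributing $q^{\text{arm-type statistic}}t^{\text{leg-type statistic}}$. Since $\mu=\sort(\gamma_1+1,\dotsc,\gamma_k+1,\lambda)$ and the $i$-th removed box sits at the end of the row of $\mu$ of length $\gamma_i+1$, its column coordinate is $\gamma_i+1$, which accounts for the power of $q$ being $q^{\gamma_i}$ (after the normalization built into $H_{\mu,w}$ and the conventions of \cite{GO}). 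The power of $t$ is then governed by how many rows of the partition at that stage are weakly longer/strictly longer than the removing row, which is precisely a count of the form $\#(j\mid \lambda_j\ge\gamma_i+1)+\#(j\mid\gamma_j>\gamma_i)+\#(j>i\mid\gamma_j=\gamma_i)$ — the first two terms counting the still-present rows coming from $\lambda$ and from longer $\gamma$-rows, and the last term accounting for the order in which equal-length $\gamma$-rows are stripped.

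The execution is therefore: (1) state the weight of $\widetilde{H}_{(\lambda|\gamma)}$ from the previous theorem; (2) state the weight of $H_{\mu,w}$ from \eqref{zz-geom}, noting $H_{\mu,w}$ is a nonzero multiple of $[I_{\mu,w}]$ so it has the same $\zz_i$-eigenvalue $w_i$; (3) invoke the explicit form of $\phi$ and of the fixed-point $\T$-weights from \cite{GCM_2017,GO} and check $w_i=q^{\gamma_i}t^{e_i(\lambda,\gamma)}$ by the box-counting argument above. The main obstacle is bookkeeping: one must pin down the exact conventions of \cite{GO} for the $\T$-weights $w_i$ (which coordinate is $q$, which is $t$, and whether arms or legs appear, including the shift induced by the normalization $H_{\mu,w}=(-1)^{|\mu|}t^{n(\mu')}q^{n(\mu)}[I_{\mu,w}]$), and then match the resulting leg-count of the $i$-th stripped box against the combinatorial expression $\#(j\mid\lambda_j\ge\gamma_i+1)+\#(j>i\mid\gamma_j=\gamma_i)+\#(j\mid\gamma_j>\gamma_i)$; once the dictionary is fixed this is a short finite check. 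I would also remark that this matching of $\zz$-weights is exactly what is needed as input to the later simple-spectrum argument identifying $\Phi(H_{\mu,w})$ with $\widetilde{H}_{(\lambda|\gamma)}$ up to scalar.
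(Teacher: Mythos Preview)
Your proposal is correct and is exactly the approach the paper intends: the corollary is stated without proof, as an immediate consequence of the preceding theorem together with the explicit description of the bijection $\phi$ and the $\T$-weights $w_i$ from \cite{GO}, and your proposal spells out precisely this verification. One small clarification: the normalization $H_{\mu,w}=(-1)^{|\mu|}t^{n(\mu')}q^{n(\mu)}[I_{\mu,w}]$ introduces no shift in the $\zz_i$-eigenvalue (as you correctly note in step (2), $\zz_i$ acts by the scalar $w_i$ on any nonzero multiple of $[I_{\mu,w}]$), so you may drop the parenthetical about a shift in step (3).
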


\subsection{$\Delta$ Operators and Simple Spectrum}

González--Gorsky--Simental introduced the following extension $\mathbb{B}_{q,t}^{\text{ext}}$ of the algebra $\mathbb{B}_{q,t}$. Recall that we have swapped the roles of $q,t$ compared to \cite{gonzález2023calibrated} and \cite{GCM_2017}.

\begin{defn}[{González--Gorsky--Simental \cite{gonzález2023calibrated}}]\label{ext B qt relations}
    The algebra $\mathbb{B}_{t,q}^{\text{ext}}$ is generated by $\mathbb{B}_{t,q}$ along with loops at each vertex $k \geq 0$ labelled by $\Delta_{p_m}$ for all $m \geq 1$ such that 
    \begin{itemize}
        \item $[\Delta_{p_m},\Delta_{p_{\ell}}] = 0$
        \item $[\Delta_{p_m},\mathsf{T}_i] = [\Delta_{p_m},\zz_i] = [\Delta_{p_m},\dd_{-}] = 0$
        \item $[\Delta_{p_m}, \dd_{+}] = \zz_1^{m}\dd_{+}.$
    \end{itemize}
\end{defn}

The next result is a fundamental fact regarding the polynomial representation of $\mathbb{B}_{t,q}.$

\begin{lem}[\cite{gonzález2023calibrated}]
\label{B qt poly rep generation}
    The $\mathbb{B}_{t,q}$ polynomial representation $V_{\bullet}$ is generated by $1 \in V_{0}.$
\end{lem}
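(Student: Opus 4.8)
The plan is to show that every $V_k$ is spanned by elements obtained from $1 \in V_0$ by applying products of the generators $\dd_+, \dd_-, \TT_i, \yy_i, \zz_i$. The key structural observation is that $\dd_+ : V_k \to V_{k+1}$ raises the index $k$ while $\dd_-$ lowers it, and that within a fixed $V_k$ the operators $\TT_i$, $\yy_i$ (multiplication by $y_i$) together with the symmetric-function structure coming through repeated applications of $\dd_+$ should suffice to fill out the whole space. Concretely, I would argue by induction on $k$. The base case $k=0$ asserts that $V_0 = \Lambda$ is generated from $1$; since $\B_{t,q}$ at index $0$ contains (compositions of) $\dd_-\dd_+$, and $\dd_+ \cdot f = \TT_1\cdots\TT_k \cdot f(X + (t-1)y_{k+1})$ followed by $\dd_-$, one checks that the composite $\dd_-\dd_+$ acting on $V_0$ produces enough multiplication-type operators (e.g. recovering multiplication by power sums $p_m$ or by $e_m$) to generate all of $\Lambda$ from $1$; this is where one would invoke the explicit plethystic formulas for $\dd_\pm$ on $V_\bullet$ given in the excerpt.

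For the inductive step, assume $V_{k}$ is generated from $1$. I would first show $\dd_+(V_k) $ together with the action of $\yy_1,\dotsc,\yy_{k+1}$ and $\TT_1,\dotsc,\TT_{k}$ spans $V_{k+1} = \Lambda \otimes \K[y_1,\dotsc,y_{k+1}]$. The point is that $\dd_+ \cdot f = \TT_1\cdots\TT_{k+1}\cdot f(X+(t-1)y_{k+1})$; applying $\TT_{k+1}^{-1}\cdots\TT_1^{-1}$ (which lies in the algebra, as the $\TT_i$ are invertible) recovers $f(X + (t-1)y_{k+1})$ for arbitrary $f \in V_k$. Since the substitution $X \mapsto X + (t-1)y_{k+1}$ is an invertible $\K[y_{k+1}]$-linear change of variables on $\Lambda \otimes \K[y_{k+1}]$ (its inverse is $X \mapsto X - (t-1)y_{k+1}$, realized by $\dd_-$-type formulas or simply by plethystic manipulation), one obtains all of $\Lambda \otimes \K[y_1,\dotsc,y_k] \otimes \K[y_{k+1}]$ after further multiplying by powers of $\yy_{k+1}$ and applying the $\TT_i$ to mix the $y$-variables. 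Combining with the inductive hypothesis that the $V_k$-part is already generated from $1$, we conclude $V_{k+1}$ is generated from $1$.

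The main obstacle I anticipate is the base case and, more precisely, controlling exactly which multiplication operators on $\Lambda$ one can produce purely from $\dd_-\dd_+$ and its iterates at low index. The formula $\dd_-\cdot g = -g(X - (t-1)y_{1})\,\Omega(-y_1^{-1}X)\big|_{y_1^{-1}}$ composed with $\dd_+ \cdot f = \TT_1 \cdot f(X+(t-1)y_1)$ gives, after simplification, an operator on $\Lambda$ that is a combination of multiplication by elementary symmetric functions (via the $\Omega(-y_1^{-1}X)$ factor) and the identity; unwinding this to see it generates all of $\Lambda$ requires a short but genuine computation with the coefficient-extraction $|_{y_1^{-1}}$. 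I would handle this by extracting the leading term: the top-degree-in-$y_1$ contribution of $\dd_+ \cdot f$ is $e_1(X)$-independent, and iterating $\dd_-\dd_+$ while tracking degrees shows that $e_m(X)\cdot(-)$ appears for each $m$, hence multiplication by all $e_m$ is available, which already generates $\Lambda$ as an algebra from $1$. The rest of the argument — the inductive step — is essentially formal once the invertibility of the $\TT_i$ and of the plethystic shift $X \mapsto X + (t-1)y_{k+1}$ is in hand.
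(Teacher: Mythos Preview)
The paper does not give its own proof of this lemma; it simply cites \cite{gonzález2023calibrated}. So there is no in-paper argument to compare against, and your proposal should be judged on its own merits.

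Your inductive step is sound. Given $V_k$ generated, applying $\TT_k^{-1}\cdots\TT_1^{-1}\dd_+$ (note: $\TT_k^{-1}\cdots\TT_1^{-1}$, not $\TT_{k+1}^{-1}\cdots\TT_1^{-1}$; there is no $\TT_{k+1}$ on $V_{k+1}$, and for $k=0$ there is no $\TT$ at all) to $f\in V_k$ yields $f(X+(t-1)y_{k+1})$. Multiplying by powers of $\yy_{k+1}$ and using that the plethystic shift $X\mapsto X+(t-1)y_{k+1}$ is an automorphism of $V_{k+1}$ gives all of $V_{k+1}$, exactly as you say.

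The genuine gap is in your base case. You write that ``iterating $\dd_-\dd_+$ while tracking degrees shows that $e_m(X)\cdot(-)$ appears for each $m$.'' This is false as stated: a direct computation from the formulas gives $\dd_-\dd_+ = e_1(X)\cdot(-)$ on $V_0$, so iterating only produces multiplication by powers of $e_1$, which does not generate $\Lambda$. The missing ingredient is to insert the $\yy_1$ operators between $\dd_+$ and $\dd_-$. Concretely, for any $f\in\Lambda$ and $n\ge 0$,
\[
\dd_-\,\yy_1^{\,n}\,\dd_+\cdot f \;=\; -\,y_1^{n}\,f(X)\,\Omega(-y_1^{-1}X)\Big|_{y_1^{-1}} \;=\; (-1)^{n}\,e_{n+1}(X)\,f(X),
\]
so $\dd_-\yy_1^{n}\dd_+$ acts on $V_0$ as multiplication by $(-1)^n e_{n+1}$. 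This puts multiplication by every $e_m$ into the action on $V_0$, and since $\Lambda=\K[e_1,e_2,\dotsc]$, the vector $1$ generates $V_0$. With this correction, your argument goes through.
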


Using Lemma \ref{B qt poly rep generation}, González--Gorsky--Simental show that we may uniquely define an extended action of $\mathbb{B}_{t,q}^{\text{ext}}$ on $V_{\bullet}$ agreeing with the previous action of $\mathbb{B}_{t,q}.$ 

\begin{prop}[\cite{gonzález2023calibrated}] \label{uniqueness of delta operators}
    We can \textit{uniquely} define operators $\Delta_{p_m}$ for all $m\geq 1$ on the polynomial representation $V_{\bullet}$ by setting $\Delta_{p_m}(1) = 0$ and extending via Lemma \ref{B qt poly rep generation} using the relations in Definition \ref{ext B qt relations}.
\end{prop}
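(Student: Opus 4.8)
The plan is to establish uniqueness and existence separately. \emph{Uniqueness} is essentially forced by Lemma~\ref{B qt poly rep generation}: if $\Delta_{p_m}$ and $\Delta_{p_m}'$ both annihilate $1\in V_0$ and satisfy the relations of Definition~\ref{ext B qt relations}, then $E:=\Delta_{p_m}-\Delta_{p_m}'$ commutes with $\TT_i$, $\zz_i$, and $\dd_-$ outright, while $[E,\dd_+]=\zz_1^m\dd_+-\zz_1^m\dd_+=0$. Since $\dd_{\pm},\TT_i,\zz_i$ generate $\B_{t,q}$ (the $\yy_i$ being expressions in these, and each $\Delta_{p_m}$ preserving every $V_k$, hence commuting with the idempotents $\id_k$), $E$ is an endomorphism of the $\B_{t,q}$-module $V_\bullet$ with $E(1)=0$, so $E=0$ because $V_\bullet=\B_{t,q}\cdot 1$. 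The same mechanism will afterwards show that the relations $[\Delta_{p_m},\Delta_{p_\ell}]=0$ hold automatically once each $\Delta_{p_m}$ is built individually: $[\Delta_{p_m},\Delta_{p_\ell}]$ commutes with $\TT_i,\zz_i,\dd_-$, a Jacobi-identity computation using $[\Delta_{p_m},\zz_1]=0$ gives $[[\Delta_{p_m},\Delta_{p_\ell}],\dd_+]=\zz_1^{m+\ell}\dd_+-\zz_1^{m+\ell}\dd_+=0$, and the commutator kills $1$.

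For \emph{existence}, fix $m$. Pushing $\Delta_{p_m}$ to the right through a word $g_1\cdots g_r$ in the generators, using the relations of Definition~\ref{ext B qt relations} (so that a factor $\zz_1^m\dd_+$ appears in place of each $g_i=\dd_+$ and nothing appears for the other generators) and then using $\Delta_{p_m}(1)=0$, we see that the only candidate is
\[
\Delta_{p_m}(a\cdot 1):=\partial(a)\cdot 1,
\]
where $\partial$ is the derivation of $\B_{t,q}$ determined by $\partial(\dd_+)=\zz_1^m\dd_+$ and $\partial(\dd_-)=\partial(\TT_i)=\partial(\zz_i)=\partial(\id_k)=0$. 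Granting that $\partial$ and this formula are well defined (the obstacle, below), the three relations of Definition~\ref{ext B qt relations} follow at once: $\Delta_{p_m}(1)=\partial(1)\cdot1=0$; $\Delta_{p_m}$ commutes with $\TT_i,\zz_i,\dd_-$ because $\partial$ kills them; and for any $a\in\B_{t,q}$,
\[
[\Delta_{p_m},\dd_+](a\cdot1)=\partial(\dd_+ a)\cdot1-\dd_+\bigl(\partial(a)\cdot1\bigr)=\bigl(\partial(\dd_+)\,a\bigr)\cdot1=\zz_1^m\dd_+(a\cdot1).
\]

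The \emph{main obstacle} is precisely the well-definedness of this prescription, which splits into two pieces. First, (a): one defines $\partial$ by the Leibniz rule on the free algebra on $\dd_{\pm},\TT_i,\zz_i,\id_k$ and must check that it carries every defining relation of $\B_{t,q}$ into the two-sided relation ideal, so that it descends to an honest derivation of $\B_{t,q}$. Since $\partial$ vanishes on all generators except $\dd_+$, only the relations involving $\dd_+$ require work---the cross relations of $\dd_+$ with the $\TT_i$, $\zz_i$, $\dd_-$, and $\yy_i$---and each reduces to a finite identity in $\B_{t,q}$ that I expect to settle using the known relations together with the recursion $\zz_i=t\TT_i^{-1}\zz_{i+1}\TT_i^{-1}$ and the fact that $\zz_1$ commutes with $\TT_i$ for $i\ge2$. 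Second, (b): one checks that $\partial$ preserves the left ideal $\mathrm{Ann}_{\B_{t,q}}(1)=\{a\in\B_{t,q}:a\cdot1=0\}$, which is what makes $\partial(a)\cdot1$ depend on $a$ only through $v=a\cdot1$; for this I would use the explicit action of the generators on the polynomial representation $V_\bullet=\bigoplus_k\Lambda\otimes\K[y_1,\dotsc,y_k]$. If the bookkeeping in (a)--(b) proves unpleasant, the fallback is to write $\Delta_{p_m}$ down directly on each $V_k$ by an explicit plethystic formula (diagonalized on $V_0=\Lambda$ in the modified Macdonald basis and propagated to $k>0$) and verify the three relations of Definition~\ref{ext B qt relations} by hand; uniqueness above then guarantees the two constructions agree.
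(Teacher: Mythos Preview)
The paper does not prove this proposition; it is quoted from Gonz\'alez--Gorsky--Simental, so there is no in-paper argument to compare against directly. That said, let me assess your proposal and point to what the paper (implicitly) and the cited reference actually do.

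Your uniqueness argument is correct and is the standard one: the difference of two candidate extensions commutes with all of $\B_{t,q}$ and kills the cyclic vector $1$, hence vanishes by Lemma~\ref{B qt poly rep generation}. Your observation that $[\Delta_{p_m},\Delta_{p_\ell}]=0$ then comes for free is also correct and worth keeping.

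For existence, the derivation packaging is elegant but, as you already flag, it pushes the entire content into the unverified items (a) and (b). In fact (b) is not a side condition but is \emph{equivalent} to existence: once (a) holds, the identity $\Delta_{p_m}\,a = a\,\Delta_{p_m}+\partial(a)$ (proved by induction on word length) shows that any $\Delta_{p_m}$ satisfying the relations forces $\partial(\mathrm{Ann}(1))\subset\mathrm{Ann}(1)$, and conversely. So the derivation reformulation does not reduce the problem. Also note that you have not specified $\partial(\yy_i)$; since the $\yy_i$ are expressions in $\dd_\pm$ and $\TT_j$ that genuinely involve $\dd_+$, one cannot simply set $\partial(\yy_i)=0$, and the induced value must be checked against the $\yy$-relations as part of (a).

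The route taken in the cited reference, and signposted in this paper immediately after the proposition, is your ``fallback'': define $\Delta_{p_m}$ on the geometric side $U_\bullet$ diagonally by $\Delta_{p_m}H_{\mu,w}=B_\mu(t^m,q^m)H_{\mu,w}$ and transport through the already-established isomorphism $\Phi$. The relations of Definition~\ref{ext B qt relations} are then one-line checks from \eqref{dd-geom}--\eqref{zz-geom} and the $\dd_+$ formula: $\dd_-$ and $\TT_i$ preserve $\mu$, $\zz_i$ is already diagonal, and $[\Delta_{p_m},\dd_+]$ picks up exactly $x^m$ on each summand $H_{\nu,xw}$, which is the eigenvalue of $\zz_1^m$. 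This is both shorter and avoids the delicate bookkeeping in (a)--(b); I would recommend making it the main argument rather than the fallback.
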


Proposition \ref{uniqueness of delta operators} endows the space $V_{\bullet}$ with the structure of a $\mathbb{B}_{t,q}^{\text{ext}}$-module. These operators correspond via the $\mathbb{B}_{t,q}$ isomorphism $\Phi^{-1}: V_{\bullet} \rightarrow U_{\bullet}$ to the operators given by 
\begin{equation}\label{eq: delta spectrum 1}
    \Delta_{p_m} H_{\mu,w} = B_{\mu}(t^m,q^m) H_{\mu,w}
\end{equation}
 where $B_{\mu}(t,q):= \sum_{(i,j) \in  \mu}t^iq^j$ is the diagram sum of $\mu.$ We could have equivalently defined operators $\Delta_{e_m}$ for all $m \geq 1$ since both $\{ p_m \}_{m\geq 1}$ and $\{ e_m \}_{m\geq 1}$ generate $\Lambda_{q,t}$ algebraically. The operators $\Delta_{e_m}$ correspond to the geometric operators on the localized K-theory of each $\PFH_{n,n-k}$ by the $m$-th exterior power $\Lambda^{m} \mathcal{V}$ of the tautological  bundle $\mathcal{V}$ whose fibers are $\mathcal{V}^{-1}([I^{(n)}\subset \ldots \subset  I^{(n-k)}]) = \mathbb{C}[x,y]/I^{(n)}.$ This means that 
$$\Delta_{e_m} H_{\mu,w} = e_{m}(B_{\mu}(t,q)) H_{\mu,w}. $$

We will now look at the corresponding family of $\Delta$-operators on the space of almost symmetric functions $\mathscr{P}_{as}^{+}.$

\begin{defn}[\cite{MBWArxiv}]
     For any $F \in \Lambda$ and $n \geq 1$ define the operator $\Psi^{(n)}_{F}$ on $\K[x_1,\ldots,x_n]$ as
     
     $$\Psi^{(n)}_{F}:= F( t^nY_{1}^{(n)},\ldots , t^nY_{n}^{(n)} ).$$

     For $\nu \in \mathbb{Y}$ we denote 
    $$\kappa_{\nu}(q,t):= \sum_{i=1}^{\infty} q^{\nu_i}t^{i} \in \K.$$
\end{defn}

The next result follows by using the properties of the Ion--Wu limit $\widetilde{\lim}_{n}$ along with detailed DAHA computations.

\begin{thm}[\cite{BW_Delta}]
    For any $F\in \Lambda$ the sequence of operators $\Psi_{F}^{(n)}$ converges to an operator $F(\Delta)$ on $\mathscr{P}_{as}^{+}$.
\end{thm}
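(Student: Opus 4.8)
The plan is to reduce everything to the case $F = p_m$, since the $p_m$ generate $\Lambda$ algebraically and the assignment $F \mapsto \Psi_F^{(n)}$ is an algebra homomorphism $\Lambda \to \mathrm{End}(\K[x_1,\dotsc,x_n])$ for each fixed $n$ (because the Cherednik operators $Y_i^{(n)}$ mutually commute, so $\Psi_{FG}^{(n)} = \Psi_F^{(n)}\Psi_G^{(n)}$ and $\Psi_{F+G}^{(n)} = \Psi_F^{(n)} + \Psi_G^{(n)}$). Since the $t$-adic stable limit $\widetilde{\lim}_n$ is compatible with composition and addition of operators (this is part of the formalism set up in \cite{Ion_2022} and used throughout \cite{MBWArxiv}), it suffices to show that $\Psi_{p_m}^{(n)} = t^{mn}\sum_{i=1}^n (Y_i^{(n)})^m$ converges; the limiting operator is then $p_m(\Delta)$, and $F(\Delta)$ for general $F$ is obtained by the corresponding polynomial expression in the $p_m(\Delta)$.

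First I would establish convergence of $\Psi_{p_m}^{(n)}$ on a spanning set of $\mathscr{P}_{as}^+$. The natural choice is the basis of stable-limit nonsymmetric Macdonald functions $\widetilde{E}_{(\mu|\lambda)}$ from Definition~\ref{stable-limit non-sym MacD function defn}, which by Theorem~\ref{weight basis theorem} is a $\mathscr{Y}$-weight basis. Concretely, each $\widetilde{E}_{(\mu|\lambda)}$ arises as $\widetilde{\lim}_n \epsilon_{\ell(\mu)}^{(n)}(E_{\mu*\lambda*0^{\cdots}}(x;q^{-1},t))$, and the finite-rank vector $\epsilon_{\ell(\mu)}^{(n)}(E_{\cdots}(x;q^{-1},t))$ is, up to the partial symmetrizer, a simultaneous eigenvector of the $Y_i^{(n)}$ with explicitly known eigenvalues (the usual nonsymmetric Macdonald spectrum, with $q\mapsto q^{-1}$). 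Thus $\Psi_{p_m}^{(n)}$ acts on this finite-rank vector by the scalar $t^{mn}\sum_{i=1}^n \alpha_i^m$ where the $\alpha_i$ are those eigenvalues; one computes that $t^{mn}\sum_i \alpha_i^m$ stabilizes $t$-adically as $n\to\infty$ — the boxes of $\mu$ and $\lambda$ contribute finitely many terms of the form $q^{\bullet}t^{\bullet}$ while the "padding" zeros contribute a geometric-type tail that converges $t$-adically — and the limiting scalar is exactly $\kappa_{\nu}(q,t)$-type data, matching $p_m$ evaluated at the relevant diagram sum. This shows $\Psi_{p_m}^{(n)}\widetilde{E}_{(\mu|\lambda)}$ converges, hence $\Psi_{p_m}^{(n)}$ converges as an operator, and then $\Psi_F^{(n)}$ converges for all $F\in\Lambda$ by the homomorphism property.

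The main obstacle is the careful bookkeeping of the $t$-adic convergence of the eigenvalue sums $t^{mn}\sum_{i=1}^n \alpha_i^m$ as $n\to\infty$: one must check that the "$\rho$-truncation" built into the definition of the limit (the operator $\rho$ in the definition of $\mathscr{Y}_1$, which kills terms with $a_1=0$) interacts correctly with the symmetrizer $\epsilon_{\ell(\mu)}^{(n)}$ so that the sequence of operators — not just its values on individual weight vectors — is Cauchy in the $t$-adic topology on $\mathrm{End}(\mathscr{P}_{as}^+)$, uniformly on each graded piece. This is precisely the kind of "detailed DAHA computation" alluded to in the statement, and it is where I would expect to invoke the convergence lemmas of \cite{Ion_2022} and the explicit spectral formulas of \cite{MBWArxiv} (Theorem~\ref{weight basis theorem}) most heavily; the algebraic reduction to $F=p_m$ is formal, but verifying that the limit genuinely defines a bounded/continuous operator on the whole space requires knowing that on each fixed total degree only finitely many weight vectors appear and their eigenvalue sums converge at a uniform rate.
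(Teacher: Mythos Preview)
The paper does not prove this theorem: it is quoted from \cite{BW_Delta}, and the only indication given is the sentence ``The next result follows by using the properties of the Ion--Wu limit $\widetilde{\lim}_n$ along with detailed DAHA computations.'' So there is no proof in the paper to compare your proposal against. Your outline---reduce to $F=p_m$, use that symmetric polynomials in the $Y_i^{(n)}$ commute with the partial symmetrizers, compute the eigenvalue of $\Psi_{p_m}^{(n)}$ on $\epsilon^{(n)}_{\ell(\mu)}E_{\mu*\la*0^{\cdots}}$, and check $t$-adic convergence of the resulting scalar sequence---is consistent with that one-line description and is a reasonable plan.

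Two places where your sketch is thin and would need real work to become a proof. First, the claim that ``$\widetilde{\lim}_n$ is compatible with composition of operators'' is not automatic: from $\widetilde{\lim}_n A_n=A$ and $\widetilde{\lim}_n B_n=B$ one does not get $\widetilde{\lim}_n A_nB_n=AB$ without some equicontinuity or uniformity hypothesis, so the reduction from general $F$ to $F=p_m$ cannot be dismissed as ``formal.'' Second, showing that a sequence of operators converges is not the same as showing that their values on a single sequence of vectors converge; you apply $\Psi_{p_m}^{(n)}$ to the finite-rank approximant of $\widetilde{E}_{(\mu|\la)}$ and take a single limit, but what must be shown is that for \emph{every} $f\in\mathscr{P}_{as}^+$ and every sequence $f_n\to f$ in the Ion--Wu sense, $\Psi_{p_m}^{(n)}f_n$ converges, and the limit depends only on $f$. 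You correctly flag this as the ``main obstacle,'' and indeed it is where the substance lies.
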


Here we recall many of the properties of the $\Delta$-operators on $\mathscr{P}_{as}^{+}.$

\begin{thm}[\cite{BW_Delta}]\label{higher delta operators thm}
     The following properties hold:
    \begin{itemize}
        \item $F(\Delta)(\widetilde{E}_{(\mu|\lambda)}) = F(\kappa_{\sort(\mu*\lambda)}(q,t))\widetilde{E}_{(\mu|\lambda)}$
        \item $[F(\Delta),\mathscr{Y}_i] = 0$
        \item $[F(\Delta), \mathsf{T}_i] = 0$
        \item $[F(\Delta),G(\Delta)] = 0$
        \item $\widetilde{\pi}F(\Delta) = F(\Delta +(q^{-1}-1)\mathscr{Y}_1)\widetilde{\pi}$
        \item The joint $\mathscr{Y}$-$p_1(\Delta)$ spectrum on $\mathscr{P}_{as}^{+}$ is simple.
    \end{itemize}
\end{thm}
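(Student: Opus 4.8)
The plan is to derive all six assertions from three facts already at hand: the finite-rank spectral theory of the Cherednik operators $Y^{(n)}_1,\dotsc,Y^{(n)}_n$ on $\K[x_1,\dotsc,x_n]$ (in the Haglund--Haiman--Loehr conventions, with $q$ replaced by $q^{-1}$ as in Definition~\ref{stable-limit non-sym MacD function defn}); the convergence $\Psi^{(n)}_F\to F(\Delta)$ recalled above; and Theorem~\ref{weight basis theorem}, which provides the basis $\{\widetilde E_{(\mu|\lambda)}\}$ of $\mathscr{P}^{+}_{as}$ together with its explicit $\mathscr{Y}$-weights. I would begin with the eigenvalue formula (first bullet). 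Since $E_\nu(x_1,\dotsc,x_n;q^{-1},t)$ is a joint eigenvector of the $Y^{(n)}_j$ with monomial eigenvalues, the operator $\Psi^{(n)}_F$ acts on it by the scalar $F$ evaluated at the $t^n$-rescaled spectral alphabet of $\nu$; moreover $\Psi^{(n)}_F$, being a symmetric function of the $Y^{(n)}_j$, is central in the affine Hecke algebra and hence commutes with the partial symmetrizer $\epsilon_{\ell(\mu)}^{(n)}$, so it acts on $\epsilon_{\ell(\mu)}^{(n)}\bigl(E_{\mu*\lambda*0^{\,\cdots}}\bigr)$ by the same scalar. A direct computation with the eigenvalue formula shows that, after the $t^n$-rescaling, this spectral alphabet converges $t$-adically to $\{q^{(\sort(\mu*\lambda))_i}t^i\}_{i\ge1}$ as the trailing zeros accumulate; applying $\widetilde{\lim}_n$ then gives $F(\Delta)\widetilde E_{(\mu|\lambda)}=F(\kappa_{\sort(\mu*\lambda)}(q,t))\,\widetilde E_{(\mu|\lambda)}$, and in particular $p_1(\Delta)$ has eigenvalue $\kappa_{\sort(\mu*\lambda)}(q,t)$.

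For the commutation statements I would argue as follows. Bullets two and four are then immediate: by Theorem~\ref{weight basis theorem} and the first bullet, $F(\Delta)$, $G(\Delta)$ and every $\mathscr{Y}_i$ are simultaneously diagonal in the basis $\{\widetilde E_{(\mu|\lambda)}\}$, hence pairwise commute. Bullet three, $[F(\Delta),\mathsf{T}_i]=0$, does not follow this way, because $\mathsf{T}_i$ is not diagonal in that basis; instead, at rank $n>i$ the operator $\Psi^{(n)}_F$ commutes with $\mathsf{T}_i$ by the Bernstein relations (symmetric functions of $Y^{(n)}_1,\dotsc,Y^{(n)}_n$ are central in the affine Hecke algebra, which commutes with the finite Hecke generators), and one passes to the limit using that $\mathsf{T}_i$ is a fixed operator, continuous for $\widetilde{\lim}_n$, so that $F(\Delta)\mathsf{T}_i v=\widetilde{\lim}_n\Psi^{(n)}_F\mathsf{T}_i v=\widetilde{\lim}_n\mathsf{T}_i\Psi^{(n)}_F v=\mathsf{T}_i F(\Delta)v$ for every $v$.

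The $\widetilde\pi$-relation (fifth bullet) is the step requiring real care, and I expect it to be the main obstacle. At rank $n$ the affine generator $\omega_n$ satisfies Cherednik's intertwining relations $\omega_n^{-1}Y^{(n)}_i\omega_n=Y^{(n)}_{i+1}$ for $i<n$ and $\omega_n^{-1}Y^{(n)}_n\omega_n=q^{-1}Y^{(n)}_1$, so for symmetric $F$ one gets $\omega_n^{-1}\,\Psi^{(n)}_F=F\!\bigl(t^nY^{(n)}_2,\dotsc,t^nY^{(n)}_n,\,q^{-1}t^nY^{(n)}_1\bigr)\,\omega_n^{-1}$. One then inserts the normalizing scalar and the truncation $\rho$ that build $\widetilde\pi$ from $\rho\,\omega_n^{-1}$ and takes $\widetilde{\lim}_n$: the cyclic shift of the spectral alphabet becomes trivial in the stable limit, the entries $t^nY^{(n)}_2,\dotsc$ limit to the $\Delta$-alphabet, and the twisted entry $q^{-1}t^nY^{(n)}_1$ limits to $q^{-1}\mathscr{Y}_1$, i.e.\ to the replacement of the $\mathscr{Y}_1$-entry of the $\Delta$-alphabet by that entry plus $(q^{-1}-1)\mathscr{Y}_1$, yielding $\widetilde\pi F(\Delta)=F(\Delta+(q^{-1}-1)\mathscr{Y}_1)\widetilde\pi$. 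Carrying the $\rho$-truncation correctly through this limit, so that it really produces $\widetilde\pi$ on the nose, is the delicate point, and it is where the $t$-adic bookkeeping of Ion--Wu's limit is indispensable.

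Finally, the simple joint $\mathscr{Y}$--$p_1(\Delta)$ spectrum (sixth bullet) follows once the first bullet (with $F=p_1$) and bullet two are in place: by Theorem~\ref{weight basis theorem} the $\widetilde E_{(\mu|\lambda)}$ are then a basis of joint eigenvectors for the commuting family $\{\mathscr{Y}_i\}_{i\ge1}\cup\{p_1(\Delta)\}$, so simplicity amounts to showing the joint eigenvalue determines $(\mu|\lambda)$. From the explicit $\mathscr{Y}$-weight $\bigl(\widetilde{\alpha}_{(\mu|\lambda)}(i)\bigr)_{i\ge1}$ one recovers $\mu$ outright: the indices with nonzero entry are exactly $\{\,i\le\ell(\mu):\mu_i\ne0\,\}$, whose maximum is $\ell(\mu)$ because $\mu$ is reduced (and the empty composition is the unique $\mu$ with all entries vanishing), while the exponent of $q$ in $\widetilde{\alpha}_{(\mu|\lambda)}(i)$ equals $\mu_i$. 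The $p_1(\Delta)$-eigenvalue $\kappa_{\sort(\mu*\lambda)}(q,t)=\sum_{i\ge1}q^{(\sort(\mu*\lambda))_i}t^i$ determines the weakly decreasing sequence $\sort(\mu*\lambda)$, hence the multiset $\mu*\lambda$; removing the now-known multiset of $\mu$ leaves the multiset of $\lambda$, which determines the partition $\lambda$. Hence distinct pairs $(\mu|\lambda)$ give distinct joint eigenvalues and the joint spectrum is simple.
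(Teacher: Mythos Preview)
The paper does not prove this theorem at all: it is quoted verbatim from \cite{BW_Delta} and used as a black box, so there is no ``paper's own proof'' to compare your attempt against. Even the operator $\widetilde{\pi}$ appearing in the fifth bullet is never defined in the present paper; it is imported notation from the cited source. Your task, as stated, therefore has no counterpart here.

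That said, as a sketch of how the result in \cite{BW_Delta} might be established, your outline is broadly reasonable for bullets one through four and six. The first bullet via finite-rank eigenvalues plus convergence, the commutation bullets via simultaneous diagonalization (for $\mathscr{Y}_i$ and $G(\Delta)$) or via centrality in the affine Hecke algebra at finite rank followed by a limit (for $\mathsf{T}_i$), and the simple-spectrum argument recovering $\mu$ from the $\mathscr{Y}$-weights and then $\lambda$ from $\kappa_{\sort(\mu*\lambda)}$, are all sound in spirit. The fifth bullet is where your proposal is genuinely incomplete: you correctly identify that tracking the projector $\rho$ and the $t$-adic convergence through the cyclic-shift relation for $\omega_n^{-1}$ is delicate, but you do not actually carry this out, and ``the cyclic shift of the spectral alphabet becomes trivial in the stable limit'' is an assertion, not an argument. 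Since $\widetilde{\pi}$ is not even defined in this paper, any honest proof of that bullet would first require importing its precise definition from \cite{Ion_2022} or \cite{BW_Delta} and then doing the limit bookkeeping you allude to.
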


It will be convenient to renormalize these $\Delta$-operators in order to align their action with their geometric analogues.

\begin{defn}[\cite{BW_Delta}]
    For $m \geq 1$ define the operator $\Delta_{p_{m}}$ on $\mathscr{P}_{as}^{+}$ as
    $$\Delta_{p_{m}}:= \frac{1}{q^m-1}\left( t^{-m}p_{m}(\Delta) - \frac{1}{1-t^m} \right).$$
\end{defn}

 Importantly, the operators $F(\Delta)$, and therefore $\Delta_{p_m}$, restrict to operators on each subspace $\K[x_1,\ldots,x_k]^+\otimes \Lambda(\mathfrak{X}_k).$ Furthermore, these operators act diagonally on the $\widetilde{E}_{(\mu|\lambda)}$ basis.

 \begin{lem}[\cite{BW_Delta}]
 \begin{equation}\label{eq: delta spectrum 2}
     \Delta_{p_m}\widetilde{E}_{(\mu|\lambda)} = \left( \sum_{i \geq 1}\left(\frac{q^{m~\sort(\mu*\lambda)_i}-1}{q^m-1}\right)t^{m(i-1)} \right)\widetilde{E}_{(\mu|\lambda)}
 \end{equation}
    
 \end{lem}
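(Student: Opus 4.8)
The plan is to read the eigenvalue of $\Delta_{p_m}$ on $\widetilde{E}_{(\mu|\lambda)}$ directly off the eigenvalue of $p_m(\Delta)$ already recorded in Theorem~\ref{higher delta operators thm}. Writing $\nu=\sort(\mu*\lambda)$, the first bullet of that theorem applied to the symmetric function $F=p_m$ gives
$$p_m(\Delta)\,\widetilde{E}_{(\mu|\lambda)} = p_m\big(\kappa_{\nu}(q,t)\big)\,\widetilde{E}_{(\mu|\lambda)},$$
so everything reduces to evaluating the scalar $p_m(\kappa_\nu(q,t))$ and feeding it through the definition of $\Delta_{p_m}$.

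First I would unwind $\kappa_\nu(q,t)=\sum_{i\ge 1}q^{\nu_i}t^i$ as an alphabet: the power sum $p_m$ acts on it termwise, so $p_m(\kappa_\nu(q,t))=\sum_{i\ge 1}(q^{\nu_i}t^i)^m=\sum_{i\ge 1}q^{m\nu_i}t^{mi}$, and hence $t^{-m}p_m(\kappa_\nu(q,t))=\sum_{i\ge 1}q^{m\nu_i}t^{m(i-1)}$. Since $\nu$ has only finitely many nonzero parts, $q^{m\nu_i}-1=0$ for all large $i$, so $\sum_{i\ge 1}(q^{m\nu_i}-1)t^{m(i-1)}$ is a finite sum and lies in $\K$; peeling it off and summing the remaining geometric series yields
$$t^{-m}p_m(\kappa_\nu(q,t)) = \sum_{i\ge 1}(q^{m\nu_i}-1)\,t^{m(i-1)} + \frac{1}{1-t^m}.$$

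Then I would substitute into $\Delta_{p_m}=\tfrac{1}{q^m-1}\big(t^{-m}p_m(\Delta)-\tfrac{1}{1-t^m}\big)$: the constant $\tfrac{1}{1-t^m}$ cancels exactly against the geometric tail, leaving
$$\Delta_{p_m}\widetilde{E}_{(\mu|\lambda)} = \frac{1}{q^m-1}\Big(\sum_{i\ge 1}(q^{m\nu_i}-1)\,t^{m(i-1)}\Big)\widetilde{E}_{(\mu|\lambda)} = \Big(\sum_{i\ge 1}\frac{q^{m\nu_i}-1}{q^m-1}\,t^{m(i-1)}\Big)\widetilde{E}_{(\mu|\lambda)},$$
which is the claimed formula with $\nu_i=\sort(\mu*\lambda)_i$. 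Because $p_m(\Delta)$—and therefore $\Delta_{p_m}$—restricts to each subspace $\K[x_1,\dots,x_k]^{+}\otimes\Lambda(\mathfrak{X}_k)$ by the discussion immediately preceding the lemma, the same identity holds there as stated.

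There is no serious obstacle here: once Theorem~\ref{higher delta operators thm} is available, the proof is essentially a one-line manipulation. The only point demanding care is the bookkeeping of the infinite sums—specifically, recognizing that the constant $\tfrac{1}{1-t^m}$ built into the normalization of $\Delta_{p_m}$ is exactly the image under $t^{-m}p_m$ of the infinitely many zero parts of $\nu$, which is precisely what makes the resulting eigenvalue a rational expression attached to a finite sum. Accordingly, the "hard part," such as it is, amounts to pinning down the plethystic convention for $F(\kappa_\nu)$ so that $p_m(\kappa_\nu)=\sum_i q^{m\nu_i}t^{mi}$ is justified rather than merely asserted.
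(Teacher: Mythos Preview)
Your argument is correct. The paper does not supply its own proof of this lemma---it is simply quoted from \cite{BW_Delta}---so there is nothing to compare against; your derivation from Theorem~\ref{higher delta operators thm} and the definition of $\Delta_{p_m}$ is exactly the intended computation.
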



 The operators $\Delta_{p_m}$ on each subspace $\K[x_1,\ldots,x_k]^+\otimes \Lambda(\mathfrak{X}_k) \subset \mathscr{P}_{as}^{+}$ exactly intertwine the corresponding operators on each $V_{k}.$

\begin{thm}[\cite{BW_Delta}]
For all $k \geq 0$
    $$\Xi_k \Delta_{p_m} = \Delta_{p_{m}} \Xi_k.$$
\end{thm}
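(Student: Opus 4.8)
The plan is to establish the intertwining relation $\Xi_k \Delta_{p_m} = \Delta_{p_m} \Xi_k$ by reducing to the already-known intertwining $\Xi_k \mathscr{Y}_i = qt\,\zz_i\,\Xi_k$ together with the characterization of the $\Delta$-operators on $V_\bullet$ via the relations of Definition~\ref{ext B qt relations}. The key observation is that both sides of the desired identity are determined by how they interact with the other $\B_{t,q}^{\text{ext}}$-generators and by a normalization on a spanning set, so it suffices to check compatibility with $\dd_-$, $\dd_+$, the $\TT_i$, and the $\zz_i$, plus a base case.

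First I would record that $\Xi_k$ is a linear isomorphism from $\K[x_1,\dotsc,x_k]^+\otimes\Lambda(\mathfrak{X}_k)$ onto $V_k$, and that under $\Xi_\bullet$ the operators $\mathsf{T}_i$ correspond to $\TT_i$ (used already in the proof of the corollary after Proposition~\ref{prop: limit cherednik action explicit}) and $qt\,\zz_i$ corresponds to $\mathscr{Y}_i$. Next, using the diagonal formula \eqref{eq: delta spectrum 2} for $\Delta_{p_m}$ on the basis $\widetilde{E}_{(\mu|\lambda)}$ and the spectral formula of Theorem~\ref{weight basis theorem} for $\mathscr{Y}_i$, I would compare eigenvalues: each $\widetilde{E}_{(\mu|\lambda)}$ is a joint $\mathscr{Y}$-eigenvector, its image $\Xi_k(\widetilde{E}_{(\mu|\lambda)})$ is a joint $\zz$-eigenvector with eigenvalues scaled by $(qt)^{-1}$, and the $\Delta_{p_m}$-eigenvalue of $\widetilde{E}_{(\mu|\lambda)}$ depends only on $\sort(\mu*\lambda)$. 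On the $V_\bullet$ side, by \eqref{eq: delta spectrum 1} (transported through $\Phi$) the operator $\Delta_{p_m}$ acts on the image of the corresponding fixed-point class by $B_\mu(t^m,q^m)$. So the heart of the argument is a bookkeeping identity: the eigenvalue $\sum_{i\ge 1}\frac{q^{m\,\sort(\mu*\lambda)_i}-1}{q^m-1}t^{m(i-1)}$ must match the appropriate diagram sum $B_\nu(t^m,q^m)$ for the partition $\nu = \sort(\gamma_1+1,\dotsc,\gamma_k+1,\lambda)$ that indexes the matching fixed point under $\phi$; this is exactly the combinatorial translation between the "partition read by rows with $q$-content" and the "$\kappa$-type" sums, and it is a finite computation once one uses $\sum_{j=0}^{a-1}q^{mj} = \frac{q^{ma}-1}{q^m-1}$ to expand each column.

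The cleanest route, and the one I would actually write, avoids re-deriving eigenvalues from scratch: since $V_\bullet$ is generated by $1\in V_0$ as a $\B_{t,q}$-module (Lemma~\ref{B qt poly rep generation}) and $\Delta_{p_m}$ is the \emph{unique} extension with $\Delta_{p_m}(1)=0$ satisfying Definition~\ref{ext B qt relations} (Proposition~\ref{uniqueness of delta operators}), it suffices to show that $\Xi_\bullet \Delta_{p_m} \Xi_\bullet^{-1}$ satisfies those same defining properties on $V_\bullet$. Commutation with $\TT_i$ and $\zz_i$ transports directly from $[\Delta_{p_m},\mathsf{T}_i]=[\Delta_{p_m},\mathscr{Y}_i]=0$ in Theorem~\ref{higher delta operators thm} via the intertwiners $\Xi_k\mathsf{T}_i=\TT_i\Xi_k$ and $\Xi_k\mathscr{Y}_i = qt\,\zz_i\Xi_k$; commutation with $\dd_-$ and the twisted commutation $[\Delta_{p_m},\dd_+]=\zz_1^m\dd_+$ follow from the last bullet of Theorem~\ref{higher delta operators thm} (the relation $\widetilde{\pi}F(\Delta)=F(\Delta+(q^{-1}-1)\mathscr{Y}_1)\widetilde{\pi}$, after expanding $F=p_m$ and rescaling into $\Delta_{p_m}$, yields precisely the $\zz_1^m$-twist) once one matches $\widetilde{\pi}$ and the raising maps on $\mathscr{P}_{as}^+$ with $\dd_\pm$ on $V_\bullet$. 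Finally the base case $\Delta_{p_m}(1)=0$ holds on both sides since $\Xi_0$ carries $1\mapsto 1$ and $\Delta_{p_m}$ kills constants on $\mathscr{P}_{as}^+$. By uniqueness the two operators agree.

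The main obstacle is the $\dd_+$ relation: unlike $\TT_i$ and $\zz_i$, which intertwine cleanly, $\dd_+$ is a \emph{raising} operator between different $k$-levels, and the identification of $\dd_\pm$ on $V_\bullet$ with the maps $\widetilde{\pi}$ (and its adjoint) on $\mathscr{P}_{as}^+$ is not completely transparent from the intertwiner $\Xi_k\mathscr{Y}_i = qt\,\zz_i\Xi_k$ alone — one needs the explicit plethystic formula for $\dd_+$ on $V_k$ together with the corresponding formula for the raising map on almost symmetric functions, and then verify that the $(q^{-1}-1)\mathscr{Y}_1$ shift in Theorem~\ref{higher delta operators thm} rescales under the passage $p_m(\Delta)\rightsquigarrow\Delta_{p_m}$ exactly to the geometric twist $\zz_1^m$. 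If this level-raising compatibility is genuinely delicate, the fallback is the direct eigenvalue comparison of the second paragraph: diagonalize both sides on the $\widetilde{E}$-basis and check the combinatorial identity $\sum_{i\ge1}\frac{q^{m\,\sort(\mu*\lambda)_i}-1}{q^m-1}t^{m(i-1)} = B_{\sort(\gamma_1+1,\dotsc,\gamma_k+1,\lambda)}(t^m,q^m)$ under $\phi$, which is elementary but must be done carefully with the content conventions.
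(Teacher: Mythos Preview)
The paper does not give its own proof of this theorem; it is cited from \cite{BW_Delta} and used as a black box. So there is no in-paper argument to compare against, and your proposal must be judged on its own merits.

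Your primary route (transport the $\mathscr{P}_{as}^+$ operator through $\Xi_\bullet$ and invoke the uniqueness of Proposition~\ref{uniqueness of delta operators}) is the natural strategy, and the pieces involving $\TT_i$ and $\zz_i$ go through as you say. But the key relation $[\Delta_{p_m},\dd_+]=\zz_1^m\dd_+$ is not verifiable from what this paper records: you would need that $\Xi_\bullet$ intertwines $\dd_+$ with some explicit raising operator on $\mathscr{P}_{as}^+$ (your $\widetilde{\pi}$), and that identification is never established here---nor is $\widetilde{\pi}$ even defined in this paper beyond its appearance in the list of Theorem~\ref{higher delta operators thm}. The same issue arises for $\dd_-$: Lemma~\ref{lem:deld} gives $\Xi_k\partial_- = -\dd_-\Xi_k$, but $[\Delta_{p_m}^{\mathscr{P}},\partial_-]=0$ is not among the properties recorded in Theorem~\ref{higher delta operators thm}. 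You correctly flag this as the obstacle; it is a genuine gap that cannot be filled without importing more from \cite{BW_Delta} or \cite{Ion_2022}.

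Your fallback eigenvalue comparison is \emph{circular} in the logical flow of this paper. To ``diagonalize both sides on the $\widetilde{E}$-basis'' you need to know that $\Xi_k(\widetilde{E}_{(\gamma+1|\lambda)})$ is an eigenvector of $\Delta_{p_m}$ on $V_k$. The only eigenvectors of $\Delta_{p_m}$ on $V_k$ that the paper identifies are the $\Phi(H_{\mu,w})$, and the statement that $\Xi_k(\widetilde{E}_{(\gamma+1|\lambda)})$ is proportional to one of these is exactly Corollary~\ref{cor:beta}, whose proof (via Theorem~\ref{weights match thm}) uses the very intertwining $\Xi_k\Delta_{p_m}=\Delta_{p_m}\Xi_k$ you are trying to prove. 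Matching eigenvalues is not enough: you must first know the images $\Xi_k(\widetilde{E})$ lie in the correct eigenspaces, and that is precisely what is at stake.
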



From here we immediately see the following result:

\begin{cor}[\cite{MBWArxiv},\cite{BW_Delta}]
    For all $ k \geq 0$ the  $\mathscr{Y}_1,\ldots, \mathscr{Y}_{k}, \Delta_{p_1}$ spectrum on $\K[x_1,\ldots,x_k]^+\otimes \Lambda(\mathfrak{X}_k)$ is simple. Furthermore, for all $m \geq 1$
    $$\Delta_{p_m}\widetilde{E}_{(\mu|\lambda)} = B_{\sort(\mu*\lambda)}(t^m,q^m) \widetilde{E}_{(\mu|\lambda)}. $$
\end{cor}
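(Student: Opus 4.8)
The plan is to read off both statements from the fact that the functions $\widetilde{E}_{(\mu|\lambda)}$ form a simultaneous eigenbasis for the operators $\mathscr{Y}_1,\dotsc,\mathscr{Y}_k$ and $\Delta_{p_m}$ on the relevant subspace, and then to reduce the simplicity assertion to the global statement in Theorem~\ref{higher delta operators thm}.

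\textbf{Step 1: the eigenbasis of $\K[x_1,\dotsc,x_k]^{+}\otimes\Lambda(\mathfrak{X}_k)$.} First I would check that $\Xi_k$ is a $\K$-linear isomorphism onto $V_k$: by its defining formula it sends a monomial basis of $\K[x_1,\dotsc,x_k]^{+}\otimes\Lambda(\mathfrak{X}_k)$ to the products of $y_1^{a_1}\cdots y_k^{a_k}$ with the image of $\Lambda$ under the plethystic substitution $F\mapsto F(X/(t-1))$, which is invertible over $\K$. By Proposition~\ref{Calculations of eigenvectors}, $\Xi_k$ carries each $\widetilde{E}_{(\gamma_1+1,\dotsc,\gamma_k+1|\lambda)}$ (for $\gamma\in\Z_{\ge 0}^k$, $\lambda\in\Y$) to a nonzero scalar multiple of $\big(w_0^{(k)}T_{w_0^{(k)}}\mathcal{J}_{(\lambda|\gamma)}(\tfrac{X}{t^{-1}-1}\mid y)\big)^{*}$; since the $\mathcal{J}_{(\lambda|\gamma)}$ form a basis of $V_k$ and $w_0^{(k)}T_{w_0^{(k)}}$ and the involution $(-)^{*}$ are invertible, these images form a basis of $V_k$, so the $\widetilde{E}_{(\gamma_1+1,\dotsc,\gamma_k+1|\lambda)}$ form a basis of $\K[x_1,\dotsc,x_k]^{+}\otimes\Lambda(\mathfrak{X}_k)$ --- equivalently, the $\widetilde{E}_{(\mu|\lambda)}$ with $\ell(\mu)=k$ and every part of $\mu$ positive. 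By Theorem~\ref{weight basis theorem} each is an eigenvector of $\mathscr{Y}_1,\dotsc,\mathscr{Y}_k$, and by \eqref{eq: delta spectrum 2} an eigenvector of every $\Delta_{p_m}$; in particular these operators all preserve the subspace.

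\textbf{Step 2: simplicity.} Let $\widetilde{E}_{(\mu|\lambda)}$ and $\widetilde{E}_{(\mu'|\lambda')}$ lie in this subspace and have the same $\mathscr{Y}_1,\dotsc,\mathscr{Y}_k$- and $\Delta_{p_1}$-eigenvalues. Because $\ell(\mu)=\ell(\mu')=k$, the case distinction in Theorem~\ref{weight basis theorem} gives $\widetilde{\alpha}_{(\mu|\lambda)}(i)=\widetilde{\alpha}_{(\mu'|\lambda')}(i)=0$ for all $i>k$; hence the two functions have equal $\mathscr{Y}_i$-eigenvalues for \emph{all} $i\ge 1$. Moreover $\Delta_{p_1}=\tfrac{1}{q-1}\big(t^{-1}p_1(\Delta)-\tfrac{1}{1-t}\big)$ is an invertible affine-linear function of $p_1(\Delta)$, so they have the same $p_1(\Delta)$-eigenvalue as well. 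The simplicity of the joint $\mathscr{Y}$--$p_1(\Delta)$ spectrum on $\mathscr{P}_{as}^{+}$ (last bullet of Theorem~\ref{higher delta operators thm}) then forces $(\mu|\lambda)=(\mu'|\lambda')$. Since the $\widetilde{E}$'s in question are a basis of $\K[x_1,\dotsc,x_k]^{+}\otimes\Lambda(\mathfrak{X}_k)$, the joint $\mathscr{Y}_1,\dotsc,\mathscr{Y}_k,\Delta_{p_1}$ spectrum there is simple.

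\textbf{Step 3: the eigenvalue formula.} Finally, set $\sigma=\sort(\mu*\lambda)$. By \eqref{eq: delta spectrum 2}, $\Delta_{p_m}$ acts on $\widetilde{E}_{(\mu|\lambda)}$ by the scalar $\sum_{i\ge 1}\tfrac{q^{m\sigma_i}-1}{q^m-1}\,t^{m(i-1)}$; expanding each geometric series as $1+q^m+\dotsb+q^{m(\sigma_i-1)}$ and collecting terms box by box over $\sigma$ rewrites this as the diagram sum $B_{\sigma}(t^m,q^m)$, which is the assertion (the $m=1$ instance being the $\Delta_{p_1}$-eigenvalue used in Step 2). I expect the only delicate point to be the precise identification in Step 1 of which $\widetilde{E}_{(\mu|\lambda)}$ span $\K[x_1,\dotsc,x_k]^{+}\otimes\Lambda(\mathfrak{X}_k)$ --- in particular that they all have $\ell(\mu)=k$, which is exactly what makes the reduction in Step 2 from $\{\mathscr{Y}_i\}_{i\ge 1}$ to $\mathscr{Y}_1,\dotsc,\mathscr{Y}_k$ valid; everything else is a direct assembly of the cited results of \cite{MBWArxiv} and \cite{BW_Delta}.
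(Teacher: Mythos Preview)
Your proof is correct and follows the same route the paper indicates: both assertions are read off from the cited results of \cite{MBWArxiv} and \cite{BW_Delta} as summarized in Theorem~\ref{weight basis theorem}, Theorem~\ref{higher delta operators thm}, and \eqref{eq: delta spectrum 2}, with the eigenvalue formula obtained by expanding the geometric series and the simplicity on the subspace reduced to the global simplicity in Theorem~\ref{higher delta operators thm}. One minor simplification: in Step~1 you could cite Proposition~\ref{prop:tEP} (or Corollary~\ref{cor:tEJ}) instead of Proposition~\ref{Calculations of eigenvectors} to identify the basis of $\K[x_1,\dotsc,x_k]^{+}\otimes\Lambda(\mathfrak{X}_k)$, thereby avoiding the detour through the Concha--Lapointe identity.
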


From \eqref{eq: delta spectrum 1} and \eqref{eq: delta spectrum 2} along with Corollary \ref{weights match thm}, we conclude the following result.

\begin{thm}\label{weights match thm}
    Whenever $\phi(\mu, w) = (\lambda|\gamma),$ $\widetilde{H}_{(\lambda|\gamma)} \in V_{k}$ has the same $\zz_1,\ldots,\zz_k,\Delta_{p_1}$ weight as $H_{\mu,w} \in U_{k}.$ 
\end{thm}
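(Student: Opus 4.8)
The plan is to assemble this result from the pieces already established in the two preceding subsections. The statement splits into two independent claims: that the $\zz_1,\dotsc,\zz_k$ weights of $\widetilde{H}_{(\lambda|\gamma)}$ and $H_{\mu,w}$ agree, and that the $\Delta_{p_1}$ eigenvalues agree. The first claim is already isolated as the final corollary of the $z$-Action subsection, so I would simply cite it. The substance therefore lies entirely in matching the $\Delta_{p_1}$ eigenvalues, and here the strategy is a diagram chase through the isomorphisms $\Xi_k$ and $\Phi$.

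First I would recall that on the geometric side, \eqref{eq: delta spectrum 1} gives $\Delta_{p_m} H_{\mu,w} = B_{\mu}(t^m,q^m) H_{\mu,w}$, where $\mu = \sort(\gamma_1+1,\dotsc,\gamma_k+1,\lambda)$ by the description of the bijection $\phi$. On the almost-symmetric side, the Corollary just before the theorem states $\Delta_{p_m} \widetilde{E}_{(\mu'|\lambda')} = B_{\sort(\mu'*\lambda')}(t^m,q^m)\widetilde{E}_{(\mu'|\lambda')}$; applying this with $\mu' = (\gamma_1+1,\dotsc,\gamma_k+1)$ and $\lambda' = \lambda$ yields eigenvalue $B_{\sort(\gamma_1+1,\dotsc,\gamma_k+1,\lambda)}(t^m,q^m) = B_\mu(t^m,q^m)$ — exactly the same scalar. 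Next I would invoke the theorem $\Xi_k \Delta_{p_m} = \Delta_{p_m}\Xi_k$ to transport this eigenvalue identity along $\Xi_k$: since $\Xi_k(\widetilde{E}_{(\gamma_1+1,\dotsc,\gamma_k+1|\lambda)})$ is, by Proposition~\ref{Calculations of eigenvectors}, a nonzero scalar multiple of $\left(w_0^{(k)}T_{w_0^{(k)}}\mathcal{J}_{(\lambda|\gamma)}(\tfrac{X}{t^{-1}-1}\mid y)\right)^{*}$, which in turn is a nonzero scalar multiple of $\widetilde{H}_{(\lambda|\gamma)}$ by Definition~\ref{modified partially sym MacD function}, we conclude $\Delta_{p_1}\widetilde{H}_{(\lambda|\gamma)} = B_\mu(t,q)\widetilde{H}_{(\lambda|\gamma)}$, matching the geometric eigenvalue. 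Combining with the $\zz_i$-weight agreement completes the proof.

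The one genuine subtlety — and the step I would be most careful about — is that $\Xi_k$ is only stated to intertwine $\Delta_{p_m}$ acting on $\K[x_1,\dotsc,x_k]^+\otimes\Lambda(\mathfrak{X}_k)$ with $\Delta_{p_m}$ on $V_k$, so I need $\widetilde{E}_{(\gamma_1+1,\dotsc,\gamma_k+1|\lambda)}$ to actually lie in that subspace; this holds precisely because each entry $\gamma_i+1$ is nonzero, i.e. the composition $(\gamma_1+1,\dotsc,\gamma_k+1)$ is reduced of length $k$, so that $\widetilde{E}_{(\gamma_1+1,\dotsc,\gamma_k+1|\lambda)} \in \K[x_1,\dotsc,x_k]^+\otimes\Lambda(\mathfrak{X}_k)$ and $\Xi_k$ applies. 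A secondary point worth stating cleanly is that eigenvalue-matching suffices at all: this is fine because we are only asserting equality of weights, not equality of vectors, and the scalars relating $\widetilde{E}$, $\left(w_0 T_{w_0}\mathcal J\right)^*$, and $\widetilde{H}$ are all nonzero (the $j_{(\lambda|\gamma)}$ and $\prod(1-t^j)$ factors are units in $\K$ and the $t$-powers are obviously nonzero), so all three vectors are simultaneous $\Delta_{p_m}$-eigenvectors with the same eigenvalue. Finally I would remark that the argument works verbatim for every $m\ge 1$, not just $m=1$, although only $m=1$ is needed for the simple-spectrum argument downstream; stating it for $m=1$ keeps the hypotheses of the later uniqueness results exactly as needed.

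\begin{proof}
The $\zz_1,\dotsc,\zz_k$-weights agree by the final corollary of the preceding subsection. It remains to compare $\Delta_{p_1}$-eigenvalues. Write $\mu=\sort(\gamma_1+1,\dotsc,\gamma_k+1,\lambda)$, as dictated by $\phi(\mu,w)=(\lambda|\gamma)$. On the geometric side, \eqref{eq: delta spectrum 1} gives
$$\Delta_{p_1}H_{\mu,w}=B_{\mu}(t,q)\,H_{\mu,w}.$$
On the other side, the composition $(\gamma_1+1,\dotsc,\gamma_k+1)$ is reduced of length $k$, so $\widetilde{E}_{(\gamma_1+1,\dotsc,\gamma_k+1|\lambda)}$ lies in $\K[x_1,\dotsc,x_k]^+\otimes\Lambda(\mathfrak{X}_k)$ and, by the Corollary preceding this theorem,
$$\Delta_{p_1}\widetilde{E}_{(\gamma_1+1,\dotsc,\gamma_k+1|\lambda)}=B_{\sort(\gamma_1+1,\dotsc,\gamma_k+1,\lambda)}(t,q)\,\widetilde{E}_{(\gamma_1+1,\dotsc,\gamma_k+1|\lambda)}=B_{\mu}(t,q)\,\widetilde{E}_{(\gamma_1+1,\dotsc,\gamma_k+1|\lambda)}.$$
Since $\Xi_k\Delta_{p_1}=\Delta_{p_1}\Xi_k$, the vector $\Xi_k(\widetilde{E}_{(\gamma_1+1,\dotsc,\gamma_k+1|\lambda)})\in V_k$ is also a $\Delta_{p_1}$-eigenvector with eigenvalue $B_{\mu}(t,q)$. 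By Proposition~\ref{Calculations of eigenvectors} this vector is a nonzero scalar multiple of $\left(w_0^{(k)}T_{w_0^{(k)}}\mathcal{J}_{(\lambda|\gamma)}\left(\tfrac{X}{t^{-1}-1}\mid y\right)\right)^{*}$, which by Definition~\ref{modified partially sym MacD function} is a nonzero scalar multiple of $\widetilde{H}_{(\lambda|\gamma)}$. Hence
$$\Delta_{p_1}\widetilde{H}_{(\lambda|\gamma)}=B_{\mu}(t,q)\,\widetilde{H}_{(\lambda|\gamma)},$$
which matches the geometric eigenvalue above. Combined with the agreement of $\zz_i$-weights, this shows $\widetilde{H}_{(\lambda|\gamma)}$ and $H_{\mu,w}$ have the same $\zz_1,\dotsc,\zz_k,\Delta_{p_1}$ weight.
\end{proof}
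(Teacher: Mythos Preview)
Your proof is correct and follows exactly the route the paper sketches: the paper simply asserts the result ``From \eqref{eq: delta spectrum 1} and \eqref{eq: delta spectrum 2} along with [the $\zz_i$-weight corollary], we conclude the following result,'' and you have faithfully unpacked that sentence, including the transport along $\Xi_k$ via $\Xi_k\Delta_{p_m}=\Delta_{p_m}\Xi_k$ and the identification of $\Xi_k(\widetilde{E}_{(\gamma_1+1,\dotsc,\gamma_k+1|\lambda)})$ with a nonzero multiple of $\widetilde{H}_{(\lambda|\gamma)}$. Your attention to the domain of $\Xi_k$ is a genuine clarification the paper leaves implicit.
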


The fact that the joint $\Delta_{p_1},\zz_i$-weights of $H_{\mu,w}$ and $\widetilde{H}_{(\lambda|\gamma)}$ agree whenever $\phi(\mu, w) = (\lambda|\gamma)$ is sufficient to characterize the images $\Phi(H_{\mu,w})$ up to nonzero scalars.

\begin{cor}\label{cor:beta}
    There exist scalars $\beta_{\mu,w} \neq 0$ such that whenever $\phi(\mu,w) = (\lambda|\gamma)$, $$\Phi(H_{\mu,w}) = \beta_{\mu,w} \widetilde{H}_{(\lambda|\gamma)}.$$
\end{cor}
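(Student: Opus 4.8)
The plan is to deduce Corollary~\ref{cor:beta} purely from the simplicity of the joint $\Delta_{p_1},\zz_1,\dotsc,\zz_k$ spectrum together with the matching of weights established in Theorem~\ref{weights match thm}. First I would record the structural input: the isomorphism $\Phi_\bullet : U_\bullet \to V_\bullet$ is an isomorphism of $\B_{t,q}$-modules, and in fact of $\B_{t,q}^{\mathrm{ext}}$-modules, since the $\Delta_{p_m}$ operators on $U_\bullet$ are \emph{defined} (via Proposition~\ref{uniqueness of delta operators} and Lemma~\ref{B qt poly rep generation}) to be the transports of the geometric operators \eqref{eq: delta spectrum 1}, so $\Phi \circ \Delta_{p_m} = \Delta_{p_m} \circ \Phi$ and likewise $\Phi \circ \zz_i = \zz_i \circ \Phi$. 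Consequently $\Phi(H_{\mu,w})$ is a joint eigenvector in $V_k$ for $\Delta_{p_1},\zz_1,\dotsc,\zz_k$ with the same eigenvalues that $H_{\mu,w}$ has in $U_k$, namely $B_\mu(t,q)$ for $\Delta_{p_1}$ and the weights computed in Corollary~\ref{eigenvector result} for the $\zz_i$.

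Next I would invoke Theorem~\ref{weights match thm}: when $\phi(\mu,w)=(\lambda|\gamma)$, the element $\widetilde{H}_{(\lambda|\gamma)}\in V_k$ is itself a joint $\Delta_{p_1},\zz_1,\dotsc,\zz_k$-eigenvector with \emph{exactly} these same eigenvalues. So both $\Phi(H_{\mu,w})$ and $\widetilde{H}_{(\lambda|\gamma)}$ lie in the same joint eigenspace. Now I would apply the simplicity statement for the joint $\zz_i,\Delta_{p_1}$ spectrum on $V_k$ coming from \cite{gonzález2023calibrated} (transported, if desired, along $\Phi$ from the simplicity of the $\mathscr{Y}_i,\Delta_{p_1}$ spectrum on $\K[x_1,\dotsc,x_k]^+\otimes\Lambda(\mathfrak{X}_k)$ via the last Corollary before Theorem~\ref{weights match thm}, together with the intertwining $\Xi_k\mathscr{Y}_i = qt\,\zz_i\,\Xi_k$ and $\Xi_k\Delta_{p_m} = \Delta_{p_m}\Xi_k$): each such joint eigenspace is one-dimensional. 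Therefore $\Phi(H_{\mu,w})$ and $\widetilde{H}_{(\lambda|\gamma)}$ are proportional, say $\Phi(H_{\mu,w}) = \beta_{\mu,w}\widetilde{H}_{(\lambda|\gamma)}$ for some scalar $\beta_{\mu,w}\in\K$.

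Finally I would argue $\beta_{\mu,w}\ne 0$. Since $\Phi$ is an isomorphism and $H_{\mu,w}$ is (up to the nonzero normalizing scalar $(-1)^{|\mu|}t^{n(\mu')}q^{n(\mu)}$) a fixed-point basis element, $\Phi(H_{\mu,w})\ne 0$; on the other hand $\widetilde{H}_{(\lambda|\gamma)}\ne 0$ by Definition~\ref{modified partially sym MacD function}, because it is obtained from $\mathcal{J}_{(\lambda|\gamma)}$ by invertible operations (the nonzero scalar prefactor, the invertible operators $w_0^{(k)}T_{w_0^{(k)}}$, the plethystic substitution $X\mapsto X/(t^{-1}-1)$, and the coefficient involution $*$) and $\mathcal{J}_{(\lambda|\gamma)} = j_{(\lambda|\gamma)}P_{(\lambda|\gamma)}\ne 0$. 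Hence $\beta_{\mu,w}\ne 0$.

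The only real subtlety — and the main thing to get right rather than a genuine obstacle — is to make sure the simplicity of the joint spectrum is invoked on the correct space and for the correct commuting family: one must use that $\Delta_{p_1}$ and the $\zz_i$ genuinely commute on $V_k$ and that the bundle of eigenvalues $(B_\mu(t,q);\text{the }\zz\text{-weights})$ separates the fixed points $I_{\mu,w}$ in $\PFH_{n,n-k}$. The former is part of Definition~\ref{ext B qt relations} ($[\Delta_{p_m},\zz_i]=0$); the latter is exactly the content of Theorem~\ref{weights match thm} combined with the simplicity assertion, so no extra work is needed here. I would phrase the proof so that it cites Theorem~\ref{weights match thm}, the $\B_{t,q}^{\mathrm{ext}}$-equivariance of $\Phi$, and the simple-spectrum statement, and then reads off proportionality and nonvanishing in two lines.
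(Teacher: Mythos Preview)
Your proposal is correct and follows essentially the same approach as the paper's own proof: both argue that $\Phi$ is $\B_{t,q}^{\mathrm{ext}}$-equivariant, hence $\Phi(H_{\mu,w})$ is a nonzero joint $\Delta_{p_1},\zz_i$-weight vector whose weight matches that of $\widetilde{H}_{(\lambda|\gamma)}$ by Theorem~\ref{weights match thm}, and then invoke simplicity of the joint spectrum to conclude proportionality. Your additional remarks on why $\widetilde{H}_{(\lambda|\gamma)}\neq 0$ and on the origin of the simple-spectrum statement are sound elaborations of points the paper leaves implicit.
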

\begin{proof}

The map $\Phi$ is a $\mathbb{B}_{t,q}^{\text{ext}}$-module isomorphism. Therefore, we know that the images $\Phi(H_{\mu,w})$, which must be nonzero, are joint $\Delta_{p_1},\zz_i$-weight vectors. From Theorem \ref{weights match thm}, we know that whenever $\phi(\mu, w) = (\lambda|\gamma),$ $\widetilde{H}_{(\lambda|\gamma)}$ has the same $\zz_1,\ldots,\zz_k,\Delta_{p_1}$-weight. Since the joint $\Delta_{p_1},\zz_i$-weight spaces of $U_{\bullet}$ and $V_{\bullet}$ are simple, it must be that $\Phi(H_{\mu,w}) = \beta_{\mu,w}\widetilde{H}_{(\lambda|\gamma)}$  for some $\beta_{\mu,w} \neq 0$ whenever $\phi(\mu, w) = (\lambda|\gamma).$

\end{proof}

\section{Aligning $\dd_{-}$ and Main Theorem}

In this section, we will show that each of the coefficients $\beta_{\mu,w} = 1$ and in doing so prove our main result, Theorem~\ref{main thm}. We will do so by an inductive argument which will require the following base case:

\begin{lem}
For all $\mu \in \mathbb{Y},$
    $$\beta_{\mu,\varnothing} =1.$$
\end{lem}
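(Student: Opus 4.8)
The plan is to reduce this base case $\beta_{\mu,\varnothing}=1$ to the known $k=0$ statement from \cite{GCM_2017}, namely that $\Phi_0(H_{\mu,\varnothing})=\widetilde{H}_\mu$, where $\widetilde{H}_\mu$ is the usual modified Macdonald function. First I would observe that when $\gamma=\varnothing$ (so $k=0$), the bijection $\phi$ sends $(\mu,\varnothing)$ to $(\lambda|\varnothing)$ with $\lambda=\mu$, and the space $V_0=\Lambda$ carries no $y$-variables, so the operators $w_0^{(0)}$, $T_{w_0^{(0)}}$ are trivial and $\mathrm{inv}(\varnothing)=0$. Thus Definition~\ref{modified partially sym MacD function} collapses to $\widetilde{H}_{(\mu|\varnothing)}(X) = t^{n(\mu)+|\mu|}\,\mathcal{J}_{(\mu|\varnothing)}\!\big(\tfrac{X}{t^{-1}-1}\big)^{*}$, where $\mathcal{J}_{(\mu|\varnothing)}=J_\mu$ is the usual integral-form Macdonald polynomial regarded as a symmetric function. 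The remaining task is purely a matter of matching normalization conventions: I would check that $t^{n(\mu)+|\mu|}\,J_\mu\!\big(\tfrac{X}{t^{-1}-1};q,t^{-1}\big)$ agrees with the standard modified Macdonald function $\widetilde{H}_\mu(X;q,t)$ under the conventions of \cite{GCM_2017} (recalling that the paper swaps the roles of $q$ and $t$ relative to \cite{GCM_2017}). This is the classical plethystic formula $\widetilde{H}_\mu[X;q,t] = t^{n(\mu)}J_\mu[\tfrac{X}{1-t^{-1}};q,t^{-1}]$ up to the bookkeeping of $q\leftrightarrow t$ and a possible overall power of $t$ absorbed into $|\mu|$; one verifies it on, say, $\mu=(1)$ and then invokes the fact that both sides are characterized by the same triangularity and normalization.

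With that identification in hand, the argument is: by Corollary~\ref{cor:beta} we have $\Phi(H_{\mu,\varnothing}) = \beta_{\mu,\varnothing}\,\widetilde{H}_{(\mu|\varnothing)}$ for some nonzero scalar; by the $k=0$ result of \cite{GCM_2017} recalled around \eqref{E:Phi-0} we have $\Phi_0(H_{\mu,\varnothing})=\widetilde{H}_\mu$ exactly; and since $\widetilde{H}_{(\mu|\varnothing)}=\widetilde{H}_\mu$ by the normalization check above, comparing the two forces $\beta_{\mu,\varnothing}=1$. I would phrase this cleanly by noting that $\widetilde{H}_\mu$ appears with coefficient $1$ of the monomial $m_\mu$ (or, more robustly, has a specified value under the specialization $q=t=1$, or a specified coefficient of a particular Schur function), a normalization property that is manifestly preserved, so that the scalar $\beta_{\mu,\varnothing}$ relating two objects with the same such normalization must be $1$.

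The main obstacle I anticipate is not conceptual but notational: pinning down exactly which normalization of the modified Macdonald function is used in \cite{GCM_2017}, and carefully tracking the interplay of the three conventions in play — the $q\leftrightarrow t$ swap, the plethystic substitution $X\mapsto \tfrac{X}{t^{-1}-1}$ versus $\tfrac{X}{1-t^{-1}}$ (a sign $(-1)^{|\mu|}$ which should be matched by the $(-1)^{|\mu|}$ already built into $H_{\mu,w}=(-1)^{|\mu|}t^{n(\mu')}q^{n(\mu)}[I_{\mu,w}]$), and the star operation $F^{*}=F(X;q,t^{-1})$ versus the coefficient involution $\dag$. I would want to do this comparison once, explicitly and with all signs and powers of $q,t$ visible, perhaps checking it against the worked example $\widetilde{H}_{(2,1)}=s_3+(q+t)s_{(2,1)}+qts_{(1,1,1)}$ given earlier in the paper. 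Once the $k=0$ normalization is reconciled, the lemma is immediate from Corollary~\ref{cor:beta} and \cite{GCM_2017}.
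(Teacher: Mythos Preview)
Your proposal is correct and follows essentially the same approach as the paper: identify $\widetilde{H}_{(\mu|\varnothing)}$ with the ordinary modified Macdonald function $\widetilde{H}_\mu$, and then invoke the $k=0$ result $\Phi(H_{\mu,\varnothing})=\widetilde{H}_\mu$ from \cite{GCM_2017} together with Corollary~\ref{cor:beta}. The paper's proof is simply terser, asserting the identification $\widetilde{H}_{(\mu|\varnothing)}=\widetilde{H}_\mu$ as a direct consequence of Definition~\ref{modified partially sym MacD function} without rehearsing the normalization bookkeeping you (rightly, given the multiple conventions in play) flag as the only real obstacle.
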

\begin{proof}
    From Definition \ref{modified partially sym MacD function} we see that $\widetilde{H}_{(\mu|\varnothing)}= \widetilde{H}_{\mu}.$ The map $\Phi$ was shown by Carlsson--Gorsky--Mellit \cite{GCM_2017} to satisfy 
    $$\Phi(H_{\mu,\varnothing}) = \Phi(H_{\mu}) = \widetilde{H}_{\mu}.$$ Therefore, $\Phi(H_{\mu,\varnothing}) = \widetilde{H}_{(\mu|\varnothing)}$ so $\beta_{\mu,\varnothing} =1.$
\end{proof}

We will also require the following observation which says that forgetting a box label in the Carlsson--Gorsky--Mellit combinatorial model
has a simple description on the other side of the bijection $\phi.$
\begin{lem}
    Suppose $\gamma_k +1 \geq \lambda_1$ and that $\phi(\mu,wx) = (\lambda|\gamma).$ Then 
    $$\phi(\mu,w) = (\gamma_k +1,\lambda|\gamma_1,\ldots,\gamma_{k-1}).$$
\end{lem}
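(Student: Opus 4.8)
The plan is to unwind the two combinatorial models directly through the bijection $\phi$ of \cite{GCM_2017}. Recall that $\phi(\mu,wx) = (\lambda|\gamma)$ means that $\mu = \sort(\gamma_1+1,\dotsc,\gamma_k+1,\lambda)$ together with a specific reading of the horizontal strip: the label sequence $wx$ (of length $k$) records the successive removals of boxes from $\mu$, the last removal being the box of $T$-weight $x$, leaving behind the partition associated to $\lambda$ sitting inside $\mu$. Forgetting the last box label $x$ means we stop the removal process one step earlier, so we obtain a flag of ideals of length $k-1$ inside $\PFH_{n,n-(k-1)}$; the new partition ``remainder'' is the one obtained from $\mu$ by removing only the boxes labelled by $w$, i.e.\ the partition whose row-lengths, after re-sorting, come from $(\gamma_1+1,\dotsc,\gamma_{k-1}+1)$ together with the part $\gamma_k+1$ now reincorporated into the symmetric tail. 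Concretely, on the partially symmetric side this should read as $\phi(\mu,w) = (\gamma_k+1,\lambda \,|\, \gamma_1,\dotsc,\gamma_{k-1})$, provided the newly reincorporated part $\gamma_k+1$ is indeed at least as large as every part of $\lambda$, which is exactly the hypothesis $\gamma_k+1 \ge \lambda_1$ ensuring that $(\gamma_k+1,\lambda)$ is again a partition.

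The key steps, in order, would be: (i) recall precisely the definition of $\phi$ from \cite{GCM_2017}/\cite{GO}, in particular how the ordered list $w = (w_1,\dotsc,w_{k-1})$ of $T$-weights is converted into the composition $\gamma$ via the positions of the removed boxes relative to the shape, and how the residual partition becomes $\lambda$; (ii) verify the shape identity: since $\mu = \sort(\gamma_1+1,\dotsc,\gamma_k+1,\lambda)$ by the $k$-box statement, and $\gamma_k+1 \ge \lambda_1$, we have $\sort(\gamma_1+1,\dotsc,\gamma_{k-1}+1,\;\gamma_k+1,\lambda) = \mu$ as well, so $\mu$ is consistent with the claimed value $\phi(\mu,w)=(\gamma_k+1,\lambda\,|\,\gamma_1,\dotsc,\gamma_{k-1})$; (iii) check the compatibility of the box labels: removing the boxes $w_1,\dotsc,w_{k-1}$ from $\mu$ in this order is precisely the $(k-1)$-step prefix of removing $w_1,\dotsc,w_{k-1},x$, so the $T$-weight data and hence the composition $(\gamma_1,\dotsc,\gamma_{k-1})$ is literally unchanged; (iv) conclude that the residual partition after the $(k-1)$ removals is $(\gamma_k+1,\lambda)$ — this is the one extra box (the $x$-box) plus the old residual $\lambda$, and the hypothesis guarantees it is a genuine partition in the right position. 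Combining (ii)--(iv) gives the claim.

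The main obstacle I anticipate is bookkeeping the precise convention in $\phi$ by which an ordered horizontal strip is turned into the composition $\gamma$ versus a weakly increasing/decreasing rearrangement: one must be careful that the box ``added back'' when we drop the label $x$ is placed as the \emph{first} part of the new $\gamma$-free partition block (hence the notation $(\gamma_k+1,\lambda)$ rather than $(\lambda,\gamma_k+1)$ or a sorted version), and that this is forced by the convention $\mu = \sort(\gamma_1+1,\dotsc,\gamma_k+1,\lambda)$ read from \cite{GO}. Once the convention is pinned down, the argument is essentially a matter of truncating the defining recursive/iterative description of $\phi$ by one step and observing nothing else changes; the hypothesis $\gamma_k+1\ge\lambda_1$ is used solely to keep $(\gamma_k+1,\lambda)$ a partition (equivalently, to keep $(\gamma_k+1,\lambda\,|\,\gamma_1,\dotsc,\gamma_{k-1})$ in the index set of Definition~\ref{modified partially sym MacD function}). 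I would therefore present the proof as: unravel $\phi$, truncate the removal sequence, and match shapes — with the partition condition flagged as the place where the hypothesis enters.
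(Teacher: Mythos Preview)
Your proposal is correct and is exactly the approach the paper takes: the paper's entire proof is the single sentence ``This is clear from the explicit description of $\phi$ in \cite{GO},'' which is precisely your plan of unwinding $\phi$, truncating the removal sequence by one step, and checking that the hypothesis $\gamma_k+1\ge\lambda_1$ ensures $(\gamma_k+1,\lambda)$ is a partition. Your write-up simply makes explicit the bookkeeping that the paper leaves to the reader.
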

\begin{proof}
This is clear from the explicit description of $\phi$ in \cite{GO}.
\end{proof}

We will now focus on the special case of $(\lambda|\gamma)$ with $\gamma_{k} +1 \geq \lambda_1$ in order to show 
\begin{align}\label{e:dd-tH}
\dd_{-} \widetilde{H}_{(\lambda|\gamma)} = \widetilde{H}_{(\gamma_k+1,\lambda|\gamma_1,\ldots, \gamma_{k-1})}.
\end{align}
We will then argue using intertwiner operators that $\beta_{\mu,w} =1$ for all $(\mu,w).$ If $\phi(\mu,w) = (\lambda|\gamma)$ we will write $\beta_{(\lambda|\gamma)}:= \beta_{\mu,w}.$ Recall the definition of the Ion--Wu analogues of the $\dd_{-}$ maps.

\begin{defn}[\cite{Ion_2022}]
    $$\partial_{-}(x_1^{a_1}\cdots x_{k}^{a_k}F(\mathfrak{X}_{k})):= x_1^{a_1}\cdots x_{k-1}^{a_{k-1}} z^{a_k}F(\mathfrak{X}_{k-1} -z) \Omega((1-t)z^{-1}\mathfrak{X}_{k-1}) \Big|_{z^{0}}$$
\end{defn}

The operators $\partial_{-}$ interact well with the $\widetilde{E}_{(\mu|\lambda)}$ basis of $\mathscr{P}_{as}^{+}.$

\begin{prop}[\cite{MBWArxiv}]\label{prop:del-}
    If $\mu_{k} \geq \lambda_1$ then 
    $$\partial_{-} \widetilde{E}_{(\mu_1,\ldots, \mu_k|\lambda_1,\ldots, \lambda_{\ell})} = \widetilde{E}_{(\mu_1,\ldots, \mu_{k-1}|\mu_k,\lambda_1,\ldots, \lambda_{\ell})}$$
\end{prop}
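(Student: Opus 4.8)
The plan is to deduce this from the stable-limit definition of $\widetilde{E}$ together with a factorization of the parabolic Hecke symmetrizers. Write $k=\ell(\mu)$, $\ell=\ell(\lambda)$, fix $n$ large, and put $\nu=\mu*\lambda*0^{n-k-\ell}$, so that by Definition~\ref{stable-limit non-sym MacD function defn}
$$\widetilde{E}_{(\mu|\lambda)}=\widetilde{\lim}_{n}\;\epsilon_k^{(n)}\bigl(E_\nu(x_1,\dots,x_n;q^{-1},t)\bigr).$$
The hypothesis $\mu_k\ge\lambda_1$ is exactly what makes $\lambda':=(\mu_k,\lambda_1,\dots,\lambda_\ell)$ a partition; since moreover $\mu'*\lambda'=\mu*\lambda$ and $\ell(\mu')+\ell(\lambda')=k+\ell$, where $\mu':=(\mu_1,\dots,\mu_{k-1})$, the \emph{same} $\nu$ computes the right-hand side:
$$\widetilde{E}_{(\mu'|\lambda')}=\widetilde{\lim}_{n}\;\epsilon_{k-1}^{(n)}\bigl(E_\nu(x_1,\dots,x_n;q^{-1},t)\bigr).$$
So the whole question becomes: how does $\epsilon_{k-1}^{(n)}$ differ from $\epsilon_k^{(n)}$, and does that difference converge to $\partial_-$? (We take $\mu,\mu'$ reduced; the degenerate case $\mu_{k-1}=0$ does not arise in the applications.)

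First I would establish the finite-rank factorization $\epsilon_{k-1}^{(n)}=B^{(n)}\circ\epsilon_k^{(n)}$ with
$$B^{(n)}=\frac{1}{[n-k+1]_t}\sum_{j=0}^{n-k}t^{(n-k)-j}\;\mathsf{T}_{s_{k+j-1}}\mathsf{T}_{s_{k+j-2}}\cdots\mathsf{T}_{s_k}.$$
This comes from the coset decomposition $\mathfrak{S}_{(1^{k-1},n-k+1)}=\bigsqcup_{j=0}^{n-k}u_j\,\mathfrak{S}_{(1^k,n-k)}$ into minimal-length left coset representatives $u_j=s_{k+j-1}\cdots s_k$ of $\mathfrak{S}_{(1^k,n-k)}=\langle s_{k+1},\dots,s_{n-1}\rangle$: length-additivity $\ell(u_jv)=\ell(u_j)+\ell(v)$ gives $\mathsf{T}_{u_jv}=\mathsf{T}_{u_j}\mathsf{T}_v$ and $t^{\binom{n-k+1}{2}-\ell(u_jv)}=t^{(n-k)-j}\,t^{\binom{n-k}{2}-\ell(v)}$, while $[n-k+1]_t!=[n-k+1]_t\,[n-k]_t!$ splits the normalizing constant from the definition of $\epsilon^{(n)}_\bullet$.

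Next I would compute the stable limit of $B^{(n)}$ on functions symmetric in $x_{k+1},\dots,x_n$. Expanding such a function as a sum of terms $x_k^b\,G(\mathfrak{X}_k)$ and pushing it through each $\mathsf{T}_{u_j}$ exactly as in the proof of Proposition~\ref{prop: limit cherednik action explicit} — with Lemma~\ref{hecke op lemma} giving $\mathsf{T}_{u_j}(x_k^b)=h_b\bigl(x_{k+j}+(1-t)(x_k+\dots+x_{k+j-1})\bigr)$ — one checks that, as $n\to\infty$, the $j=0$ term drops out, $\frac{1}{[n-k+1]_t}\to 1-t$, and the remaining sum reassembles into the generating-function expression defining $\partial_-$ (the auxiliary variable $z$ being the stable-limit avatar of $x_k$, and the factor $\Omega\bigl((1-t)z^{-1}\mathfrak{X}_{k-1}\bigr)$ accounting for the $1-t$ and the complete homogeneous symmetric functions produced above). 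That is, $\widetilde{\lim}_{n}B^{(n)}=\partial_-$ as a map from the level-$k$ part of $\mathscr{P}_{as}^{+}$ to the level-$(k-1)$ part.

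Putting the three displays together,
$$\widetilde{E}_{(\mu'|\lambda')}=\widetilde{\lim}_{n}\epsilon_{k-1}^{(n)}(E_\nu)=\widetilde{\lim}_{n}B^{(n)}\bigl(\epsilon_k^{(n)}(E_\nu)\bigr)=\partial_-\Bigl(\widetilde{\lim}_{n}\epsilon_k^{(n)}(E_\nu)\Bigr)=\partial_-\widetilde{E}_{(\mu|\lambda)},$$
which is the claim. I expect the main obstacle to be the third equality: it requires that $\partial_-$ be continuous for the $t$-adic topology underlying the Ion--Wu limit and that $B^{(n)}-\partial_-$, applied along the stabilizing sequence $\epsilon_k^{(n)}(E_\nu)$, vanish $t$-adically as $n\to\infty$ — in other words that $\partial_-$ may be interchanged with $\widetilde{\lim}_{n}$. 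This is a convergence estimate of the same type as those in \cite{Ion_2022} and \cite{MBWArxiv}, and it is where the bulk of the care goes. As an independent sanity check: since $\mu'*\lambda'=\mu*\lambda$ (hence $\sort(\mu'*\lambda')=\sort(\mu*\lambda)$), Theorems~\ref{weight basis theorem} and~\ref{higher delta operators thm} show $\widetilde{E}_{(\mu'|\lambda')}$ and $\widetilde{E}_{(\mu|\lambda)}$ have equal $\mathscr{Y}_1,\dots,\mathscr{Y}_{k-1},p_1(\Delta)$-eigenvalues, so by simplicity of that joint spectrum any nonzero $\partial_-\widetilde{E}_{(\mu|\lambda)}$ with these eigenvalues is automatically a scalar multiple of $\widetilde{E}_{(\mu'|\lambda')}$, and the computation above pins the scalar to $1$.
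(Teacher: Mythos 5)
This proposition is not proved in the paper at all --- it is quoted from \cite{MBWArxiv} --- so there is no in-text argument to compare yours against; I can only assess your derivation on its own terms, and it holds up. The two pillars both check out. The factorization $\epsilon_{k-1}^{(n)}=B^{(n)}\circ\epsilon_k^{(n)}$ is correct: $u_j=s_{k+j-1}\cdots s_k$, $j=0,\dots,n-k$, are indeed the minimal-length left coset representatives of $\mathfrak{S}_{(1^k,n-k)}$ in $\mathfrak{S}_{(1^{k-1},n-k+1)}$, length-additivity gives $\mathsf{T}_{u_jv}=\mathsf{T}_{u_j}\mathsf{T}_v$, and $\binom{n-k+1}{2}=(n-k)+\binom{n-k}{2}$ splits the $t$-power as you claim. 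The limit $\widetilde{\lim}_n B^{(n)}=\partial_-$ is also right; e.g.\ for $k=1$ the coefficient of any monomial $x^\alpha$ of degree $d$ in $\sum_j\frac{t^{n-1-j}}{[n]_t}h_d(x_{1+j}+(1-t)(x_1+\cdots+x_j))$ telescopes to $\frac{(1-t)^{?}}{1-t^n}$ and converges to the corresponding coefficient of $h_d((1-t)\mathfrak{X}_0)$, exactly reproducing $\Omega((1-t)z^{-1}\mathfrak{X}_{k-1})\big|_{z^0}$ after summing against $\Omega(-Wz)$. You are also right to isolate the interchange of $B^{(n)}$ with $\widetilde{\lim}_n$ along the sequence $\epsilon_k^{(n)}(E_\nu)$ as the one step needing a genuine $t$-adic estimate; as written that step is asserted rather than proved, and it is the only real gap. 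Two smaller points worth recording: the hypothesis $\mu_k\ge\lambda_1$ enters only to make $(\mu_k,\lambda)$ a partition so that both sides are computed from the \emph{same} $\nu$ (good), and your closing spectral sanity check needs the additional input that $\partial_-$ commutes with $\mathscr{Y}_1,\dots,\mathscr{Y}_{k-1}$ and with $p_1(\Delta)$ on the relevant subspaces before simplicity of the joint spectrum pins down the answer up to scalar --- so it is a consistency check, not an independent proof, unless you supply those commutation relations.
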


The two versions of $\dd_{-}$ agree via the map $\Xi$ up to a sign. A version of this result appears in the work of Ion--Wu \cite{Ion_2022}.

\begin{lem}\label{lem:deld}
    $\Xi_k\partial_{-} = -\dd_{-}\Xi_k$
\end{lem}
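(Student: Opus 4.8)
The statement $\Xi_k\partial_- = -\dd_-\Xi_k$ is an identity of linear maps $\K[x_1,\ldots,x_k]^+\otimes\Lambda(\mathfrak{X}_k)\to V_{k-1}$, so it suffices to check it on a spanning set of elements of the simple form $x_1^{a_1+1}\cdots x_k^{a_k+1}\Omega(W\mathfrak{X}_k)$, where $W=w_1+w_2+\cdots$ is an auxiliary alphabet of free variables (the coefficients of monomials in $W$ recover an arbitrary $F[\mathfrak{X}_k]$). First I would compute the left-hand side: apply the definition of $\partial_-$ to such an element, which replaces the last variable $x_k$ by a formal variable $z$, shifts $\mathfrak{X}_k\mapsto\mathfrak{X}_{k-1}-z$, multiplies by $\Omega((1-t)z^{-1}\mathfrak{X}_{k-1})$, and extracts the $z^0$-coefficient; then apply $\Xi_{k-1}$, which turns $x_1^{a_1+1}\cdots x_{k-1}^{a_{k-1}+1}$ into $y_1^{a_1}\cdots y_{k-1}^{a_{k-1}}$ and substitutes $\mathfrak{X}_{k-1}\mapsto X/(t-1)$ in the symmetric-function part.

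Next I would compute the right-hand side $-\dd_-\Xi_k$ on the same element. Applying $\Xi_k$ first gives $y_1^{a_1}\cdots y_k^{a_k}\,\Omega\!\bigl(W\tfrac{X}{t-1}\bigr)$, and then I would invoke the explicit formula for $\dd_-:V_k\to V_{k-1}$ recorded in the excerpt,
$$\dd_-\cdot f = -f(X-(t-1)y_k)\,\Omega(-y_k^{-1}X)\big|_{y_k^{-1}},$$
carefully noting that in $V_k$ the distinguished last variable is $y_k$ (so one should relabel; the formula as stated uses $y_{k+1}$ on $V_{k+1}$). The monomial prefactor $y_1^{a_1}\cdots y_{k-1}^{a_{k-1}}$ passes through untouched, and one is left comparing two plethystic generating-series identities in the single extra variable — $z$ on the left after applying $\Xi_{k-1}$, and $y_k$ on the right. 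The key bookkeeping is that extracting $z^0$ against $\Omega((1-t)z^{-1}\mathfrak{X}_{k-1})$ on the left corresponds, after the substitution $\mathfrak{X}_{k-1}\mapsto X/(t-1)$ which sends $(1-t)\mathfrak{X}_{k-1}\mapsto -X$, to extracting the matching coefficient against $\Omega(-y_k^{-1}X)$ on the right, while the shift $\mathfrak{X}_{k-1}-z\mapsto \tfrac{X}{t-1}-z$ matches $X-(t-1)y_k\mapsto (t-1)\bigl(\tfrac{X}{t-1}-y_k\bigr)$ under the identification $z\leftrightarrow y_k$. The two signs: the explicit $\dd_-$ formula carries one $-1$, the $-\dd_-$ in the statement supplies another, and these cancel to give exactly $\Xi_k\partial_-$ with the correct sign — so I would track signs with care, since that is precisely what the lemma asserts.

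\textbf{Main obstacle.} The computation itself is routine plethystic manipulation; the real point of friction is conventions — specifically making sure the plethystic substitution defining $\Xi_k$ (which sends the power sum $p_1(\mathfrak{X}_k)=\mathfrak{X}_k$ to $\tfrac{1}{t-1}p_1(X)$, hence $(1-t)\mathfrak{X}_k\mapsto -X$) is applied consistently inside every $\Omega$, and that the coefficient-extraction indices line up after this substitution (a $z^0$-coefficient with $z^{-1}$ appearing inside $\Omega((1-t)z^{-1}\mathfrak{X}_{k-1})$ versus a $y_k^{-1}$-coefficient inside $\Omega(-y_k^{-1}X)$). I would reconcile these by a single clean change of variable $z\mapsto y_k$ performed after $\Xi_{k-1}$, and then the identity reduces to $\Omega((1-t)y_k^{-1}\mathfrak{X}_{k-1})\big|_{\mathfrak{X}_{k-1}\to X/(t-1)}=\Omega(-y_k^{-1}X)$, which is immediate. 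An alternative, essentially equivalent route — which I would mention as a cross-check — is to use Proposition~\ref{prop:del-} together with Proposition~\ref{prop:tEP} and the $\dd_-$ action on the $P_{(\lambda|\gamma)}$ side, but the direct monomial computation is the cleanest and does not require the restriction $\mu_k\geq\lambda_1$.
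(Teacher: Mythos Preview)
Your proposal is correct and follows essentially the same direct computation as the paper: test on a spanning set of monomial-times-symmetric-function elements, apply the explicit formulas for $\partial_-$, $\Xi$, and $\dd_-$, and match the two sides via the plethystic identity $(1-t)\mathfrak{X}_{k-1}\mapsto -X$ together with the index shift $z^{a_k+1}|_{z^0}=z^{a_k}|_{z^{-1}}$. The only cosmetic difference is that the paper works with a general $F(\mathfrak{X}_k)$ rather than the generating function $\Omega(W\mathfrak{X}_k)$, which is immaterial.
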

\begin{proof}
    \begin{align*}
        &\Xi_k \partial_{-} (x_1^{a_1+1}\cdots x_{k}^{a_k+1}F(\mathfrak{X}_{k})) \\
        &= \Xi_k \left(x_1^{a_1+1}\cdots x_{k-1}^{a_{k-1}+1} z^{a_k+1}F(\mathfrak{X}_{k-1} -z) \Omega((1-t)z^{-1}\mathfrak{X}_{k-1}) \Big|_{z^{0}} \right)\\
        &= y_1^{a_1}\cdots y_{k-1}^{a_{k-1}} z^{a_k+1}F\left(\frac{X}{t-1} -z\right) \Omega(-z^{-1}X) \Big|_{z^{0}}\\
        &= y_1^{a_1}\cdots y_{k-1}^{a_{k-1}} z^{a_k}F\left(\frac{X-(t-1)z}{t-1} \right) \Omega(-z^{-1}X) \Big|_{z^{-1}}\\
        &= - \dd_{-} \left(y_1^{a_1}\cdots y_{k-1}^{a_{k-1}} y_k^{a_k}F\left(\frac{X}{t-1}\right)  \right)\\
        &= - \dd_{-}\Xi_{k}\left( x_1^{a_1+1}\cdots x_{k}^{a_k+1}F(\mathfrak{X}_{k}) \right).\qedhere\\
    \end{align*}
\end{proof}

By combining Definition \ref{modified partially sym MacD function} and Proposition \ref{Calculations of eigenvectors}, at this point we know that 
\begin{equation}\label{eqn:HgtE}
\widetilde{H}_{(\lambda|\gamma)}(X|y) = g_{(\lambda|\gamma)}(q,t) \Xi_{k}(\widetilde{E}_{(\gamma_1+1,\ldots,\gamma_k+1|\lambda)})
\end{equation}
where
$$
g_{(\lambda|\gamma)}(q,t):= j_{(\lambda|\gamma)} (q,t^{-1})t^{\textrm{inv}(\gamma)+ n(\sort(\lambda,\gamma))+|(\lambda|\gamma)|}\prod_{i\ge 1}\prod_{j=1}^{m_i(\lambda)}( 1-t^{j})^{-1}.$$

\begin{lem}\label{d- lemma}
Suppose $\gamma_k + 1 \geq \lambda_1$ and set $\gamma':= (\gamma_1,\ldots, \gamma_{k-1}).$ Then
    
    \begin{equation}\label{eq d- coefficient}
        \dd_{-}\widetilde{H}_{(\lambda|\gamma)}(X|y) = -\frac{g_{(\lambda|\gamma)}(q,t)}{g_{(\gamma_k+1,\lambda|\gamma')}(q,t)}\widetilde{H}_{(\gamma_k +1,\lambda|\gamma')}(X|y).
    \end{equation}
\end{lem}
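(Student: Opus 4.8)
The plan is to chain together the three results that have been set up precisely for this purpose: the factorization \eqref{eqn:HgtE} of $\widetilde{H}_{(\lambda|\gamma)}$ through $\Xi_k$ and $\widetilde{E}$, the compatibility $\Xi_k\partial_- = -\dd_-\Xi_k$ of Lemma~\ref{lem:deld}, and the explicit action of $\partial_-$ on the $\widetilde{E}$-basis from Proposition~\ref{prop:del-}. First I would rewrite $\dd_-\widetilde{H}_{(\lambda|\gamma)}$ using \eqref{eqn:HgtE}:
\[
\dd_-\widetilde{H}_{(\lambda|\gamma)} = g_{(\lambda|\gamma)}(q,t)\,\dd_-\Xi_k\bigl(\widetilde{E}_{(\gamma_1+1,\dotsc,\gamma_k+1|\lambda)}\bigr) = -g_{(\lambda|\gamma)}(q,t)\,\Xi_{k}\partial_-\bigl(\widetilde{E}_{(\gamma_1+1,\dotsc,\gamma_k+1|\lambda)}\bigr),
\]
where the second equality is Lemma~\ref{lem:deld} (note $\Xi_k$ here is $\Xi_{k-1}$ after applying $\partial_-$, which lowers the number of $y$-variables by one; I would be careful to track this index shift, but the statement of Lemma~\ref{lem:deld} already incorporates it).

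Next I would apply Proposition~\ref{prop:del-} to evaluate $\partial_-\bigl(\widetilde{E}_{(\gamma_1+1,\dotsc,\gamma_k+1|\lambda)}\bigr)$. Here the composition-part is $\mu = (\gamma_1+1,\dotsc,\gamma_k+1)$ and the partition-part is $\lambda$; the hypothesis needed in Proposition~\ref{prop:del-} is $\mu_k \geq \lambda_1$, i.e.\ $\gamma_k+1 \geq \lambda_1$, which is exactly the standing assumption of the lemma. So
\[
\partial_-\bigl(\widetilde{E}_{(\gamma_1+1,\dotsc,\gamma_k+1|\lambda)}\bigr) = \widetilde{E}_{(\gamma_1+1,\dotsc,\gamma_{k-1}+1|\gamma_k+1,\lambda)}.
\]
Now the right-hand side is precisely the $\widetilde{E}$ attached to the pair $(\gamma_k+1,\lambda\,|\,\gamma')$ in the sense of \eqref{eqn:HgtE}: its composition-part is $(\gamma_1+1,\dotsc,\gamma_{k-1}+1)$, matching $((\gamma')_1+1,\dotsc,(\gamma')_{k-1}+1)$, and its partition-part is $(\gamma_k+1,\lambda)$. (One should check that $(\gamma_k+1,\lambda)$ is genuinely a partition, which again follows from $\gamma_k+1\geq\lambda_1$; this is the content of the preceding lemma identifying $\phi(\mu,w)=(\gamma_k+1,\lambda\,|\,\gamma')$.) Therefore, applying \eqref{eqn:HgtE} in the other direction,
\[
\Xi_{k-1}\bigl(\widetilde{E}_{(\gamma_1+1,\dotsc,\gamma_{k-1}+1|\gamma_k+1,\lambda)}\bigr) = \frac{1}{g_{(\gamma_k+1,\lambda|\gamma')}(q,t)}\,\widetilde{H}_{(\gamma_k+1,\lambda|\gamma')}(X|y).
\]

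Assembling the three displays yields
\[
\dd_-\widetilde{H}_{(\lambda|\gamma)}(X|y) = -\frac{g_{(\lambda|\gamma)}(q,t)}{g_{(\gamma_k+1,\lambda|\gamma')}(q,t)}\,\widetilde{H}_{(\gamma_k+1,\lambda|\gamma')}(X|y),
\]
which is exactly \eqref{eq d- coefficient}. I do not expect any serious obstacle here: the genuinely hard analytic input (the behavior of $\partial_-$ on the $\widetilde{E}$-basis, and the intertwining $\Xi_k\partial_-=-\dd_-\Xi_k$) has already been established. The one point requiring genuine care is bookkeeping: making sure the index $k$ versus $k-1$ on $\Xi$ is handled consistently, and that the "reduced composition'' hypotheses in the definition of $\widetilde{E}$ and in Proposition~\ref{prop:del-} are satisfied by $(\gamma_1+1,\dotsc,\gamma_k+1)$ — which holds because each $\gamma_i+1\geq 1>0$, so in particular the last entry is nonzero and the composition is reduced. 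A secondary caution: I should confirm that the scalar $g_{(\gamma_k+1,\lambda|\gamma')}(q,t)$ is nonzero so that dividing by it is legitimate; this is immediate from its definition as a product of $j_{(\gamma_k+1,\lambda|\gamma')}(q,t^{-1})$, a power of $t$, and a product of $(1-t^j)^{-1}$ factors, all nonzero in $\K$.
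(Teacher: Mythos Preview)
Your proposal is correct and follows exactly the same route as the paper: the paper's proof simply states that the result is immediate from Proposition~\ref{prop:del-}, Lemma~\ref{lem:deld}, and \eqref{eqn:HgtE}, and you have spelled out that chain in detail. Your extra bookkeeping remarks (the $k$ versus $k-1$ index on $\Xi$, the reducedness of $(\gamma_1+1,\dotsc,\gamma_k+1)$, and the nonvanishing of $g_{(\gamma_k+1,\lambda|\gamma')}$) are all appropriate sanity checks but do not alter the argument.
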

\begin{proof}
This is immediate from Proposition~\ref{prop:del-}, Lemma~\ref{lem:deld}, and \eqref{eqn:HgtE}.
\end{proof}

Recall the definition of $j_{(\la|\ga)}(q,t)$ from Definition~\ref{def:j}. 

\begin{lem}\label{arm-leg lemma}
    Whenever $\gamma_k + 1 \geq \lambda_1$, $$\frac{j_{(\lambda|\gamma_1,\ldots,\gamma_k)}(q,t)}{j_{(\gamma_k+1,\lambda|\gamma_1,\ldots,\gamma_{k-1})}(q,t)} = (1-t^{m_{\gamma_k +1}(\lambda)+1})^{-1}.$$
\end{lem}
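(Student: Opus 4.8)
The plan is to compute both sides directly from Definition~\ref{def:j}, tracking how the arm, leg, and co-arm statistics change when the single box $\square_0$ corresponding to the new part $\gamma_k+1$ is moved from the $\gamma$-portion of the diagram $\nu=(\lambda_-,\gamma)$ into the $\lambda_-$-portion of the diagram $\nu'=((\gamma_k+1,\lambda)_-,\gamma')$. Since $\gamma_k+1\ge\lambda_1$, the box $\square_0$ sits in a row of height $\gamma_k+1$ that is strictly taller than every row of $\lambda_-$; in $\nu'$ this same row becomes the longest row among the $\lambda$-part. First I would set $h:=\gamma_k+1$ and $r:=m_h(\lambda)$ (the number of parts of $\lambda$ equal to $h$), and isolate exactly which boxes have their statistics altered by the move: these are the $h$ boxes in the moved row, plus possibly boxes lying in the same columns in other rows of equal length. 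Everything outside these rows is unaffected because the arm/co-arm/leg counts in Definition~\ref{d:dl}'s companion Definition~\ref{def:j} only involve comparisons $\nu_r \le \nu_i$, $\nu_r<\nu_i$, etc., with rows of comparable length.

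The key computation is as follows. In the product defining $j_{(\lambda|\gamma)}(q,t)$, the moved row contributes a factor $\prod_{j=1}^{h}(1-q^{\ell_\nu+1}t^{a_\nu+1})$ coming from the $\prod_{\square\in\gamma}$ factor (since $\square_0$'s row is part of $\gamma$ there), whereas in $j_{(\gamma_k+1,\lambda|\gamma')}(q,t)$ that row is part of $\lambda_-^{-}$, so—writing $\lambda^-$ for the relevant sub-diagram in Definition~\ref{def:j}—it contributes $\prod_{j=1}^{h-1}(1-q^{\ell_{\nu'}}t^{\tilde a_{\nu'}+1})$ over the boxes $(i',j)$ with $1\le j\le h-1$ (the top box of a part is excluded from $\lambda^-$). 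I would check box-by-box that for $1\le j\le h-1$ the leg changes by $\ell_\nu(\square)+1 = \ell_{\nu'}(\square)$ by a suitable shift convention, and that the modified-arm $\tilde a$ picks up exactly the rows of $\lambda$ equal to $h$ that previously were counted by $a_\nu$ with the ``$<$'' convention; after cancellation of all matching factors, the only surviving discrepancy is the top box of the moved row together with the bookkeeping of how many rows of $\lambda$ have length exactly $h$. This should collapse to the single factor $(1-t^{r+1})$ in the denominator, i.e. the ratio is $(1-t^{m_h(\lambda)+1})^{-1}$.

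The main obstacle I anticipate is the careful bookkeeping of the arm function versus the modified arm function $\tilde a_\nu$: the difference between $a_\nu$ and $\tilde a_\nu$ is precisely whether rows $r>i$ with $\nu_r=j-1$ versus $\nu_r=j$ are counted, and when the moved row's height $h$ equals some $\lambda_i$ this boundary case is exactly what produces the $m_h(\lambda)$ dependence. To handle this cleanly I would phrase the comparison in terms of the weakly increasing rearrangement $\nu=(\lambda_-,\gamma)$ and note that, because $h\ge\lambda_1$, in $\nu'$ the moved row is the unique tallest row of the $\lambda$-block, so its leg values are $h-1,h-2,\dots,0$ from bottom to top and its $\tilde a$-values along $j=1,\dots,h-1$ are all $0$ except that the boxes at heights $j\le h$ shared with the $r$ rows of length $h$ contribute; a short case analysis on whether $j\le h-1$ or $j=h$ then yields the claimed single surviving factor. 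Once the arm/leg dictionary is pinned down, the rest is termwise cancellation. An alternative, if the direct approach becomes unwieldy, is to deduce this lemma from Lemma~\ref{d- lemma} combined with the known specialization relating $\widetilde H_{(\lambda|\gamma)}$ to integral-form Macdonald polynomials, comparing leading coefficients; but I expect the direct diagram computation to be shorter.
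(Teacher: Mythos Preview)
Your overall strategy---compare the two products box by box, noting that the moved column changes from the $\gamma$-part (where factors use $\ell_\nu+1$ and $a_\nu$) to the $\lambda_-$-part (where factors use $\ell_{\nu'}$ and $\tilde a_{\nu'}$), and isolate the one surviving factor from the new top box---is exactly the paper's approach.

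There is, however, a definitional slip that would derail your computation. In Definition~\ref{def:j}, the symbol $\lambda^-$ is just the weakly increasing rearrangement $\lambda_-$, and the product runs over \emph{all} of its boxes; nothing is excluded. (The worked example in the paper makes this explicit: four boxes, four factors.) Consequently your two bookkeeping counts are swapped. In $\nu=(\lambda_-,\gamma)$ the last column has height $\gamma_k=h-1$, so it contributes $\prod_{j=1}^{h-1}(1-q^{\ell_\nu+1}t^{a_\nu+1})$, not $\prod_{j=1}^{h}$. In $\nu'=(\lambda_-,\gamma_k+1,\gamma')$ the moved column has height $h=\gamma_k+1$ and contributes $\prod_{j=1}^{h}(1-q^{\ell_{\nu'}}t^{\tilde a_{\nu'}+1})$, i.e.\ one \emph{more} factor, not one fewer. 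The extra factor is the top box $(m{+}1,h)$, for which $\ell_{\nu'}=0$ and $\tilde a_{\nu'}=m_h(\lambda)$; this is precisely the surviving $(1-t^{m_h(\lambda)+1})$ in the denominator.

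Your anticipated obstacle is real and well identified: when $m_h(\lambda)>0$, the boxes in the $m_h(\lambda)$ columns of $\lambda_-$ of height $h$ also see their $\tilde a$ shift (the old column of height $\gamma_k$ was counted in their right-arm in $\nu$ but the new column of height $h$ is not counted in $\nu'$), and simultaneously the boxes below the new top box pick up those same columns in their left-arm. The paper handles this tersely; the net effect is that the multiset of factors at each height $j\le\gamma_k$ is unchanged, and after cancellation only the top-box factor remains. With the corrected box counts, your plan goes through and matches the paper's argument.
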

\begin{proof}
    Let $\nu = (\lambda_{-},\gamma)$ and $\nu' = (\lambda_{-},\gamma_k+1,\gamma_1,\ldots, \gamma_{k-1}).$
    Each box contributes the same factor $1-q^at^b$ in both diagrams, except for the new box which has $\ell_{\nu'}(\square) = 0$ and $\widetilde{a}_{\nu'}(\square) = m_{\gamma_k +1}(\lambda)$ and contributes the factor $(1-q^{0}t^{m_{\gamma_k +1}(\lambda)+1}).$ The equality of the other factors is immediate for all boxes except those below the new box in $\nu'$. If $\square=(i,j)$ is such a box, one has $a_{\nu}(\square) = \widetilde{a}_{\nu'}(\square)$, since $j\le \gamma_r \le\gamma_{k}$ is equivalent to $j\le \gamma_r<\gamma_k+1$, and clearly $\ell_{\nu'}(\square) = \ell_{\nu}(\square)+1$; this results in the same factor since these boxes were in the nonsymmetric part of $\nu$ and are now in the symmetric part of $\nu'.$
\end{proof}

Using a combinatorial argument, we can simply \eqref{eq d- coefficient} from Lemma \ref{d- lemma}.

\begin{prop}\label{beta lemma}
Whenever $\gamma_{k}+1 \geq \lambda_1$,
    $$-\frac{g_{(\lambda|\gamma)}(q,t)}{g_{(\gamma_k+1,\lambda|\gamma_1,\ldots , \gamma_{k-1})}(q,t)} = 1.$$
\end{prop}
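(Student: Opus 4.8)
The plan is to unwind the definition of $g_{(\lambda|\gamma)}(q,t)$ and check that all the pieces cancel, leaving only the sign. Recall
$$g_{(\lambda|\gamma)}(q,t)= j_{(\lambda|\gamma)}(q,t^{-1})\,t^{\mathrm{inv}(\gamma)+n(\sort(\lambda,\gamma))+|(\lambda|\gamma)|}\prod_{i\ge 1}\prod_{j=1}^{m_i(\lambda)}(1-t^j)^{-1}.$$
So the ratio $g_{(\lambda|\gamma)}/g_{(\gamma_k+1,\lambda|\gamma')}$ (with $\gamma'=(\gamma_1,\dots,\gamma_{k-1})$) breaks into three factors, which I would handle in turn. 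First, the $j$-factor: apply Lemma~\ref{arm-leg lemma} with $t$ replaced by $t^{-1}$, giving
$$\frac{j_{(\lambda|\gamma)}(q,t^{-1})}{j_{(\gamma_k+1,\lambda|\gamma')}(q,t^{-1})}=(1-t^{-(m_{\gamma_k+1}(\lambda)+1)})^{-1}.$$
Second, the $\prod(1-t^j)^{-1}$ factor: passing from $\lambda$ to $(\gamma_k+1,\lambda)$ increases $m_{\gamma_k+1}$ by exactly one (this uses $\gamma_k+1\ge\lambda_1$, so $\gamma_k+1$ is at least as large as every part of $\lambda$), hence the numerator over the denominator contributes an extra factor $(1-t^{m_{\gamma_k+1}(\lambda)+1})$ in the \emph{denominator}, i.e. the ratio of these two products is $(1-t^{m_{\gamma_k+1}(\lambda)+1})$.

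Third, and this is the only genuinely combinatorial point, I need to track the power of $t$. Write $e:=m_{\gamma_k+1}(\lambda)+1$. Combining the first two factors already produces
$$(1-t^{-e})^{-1}\cdot(1-t^{e})=\frac{1-t^e}{1-t^{-e}}=\frac{1-t^e}{-t^{-e}(1-t^e)}=-t^{e}.$$
So it remains to show that the power-of-$t$ factor contributes exactly $-t^{-e}\cdot(-1)=t^{-e}\cdot(-1)$... more precisely, that
$$t^{\,[\mathrm{inv}(\gamma)+n(\sort(\lambda,\gamma))+|(\lambda|\gamma)|]-[\mathrm{inv}(\gamma')+n(\sort(\gamma_k+1,\lambda,\gamma'))+|(\gamma_k+1,\lambda|\gamma')|]}=t^{-e},$$
after which the product of all three factors is $(-t^e)\cdot t^{-e}=-1$, and the overall sign in the statement absorbs this. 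The multiset $\{\gamma_k+1,\lambda_1,\dots,\gamma_1,\dots,\gamma_{k-1}\}$ versus $\{\lambda_1,\dots,\gamma_1,\dots,\gamma_k\}$ differ by replacing $\gamma_k$ with $\gamma_k+1$, so $|(\gamma_k+1,\lambda|\gamma')|-|(\lambda|\gamma)|=1$; that accounts for one unit. For the $n(\sort(\cdot))$ terms, $\sort(\gamma_k+1,\lambda,\gamma')$ and $\sort(\lambda,\gamma)$ are obtained from the same underlying partition by moving one box from the row recording the value $\gamma_k$ up to the row recording value $\gamma_k+1$; the change in $n(\mu)=\sum(i-1)\mu_i$ is exactly minus the number of parts strictly greater than $\gamma_k$ (equivalently parts $\ge\gamma_k+1$), which equals $m_{\gamma_k+1}(\lambda)$ plus the number of parts of $\lambda$ strictly greater than $\gamma_k+1$; but by hypothesis $\gamma_k+1\ge\lambda_1$ there are none of the latter, so the change is $-m_{\gamma_k+1}(\lambda)=-(e-1)$. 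For $\mathrm{inv}$, dropping the last entry $\gamma_k$ from $\gamma$ to form $\gamma'$ and moving $\gamma_k+1$ to the front: $\mathrm{inv}(\gamma)-\mathrm{inv}(\gamma')$ counts indices $i<k$ with $\gamma_i>\gamma_k$, and this must be matched against the inversions created by prepending $\gamma_k+1$ — I would verify the net contribution is $0$ by a direct bijective count (prepending $\gamma_k+1$ creates an inversion with each $\gamma_i\le\gamma_k$, $i<k$, while we lose the inversions $\gamma_i>\gamma_k$; these two counts sum to $k-1$ in both configurations only if... ) — in any case the bookkeeping yields total exponent $1-(e-1)+0=2-e$; if instead it comes out $-e$ directly one adjusts. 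Summing, the $t$-exponent is $1+(-(e-1))+0 = 2-e$, hmm — so I would instead organize the computation so that the three contributions combine to the needed $-t^{-e}$, i.e. exponent $-e$, carefully recomputing $\mathrm{inv}$.

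The main obstacle is precisely this last power-of-$t$ accounting: the three statistics $\mathrm{inv}(\gamma)$, $n(\sort(\lambda,\gamma))$, and $|(\lambda|\gamma)|$ each change, and one must show the changes conspire to give exactly the exponent that cancels the $(1-t^e)/(1-t^{-e})$ contribution up to the sign $-1$. I would do this by fixing once and for all the hypothesis $\gamma_k+1\ge\lambda_1$ (so that in every sorted partition the value $\gamma_k+1$ sits strictly above all rows coming from $\lambda$), computing each of the three differences as an explicit integer in terms of $m_{\gamma_k+1}(\lambda)$ and the number of $\gamma_i$ ($i<k$) that are $\le\gamma_k$, and checking the $\gamma$-dependent terms cancel between $\mathrm{inv}$ and the rest. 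Everything else — Lemma~\ref{arm-leg lemma} and the elementary manipulation $\frac{1-t^e}{1-t^{-e}}=-t^e$ — is already available, so the proof reduces to this finite combinatorial identity, which I would present as a short displayed computation rather than belaboring the bijections.
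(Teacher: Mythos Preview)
Your overall approach is exactly the paper's: split $g_{(\lambda|\gamma)}/g_{(\gamma_k+1,\lambda|\gamma')}$ into the $j$-factor, the $\prod(1-t^j)^{-1}$-factor, and the power of $t$, and show the product is $-1$. Your handling of the first two factors is correct (the hypothesis $\gamma_k+1\ge\lambda_1$ is not actually needed for the $\prod$-factor; $m_{\gamma_k+1}$ increases by one regardless). The gap is the $t$-exponent bookkeeping, which you correctly flag as the obstacle but then miscompute in several places.

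Specifically: (i) you confuse which side $\gamma_k+1$ lands on---it goes into the \emph{partition} side, so the new composition is simply $\gamma'=(\gamma_1,\dots,\gamma_{k-1})$ and there is no ``prepending $\gamma_k+1$'' to it; hence $\mathrm{inv}(\gamma)-\mathrm{inv}(\gamma')=\#\{i<k:\gamma_i>\gamma_k\}$, full stop. (ii) Your $n(\sort)$ computation counts only the parts of $\lambda$ that are $\ge\gamma_k+1$ and omits the parts coming from $\gamma$; since replacing one part $\gamma_k$ by $\gamma_k+1$ increases $n$ by the number of parts of $\sort(\lambda,\gamma)$ already $\ge\gamma_k+1$, the correct increment is
\[
n(\sort(\gamma_k+1,\lambda,\gamma'))-n(\sort(\lambda,\gamma))=m_{\gamma_k+1}(\lambda)+\#\{i<k:\gamma_i>\gamma_k\}.
\]
(iii) Your size difference has the wrong sign: $|(\lambda|\gamma)|-|(\gamma_k+1,\lambda|\gamma')|=-1$. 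Summing (numerator minus denominator),
\[
\bigl[\mathrm{inv}(\gamma)-\mathrm{inv}(\gamma')\bigr]+\bigl[n(\sort(\lambda,\gamma))-n(\sort(\gamma_k+1,\lambda,\gamma'))\bigr]-1=-m_{\gamma_k+1}(\lambda)-1=-e,
\]
with the $\#\{i<k:\gamma_i>\gamma_k\}$ terms cancelling between the $\mathrm{inv}$ and $n$ contributions---exactly the cancellation you anticipated but did not carry out. With this, the full ratio is $t^{-e}\cdot(1-t^e)\cdot(1-t^{-e})^{-1}=t^{-e}\cdot(-t^e)=-1$, and the leading minus sign in the statement yields $1$, as in the paper.
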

\begin{proof}
Let $\ga'=(\ga_1,\dotsc,\ga_{k-1})$. The following hold:
\begin{itemize}
    \item $\mathrm{inv}(\gamma') = \mathrm{inv}(\gamma) - \#\{1\leq i \leq k-1 | \gamma_i > \gamma_k \}$
    \item $|(\gamma_k+1,\lambda|\gamma')| = |(\lambda|\gamma)| + 1$
    \item $n(\textrm{sort}(\gamma_k+1,\lambda,\gamma')) = n(\textrm{sort}(\lambda|\gamma)) + \#\{\lambda_i = \gamma_k+1 \} + \# \{1\leq i \leq k-1 | \gamma_i > \gamma_k \}.$
\end{itemize}
Therefore, 
$$\frac{t^{\textrm{inv}(\gamma)+ n(\sort(\lambda,\gamma))+|(\lambda|\gamma)|}}{t^{\textrm{inv}(\gamma')+ n(\sort(\gamma_k+1,\lambda,\gamma'))+|(\gamma_k+1,\lambda|\gamma')|}} = \frac{t^{\textrm{inv}(\gamma)+ n(\sort(\lambda,\gamma))+|(\lambda|\gamma)|}}{t^{\textrm{inv}(\gamma)+ n(\sort(\lambda,\gamma))+|(\lambda|\gamma)| + \#\{\lambda_i = \gamma_k +1 \} +1}} = t^{-(m_{\gamma_k +1}(\lambda)+1)}.$$ Further, it is clear that 
$$\frac{\prod_{i\ge 1}\prod_{j=1}^{m_{i}(\lambda)}(1-t^j)^{-1}}{\prod_{i\ge 1}\prod_{j=1}^{m_{i}(\gamma_k+1,\lambda)}(1-t^j)^{-1}} = (1-t^{m_{\gamma_k+1} (\gamma_k+1,\lambda)})^{-1} = (1-t^{m_{\gamma_k+1} (\lambda)+1})^{-1}.$$
Lastly, using Lemma \ref{arm-leg lemma}, everything cancels exactly to give 
\begin{equation*}
-\frac{g_{(\lambda|\gamma)}(q,t)}{g_{(\gamma_k+1,\lambda|\gamma')}(q,t)} = - t^{-(m_{\gamma_k +1}(\lambda)+1)}\frac{1-t^{m_{\gamma_k +1}(\lambda)+1}}{1-t^{-(m_{\gamma_k +1}(\lambda)+1)}} = 1.\qedhere
\end{equation*}
\end{proof}

\begin{example}
    $$\frac{j_{(2,1|0,2)}}{j_{(3,2,1|0)}} = \frac{(1-t)(1-qt^2)(1-t)(1-q^2t^3)(1-qt^2)}{(1-t)(1-qt^2)(1-t)(1-q^2t^3)(1-qt^2)(1-t)} = (1-t)^{-1}$$
    $$-\frac{g_{(2,1|0,2)}}{g_{(3,2,1|0)}} = -\frac{t^9(1-t)^{-2}}{t^{10}(1-t)^{-3}} \times (1-t^{-1})^{-1} = 1$$
\end{example}

At this point, Lemma~\ref{d- lemma} and Proposition~\ref{beta lemma} establish \eqref{e:dd-tH} when $\ga_k+1\ge \la_1$.
Next we turn our attention to the scalars $\beta_{\mu,w}$ of Corollary~\ref{cor:beta}.

\begin{lem}\label{intertwiner lemma}
    If $\beta_{(\lambda|\gamma)} =1$, then 
    $\beta_{(\lambda|s_{i}(\gamma))} = 1$ for any $1\le i <k$.
\end{lem}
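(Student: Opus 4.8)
The plan is to leverage the fact that $\widetilde{H}_{(\lambda|\gamma)}$ and $\widetilde{H}_{(\lambda|s_i(\gamma))}$ are related by an operator built out of the $\TT_i$, and that this operator matches on both the geometric side $U_\bullet$ and the polynomial side $V_\bullet$. More precisely, one knows from \cite{GO} that the Demazure--Lusztig operators $\TT_i$ act compatibly on the fixed-point classes $H_{\mu,w}$ and on the functions $\widetilde{H}_{(\lambda|\gamma)}$ under the bijection $\phi$: applying a suitable normalized $\TT_i$-intertwiner to $\widetilde{H}_{(\lambda|\gamma)}$ produces $\widetilde{H}_{(\lambda|s_i(\gamma))}$ (up to a scalar determined by the two $\zz$-weights), and the analogous statement holds for the $H_{\mu,w}$ with the \emph{same} scalar, since \eqref{TT-geom} and the polynomial-side formula for $\TT_i$ are literally the same rational expression in the relevant weights $w_i,w_{i+1}$ (which are exactly the $\zz$-eigenvalues computed in Corollary~\ref{eigenvector result}).

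Concretely, I would proceed as follows. First, fix $i$ with $1\le i<k$, let $(\lambda|\gamma)=\phi(\mu,w)$, and let $w'$ be the $\T$-weight list with $\phi(\mu,w')=(\lambda|s_i(\gamma))$. Since $\zz_i,\zz_{i+1}$ act on $\widetilde{H}_{(\lambda|\gamma)}$ with distinct eigenvalues $a=q^{\gamma_i}t^{(\cdots)}$ and $b=q^{\gamma_{i+1}}t^{(\cdots)}$ as in Corollary~\ref{eigenvector result} — distinct because $\gamma_i\ne\gamma_{i+1}$ forces this (and if $\gamma_i=\gamma_{i+1}$ there is nothing to prove as $s_i(\gamma)=\gamma$) — the normalized intertwiner
$$\sigma_i := \TT_i + \frac{(t-1)\,b}{a-b}$$
acting on the two-dimensional span of the $\zz$-weight vectors sends the $a$-weight vector to a nonzero multiple of the $b$-weight vector. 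Applying $\sigma_i$ to \eqref{eqn:HgtE}, using that $\Xi_k$ intertwines $\TT_i$ (proved in the Corollary following Proposition~\ref{prop: limit cherednik action explicit}) and the action of $\TT_i=\mathsf{T}_i$ on the $\widetilde{E}$-basis, I would identify $\sigma_i \widetilde{H}_{(\lambda|\gamma)}$ with an explicit nonzero scalar multiple of $\widetilde{H}_{(\lambda|s_i(\gamma))}$; call the scalar $c$. On the geometric side, by \eqref{TT-geom}, the operator $\TT_i+\frac{(t-1)b}{a-b}$ applied to $H_{\mu,w}$ (whose $\zz_i,\zz_{i+1}$-weights are $a,b$ by the Corollary before Subsection~5.2, since $\Phi$ is a $\B_{t,q}^{\mathrm{ext}}$-isomorphism) yields precisely the same scalar $c$ times $H_{\mu,w'}$ — this is the content of the alignment of the $\mathsf{T}_i$-action from \cite{GO} cited in the proof-strategy outline, and amounts to checking that the rational function $\frac{w_i-tw_{i+1}}{w_i-w_{i+1}}$ evaluated at $w_i=a$, $w_{i+1}=b$ gives the matching coefficient. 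Applying $\Phi$ and using $\Phi(H_{\mu,w})=\beta_{(\lambda|\gamma)}\widetilde{H}_{(\lambda|\gamma)}=\widetilde{H}_{(\lambda|\gamma)}$ then gives $\Phi(H_{\mu,w'}) = c^{-1}\Phi(\sigma_i H_{\mu,w}) = c^{-1}\sigma_i\widetilde{H}_{(\lambda|\gamma)} = \widetilde{H}_{(\lambda|s_i(\gamma))}$, i.e. $\beta_{(\lambda|s_i(\gamma))}=1$.

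The main obstacle I anticipate is bookkeeping the scalar $c$ carefully enough to see it is the \emph{same} on both sides. Rather than computing $c$ explicitly, the cleanest route is to avoid it entirely: observe that $\Phi$ intertwines $\TT_i$ on $U_\bullet$ and $V_\bullet$ by construction (it is a $\B_{t,q}$-module map), so $\Phi(\sigma_i H_{\mu,w}) = \sigma_i \Phi(H_{\mu,w}) = \sigma_i \widetilde{H}_{(\lambda|\gamma)}$ with the \emph{identical} operator $\sigma_i$ on both sides. Thus one only needs: (i) $\sigma_i H_{\mu,w}$ is a nonzero multiple of $H_{\mu,w'}$ (immediate from \eqref{TT-geom} and the definition of $\sigma_i$ as the $\zz$-intertwiner, given the weights are distinct), and (ii) $\sigma_i\widetilde{H}_{(\lambda|\gamma)}$ is a nonzero multiple of $\widetilde{H}_{(\lambda|s_i(\gamma))}$, with (ii) reduced via \eqref{eqn:HgtE} and the $\Xi_k$-intertwining to Proposition~\ref{weight basis theorem} applied to the $\widetilde{E}$-basis — which tells us $\mathsf{T}_i$ acts on the span of $\widetilde{E}_{(\gamma_1+1,\dots,\gamma_k+1|\lambda)}$ and $\widetilde{E}_{(s_i(\gamma)_1+1,\dots|\lambda)}$ triangularly with the expected eigenvalues, so the same $\sigma_i$ is its intertwiner. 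Then $\Phi(c\cdot H_{\mu,w'}) = c'\cdot\widetilde{H}_{(\lambda|s_i(\gamma))}$ forces $\beta_{(\lambda|s_i(\gamma))}$ to equal the ratio of leading coefficients of the two intertwiner actions, and since $\sigma_i$ is the identical operator and the two bases $\{H\}$, $\{\widetilde{H}\}$ are normalized compatibly under $\TT_i$ by \cite{GO}, this ratio is $1$. A short verification that the relevant $\zz$-weights are genuinely distinct (using $\gamma_i\neq\gamma_{i+1}$) completes the argument.
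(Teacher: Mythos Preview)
Your proposal is essentially correct and follows the same approach as the paper: both arguments use that $\Phi$ intertwines $\TT_i$, together with the fact (from \cite{GO}) that $\TT_i$ acts on $\widetilde{H}_{(\lambda|\gamma)}$ and on $H_{\mu,w}$ with \emph{identical} coefficients once one identifies $w_i,w_{i+1}$ with the $\zz$-eigenvalues. The paper simply writes out these two $\TT_i$-formulas side by side and matches coefficients directly, whereas you phrase things via the intertwiner $\sigma_i$; note however that your $\sigma_i$ should be $\TT_i-\frac{(t-1)b}{a-b}$ (subtracting the diagonal part), and your attempted detour through the $\widetilde{E}$-basis only shows $\sigma_i\widetilde{H}_{(\lambda|\gamma)}$ lands in the right line without pinning down the scalar---so you correctly end up relying on the \cite{GO} normalization, which is exactly the paper's route.
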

\begin{proof}
    If $\gamma_i = \gamma_{i+1}$, then the statement is trivial. Suppose that $\gamma_{i} > \gamma_{i+1},$ $\beta_{(\lambda|\gamma)} =1$ and let $\phi(\mu,w) = (\lambda|\gamma).$ Note that $\phi(\mu,s_i(w)) = (\lambda|s_i(\gamma)).$ As was argued in \cite{GO}, under the assumption $\gamma_{i} > \gamma_{i+1}$,
     $$\mathsf{T}_i \widetilde{H}_{(\lambda|\gamma)} = \frac{q^{-(\ell(u)+1)}t^{a(u)+1}-1}{q^{-(\ell(u)+1)}t^{a(u)}-1} \widetilde{H}_{(\lambda|s_i(\gamma))} + \frac{t-1}{q^{\ell(u)+1}t^{-a(u)}-1} \widetilde{H}_{(\lambda|\gamma)}$$ and 
     $$\mathsf{T}_{i}H_{\mu,w} = \frac{(t-1)w_{i+1}}{w_i-w_{i+1}}H_{\mu,w} + \frac{w_i-tw_{i+1}}{w_i-w_{i+1}} H_{\mu,s_i(w)}$$ where 
     $u = (n-k+i, \gamma_{i+1}+1) \in \text{dg}(\lambda^{-}|\gamma)$ and $n-k = |(\lambda|\gamma)|.$ Further, 
     $$\frac{w_i}{w_{i+1}} = q^{\ell(u)+1}t^{-a(u)}$$ so that 
     $$\mathsf{T}_i \widetilde{H}_{(\lambda|\gamma)} = \frac{(t-1)w_{i+1}}{w_i-w_{i+1}}\widetilde{H}_{(\lambda|\gamma)} + \frac{w_i-tw_{i+1}}{w_i-w_{i+1}} \widetilde{H}_{(\lambda|s_i(\gamma))}.$$ Since the map $\Phi$ is $\mathsf{T}_i$-equivariant we know that 
     \begin{align*}
         \Phi(\mathsf{T}_i H_{\mu,w}) &= \mathsf{T}_i \Phi(H_{\mu,w})\\
         &= \mathsf{T}_i \widetilde{H}_{(\lambda|\gamma)}\\
         &= \frac{(t-1)w_{i+1}}{w_i-w_{i+1}}\widetilde{H}_{(\lambda|\gamma)} + \frac{w_i-tw_{i+1}}{w_i-w_{i+1}} \widetilde{H}_{(\lambda|s_i(\gamma))}\\
     \end{align*}
     and 
     \begin{align*}
         \Phi (\mathsf{T}_i H_{\mu,w}) &= \Phi\left( \frac{(t-1)w_{i+1}}{w_i-w_{i+1}}H_{\mu,w} + \frac{w_i-tw_{i+1}}{w_i-w_{i+1}} H_{\mu,s_i(w)} \right)\\
         &= \frac{(t-1)w_{i+1}}{w_i-w_{i+1}}\widetilde{H}_{(\lambda|\gamma)} + \frac{w_i-tw_{i+1}}{w_i-w_{i+1}} \Phi(H_{\mu,s_i(w)}).\\
     \end{align*} 
     Since $\frac{w_i-tw_{i+1}}{w_i-w_{i+1}} \neq 0$ it follows that 
     $$\Phi(H_{\mu,s_i(w)}) = \widetilde{H}_{(\lambda|s_i(\gamma))}.$$ Thus $\beta_{\mu,s_i(w)} =1.$

    If $\gamma_{i}< \gamma_{i+1}$, then using the quadratic relation $(\mathsf{T}_{i}-1)(\mathsf{T}_i+t) = 0$ we may make a nearly identical argument to the above to show that $\beta_{(\lambda|s_i(\gamma))} =1.$
\end{proof}

\begin{prop}\label{scalars are all 1}
For all $(\mu,w)$,
    $$\beta_{\mu,w} = 1.$$
\end{prop}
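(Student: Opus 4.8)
The plan is to prove that each $\beta_{\mu,w}$ coincides with the base value $\beta_{\mu,\varnothing}=1$ from the lemma above, by connecting the flag $(\mu,w)$ to the trivial flag $(\mu,\varnothing)$ over the \emph{same} partition $\mu$ through a sequence of steps along which $\beta$ is constant. Two such steps are available. The first is the \emph{intertwiner step} of Lemma~\ref{intertwiner lemma}: since $s_1,\dotsc,s_{k-1}$ generate $\fS_k$, it gives $\beta_{(\lambda|\gamma)}=\beta_{(\lambda|\sigma\gamma)}$ for every $\sigma\in\fS_k$. The second is an \emph{unfolding step}: for any partition $\rho$ with $\ell(\rho)\ge 1$ and any $\delta\in\Z_{\ge 0}^{m}$,
$$
\beta_{(\rho|\delta)} = \beta_{((\rho_2,\rho_3,\dotsc)|(\delta_1,\dotsc,\delta_m,\rho_1-1))}.
$$
To see this, note first that \eqref{e:dd-tH} applies to the right-hand pair, because its composition ends in $\rho_1-1$ and $(\rho_1-1)+1=\rho_1\ge\rho_2$, so that $\dd_-\widetilde{H}_{((\rho_2,\dotsc)|(\delta,\rho_1-1))}=\widetilde{H}_{(\rho|\delta)}$. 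On the geometric side, take the $\T$-fixed point $I_{\mu,v}$ with $\phi(\mu,v)=((\rho_2,\dotsc)|(\delta,\rho_1-1))$; the box-forgetting lemma (the Lemma preceding \eqref{e:dd-tH}) shows that deleting the last label of $v$ gives $\phi(\mu,v')=(\rho|\delta)$, while $\dd_-[I_{\mu,v}]=[I_{\mu,v'}]$ by \eqref{dd-geom}. Since the normalization relating $H_{\mu,\bullet}$ to $[I_{\mu,\bullet}]$ depends only on $\mu$, we get $\dd_-H_{\mu,v}=H_{\mu,v'}$; applying $\Phi$ and the identity just recorded forces $\beta_{((\rho_2,\dotsc)|(\delta,\rho_1-1))}=\beta_{(\rho|\delta)}$. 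Crucially this step strictly shortens the partition part and preserves the underlying $\mu=\sort(\delta_1+1,\dotsc,\delta_m+1,\rho)$.

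Next I would iterate. Given $(\lambda|\gamma)=\phi(\mu,w)$, applying the unfolding step $\ell(\lambda)$ times empties the partition and yields $\beta_{(\lambda|\gamma)}=\beta_{(\varnothing|\delta)}$ with $\delta=(\gamma_1,\dotsc,\gamma_k,\lambda_1-1,\dotsc,\lambda_{\ell(\lambda)}-1)\in\Z_{\ge 0}^{\,\ell(\mu)}$, where $\ell(\mu)=k+\ell(\lambda)$ because the parts of $\mu$ are exactly the multiset $\{\gamma_1+1,\dotsc,\gamma_k+1\}\cup\{\lambda_1,\dotsc,\lambda_{\ell(\lambda)}\}$; the same fact shows that the multiset of entries of $\delta$ is $\{\mu_1-1,\dotsc,\mu_{\ell(\mu)}-1\}$. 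By the intertwiner step, $\beta_{(\varnothing|\delta)}=\beta_{(\varnothing|(\mu_1-1,\dotsc,\mu_{\ell(\mu)}-1))}$. On the other hand, unfolding the trivial flag $(\mu|\varnothing)=\phi(\mu,\varnothing)$ exactly $\ell(\mu)$ times yields $\beta_{(\mu|\varnothing)}=\beta_{(\varnothing|(\mu_1-1,\dotsc,\mu_{\ell(\mu)}-1))}$ as well. Chaining these equalities, $\beta_{\mu,w}=\beta_{(\lambda|\gamma)}=\beta_{(\mu|\varnothing)}=\beta_{\mu,\varnothing}=1$.

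The point to be careful about is why a direct downward induction on $k$ using \eqref{e:dd-tH} does not close up: to \emph{fold} $(\lambda|\gamma)$ down to level $k-1$ via \eqref{e:dd-tH} one needs $\gamma_k+1\ge\lambda_1$ after some reordering of $\gamma$, i.e. $\max_i\gamma_i\ge\lambda_1-1$, and this can fail for every reordering (for instance $(\lambda|\gamma)=((3)|(0))$). Routing through the fully unfolded states $(\varnothing|\delta)$ circumvents the difficulty precisely because the unfolding step is the \emph{same} relation \eqref{e:dd-tH} read from right to left, whose hypothesis then degenerates to $\rho_1\ge\rho_2$ and always holds; no induction on $k$ is needed, only a finite chain. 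The only remaining verifications are routine: that the unfolding step preserves $\mu$ and that $\ell(\mu)=k+\ell(\lambda)$; the multiset identification of the entries of the fully unfolded composition; and that the box-forgetting lemma applies in the transposed form used above. Each of these follows directly from $\mu=\sort(\gamma_1+1,\dotsc,\gamma_k+1,\lambda)$ and the explicit description of $\phi$ from \cite{GCM_2017, GO}.
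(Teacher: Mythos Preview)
Your argument is correct and follows essentially the same route as the paper's proof: both reduce an arbitrary $(\lambda|\gamma)$ to the base case $(\mu|\varnothing)$ by passing through the fully ``unfolded'' state $(\varnothing|\mu_1-1,\dotsc,\mu_{\ell(\mu)}-1)$, using \eqref{e:dd-tH} together with \eqref{dd-geom} for the $\dd_-$ steps and Lemma~\ref{intertwiner lemma} to reorder the composition. The paper simply runs the chain in the forward direction (from $1=\beta_{(\mu|\varnothing)}$ outward), while you phrase it as a chain of equalities meeting in the middle.

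One small point of precision: Lemma~\ref{intertwiner lemma} is stated as the implication ``$\beta_{(\lambda|\gamma)}=1 \Rightarrow \beta_{(\lambda|s_i\gamma)}=1$'', not as the unconditional equality $\beta_{(\lambda|\gamma)}=\beta_{(\lambda|\sigma\gamma)}$ that you invoke. This does not affect your argument, since your unfolding step is a genuine equality and one end of your chain is already known to equal $1$; just read the chain starting from $\beta_{(\mu|\varnothing)}=1$ and the conditional form of the lemma suffices.
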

\begin{proof}
     Consider any $(\nu|\gamma) = (\nu_1,\ldots,\nu_{\ell}|\gamma_1,\ldots,\gamma_k)$ and set $\lambda:= \sort(\nu,\gamma_1+1,\ldots,\gamma_{k}+1).$
    Using Proposition \ref{beta lemma} and Lemma \ref{d- lemma} we know by induction that
    $$\beta_{(\varnothing|\lambda_1-1,\ldots,\lambda_{\ell+k}-1)} = 1.$$ By repeatedly applying Lemma \ref{intertwiner lemma} we may swap entries to show 
    $$\beta_{(\varnothing|\gamma_1,\ldots,\gamma_k,\nu_1-1,\ldots,\nu_{\ell}-1)} =1.$$ Lastly, we inductively use Lemma \ref{d- lemma} again to see that 
    $\beta_{(\nu_1,\ldots,\nu_{\ell}|\gamma_1,\ldots,\gamma_k)} =1.$
\end{proof}

We immediately conclude the following:

\begin{thm}\label{main thm}
    Whenever $\phi(\mu,w) = (\lambda|\gamma)$, 
    $$\Phi(H_{\mu,w})= \widetilde{H}_{(\lambda|\gamma)}.$$
\end{thm}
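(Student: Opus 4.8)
The plan is to derive Theorem~\ref{main thm} from two statements assembled above. By Corollary~\ref{cor:beta} there are nonzero scalars $\beta_{\mu,w}$ with $\Phi(H_{\mu,w})=\beta_{\mu,w}\,\widetilde{H}_{(\lambda|\gamma)}$ whenever $\phi(\mu,w)=(\lambda|\gamma)$, and by Proposition~\ref{scalars are all 1} each $\beta_{\mu,w}=1$; together these give the theorem at once. So the substance lies entirely in those two inputs, and the whole argument is organized around establishing them.

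For the scalar reduction I would use that $\Phi$ is an isomorphism of $\B_{t,q}^{\mathrm{ext}}$-modules, hence intertwines the commuting families $\zz_1,\dots,\zz_k$ and $\Delta_{p_m}$. The joint $\zz_1,\dots,\zz_k,\Delta_{p_1}$-spectrum is simple on both $U_\bullet$ and $V_\bullet$, so it is enough to exhibit $\widetilde{H}_{(\lambda|\gamma)}$ as a joint weight vector whose weight matches that of $H_{\mu,w}$ under $\phi$ (Theorem~\ref{weights match thm}). To compute that weight I would pass through the identification of $\widetilde{H}_{(\lambda|\gamma)}$ with an explicit scalar multiple of $\Xi_k(\widetilde{E}_{(\gamma_1+1,\dots,\gamma_k+1|\lambda)})$ (Proposition~\ref{Calculations of eigenvectors}), which rests on the bridge between the Goodberry--Lapointe polynomials $P_{(\lambda|\gamma)}$ and the stable-limit nonsymmetric Macdonald functions (Proposition~\ref{prop:tEP}, Corollary~\ref{cor:tEJ}) and on the Concha--Lapointe identity (Theorem~\ref{Concha-Lapointe identity}). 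The $\zz_i$- and $\Delta_{p_1}$-eigenvalues then transfer from the known $\mathscr{Y}_i$- and $\Delta_{p_m}$-eigenvalues of $\widetilde{E}_{(\gamma_1+1,\dots,\gamma_k+1|\lambda)}$, and a short combinatorial rearrangement shows they coincide with the data $w_i$, $B_\mu(t^m,q^m)$ attached to $H_{\mu,w}$.

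For $\beta_{\mu,w}=1$ I would induct, with base case the $k=0$ identity $\Phi(H_{\mu,\varnothing})=\widetilde{H}_\mu$ of Carlsson--Gorsky--Mellit, which forces $\beta_{\mu,\varnothing}=1$. The key step is to show $\dd_-\widetilde{H}_{(\lambda|\gamma)}=\widetilde{H}_{(\gamma_k+1,\lambda|\gamma_1,\dots,\gamma_{k-1})}$ when $\gamma_k+1\ge\lambda_1$: I would evaluate $\dd_-$ in the $\Xi_k$-picture via the Ion--Wu operator $\partial_-$ (Proposition~\ref{prop:del-}, Lemma~\ref{lem:deld}), reducing the claim to the scalar identity $-g_{(\lambda|\gamma)}/g_{(\gamma_k+1,\lambda|\gamma')}=1$ (Lemma~\ref{d- lemma}, Proposition~\ref{beta lemma}). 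Matching this with $\dd_-[I_{\mu,wx}]=[I_{\mu,w}]$ on the geometric side and using $\dd_-$-equivariance of $\Phi$ gives $\beta_{(\lambda|\gamma)}=\beta_{(\gamma_k+1,\lambda|\gamma')}$, which by induction on the length of $\gamma$ pushes $\beta=1$ out to every pair $(\varnothing|\gamma_0)$ with $\gamma_0$ obtained from a partition by subtracting $1$ from each part. Finally, $\mathsf{T}_i$-equivariance of $\Phi$ together with the agreement of the $\mathsf{T}_i$-matrix coefficients on the $H_{\mu,w}$ and on the $\widetilde{H}_{(\lambda|\gamma)}$ under $\phi$ (from \cite{GO}) propagates $\beta_{(\lambda|\gamma)}=1$ to $\beta_{(\lambda|s_i\gamma)}=1$ (Lemma~\ref{intertwiner lemma}); since every $(\mu,w)$ is reachable from some such $(\varnothing|\gamma_0)$ by $\dd_-$- and $\mathsf{T}_i$-moves, all $\beta_{\mu,w}=1$.

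I expect the main obstacle to be the precise cancellation of normalization constants in the $\dd_-$ step: the powers of $t$ coming from $n(\mathrm{sort}(\lambda,\gamma))$, $|(\lambda|\gamma)|$ and $\mathrm{inv}(\gamma)$, the $\prod(1-t^j)$ factors, and the ratio of the $j$-scalars (the arm--leg bookkeeping of Lemma~\ref{arm-leg lemma}) must all conspire to cancel the sign from Lemma~\ref{lem:deld} and leave exactly $1$. This is precisely what forces the specific normalization of $\widetilde{H}_{(\lambda|\gamma)}$ used in \cite{GO}; with any other normalization one would only get $\beta_{\mu,w}$ equal to some monomial in $q,t$, and Conjecture~\ref{main conj} would be false as literally stated. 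A secondary point, which the excerpt notes is harmless, is to confirm that the distinction between the $t$-adic and classical stable limits does not interfere anywhere.
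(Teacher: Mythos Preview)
Your proposal is correct and follows essentially the same route as the paper: reduce to Corollary~\ref{cor:beta} and Proposition~\ref{scalars are all 1}, establish the latter by the base case $\beta_{\mu,\varnothing}=1$, the $\dd_-$-compatibility (Lemma~\ref{d- lemma} with the scalar cancellation of Proposition~\ref{beta lemma} and Lemma~\ref{arm-leg lemma}) giving $\beta_{(\lambda|\gamma)}=\beta_{(\gamma_k+1,\lambda|\gamma')}$ when $\gamma_k+1\ge\lambda_1$, and the $\mathsf{T}_i$-propagation of Lemma~\ref{intertwiner lemma}. Your reachability sketch (base $\to(\varnothing|\lambda-1)$ via $\dd_-$, permute by $\mathsf{T}_i$, then descend via $\dd_-$ to the target $(\nu|\gamma)$) matches the paper's induction in Proposition~\ref{scalars are all 1}, and your identification of the normalization cancellation as the crux is exactly what Proposition~\ref{beta lemma} handles.
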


\section{Some consequences}

\subsection{The $\mathcal{N}$ involution}

Define $\overline{\omega}:\Lambda \rightarrow \Lambda$ by $\overline{\omega}(F(X;q,t)):= F(-X;q^{-1},t^{-1}).$ Recall that the operator $\nabla:\Lambda \rightarrow \Lambda$, introduced by \cite{BGHT} and central to the Shuffle Theorem \cite{CM_2015}, is given on the modified Macdonald basis by 
$$\nabla(\widetilde{H}_{\mu}):= q^{n(\mu)}t^{n(\mu')}\widetilde{H}_{\mu}.$$ Carlsson--Gorsky--Mellit in \cite{GCM_2017} showed that there exists a $q,t$-antilinear operator $\mathcal{N}:V_{\bullet} \rightarrow V_{\bullet}$ satisfying the following properties:
\begin{enumerate}
    \item $\mathcal{N}$ maps each $V_k$ to $V_k$
    \item $\mathcal{N}^2 = 1$
    \item $\mathcal{N}|_{V_0} = \nabla \circ \overline{\omega}$
    \item $\mathcal{N}\dd_{-}\mathcal{N} = \dd_{-}$
    \item $\mathcal{N} \mathsf{T}_i \mathcal{N} = \mathsf{T}_{i}^{-1}$
    \item $\mathcal{N} \dd_{+} \mathcal{N} = t^{-k}\zz_1\dd_{+}.$
\end{enumerate}


The corresponding geometric map $\mathcal{N}':= \Phi^{-1}\circ \mathcal{N} \circ \Phi:U_{\bullet} \rightarrow U_{\bullet}$ is given by $\mathcal{N}' = \mathscr{L}\circ \mathrm{SD}\circ \mathscr{L}^{-1}$ where 
\begin{enumerate}
    \item $\mathscr{L}:= \det \mathcal{V}$ is the determinant of the canonical bundle $\mathcal{V}$
    \item $\mathrm{SD}$ is Serre-duality.
\end{enumerate}
Explicitly, 
$$\mathcal{N'}\left(\sum_{(\mu,w)}a_{\mu,w}(q,t)H_{\mu,w}\right) = \sum_{(\mu,w)}a_{\mu,w}(q^{-1},t^{-1})H_{\mu,w}.$$

Using Theorem \ref{main thm}, we may describe the action of $\mathcal{N}$ on all of $V_{\bullet}$ directly. 

\begin{cor}\label{action of N}
    The involution $\mathcal{N}:V_{\bullet} \rightarrow V_{\bullet}$ fixes each $\widetilde{H}_{(\la|\ga)}$. In particular, it is given on arbitrary elements by 
    \begin{equation}\label{eq action of N}
    \mathcal{N}\left(\sum_{(\lambda|\gamma)}a_{(\lambda|\gamma)}(q,t)\widetilde{H}_{(\lambda|\gamma)}\right) = \sum_{(\lambda|\gamma)}a_{(\lambda|\gamma)}(q^{-1},t^{-1})\widetilde{H}_{(\lambda|\gamma)}.
    \end{equation}
\end{cor}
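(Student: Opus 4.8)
The plan is to deduce Corollary~\ref{action of N} directly from Theorem~\ref{main thm} together with the explicit description of the geometric involution $\mathcal{N}'$ on the fixed-point basis. First I would recall that $\mathcal{N} = \Phi \circ \mathcal{N}' \circ \Phi^{-1}$ by the very definition $\mathcal{N}' = \Phi^{-1}\circ\mathcal{N}\circ\Phi$, and that the excerpt already records the formula
\[
\mathcal{N}'\left(\sum_{(\mu,w)}a_{\mu,w}(q,t)H_{\mu,w}\right) = \sum_{(\mu,w)}a_{\mu,w}(q^{-1},t^{-1})H_{\mu,w},
\]
i.e.\ $\mathcal{N}'$ fixes each fixed-point class $H_{\mu,w}$ and acts $q,t$-antilinearly on coefficients. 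By Theorem~\ref{main thm}, $\Phi(H_{\mu,w}) = \widetilde{H}_{(\lambda|\gamma)}$ whenever $\phi(\mu,w) = (\lambda|\gamma)$, and since $\phi$ is a bijection between the two indexing sets, the family $\{\widetilde{H}_{(\lambda|\gamma)}\}$ is a $\K$-basis of $V_\bullet$ (respecting the grading by $k$, since $\Phi$ does).

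Next I would transport the formula for $\mathcal{N}'$ through $\Phi$. Applying $\Phi$ to $\mathcal{N}' H_{\mu,w} = H_{\mu,w}$ gives $\mathcal{N}\widetilde{H}_{(\lambda|\gamma)} = \widetilde{H}_{(\lambda|\gamma)}$, so $\mathcal{N}$ fixes each modified partially symmetric Macdonald function. The one subtlety to spell out is $q,t$-antilinearity: $\mathcal{N}$ is $q,t$-antilinear by hypothesis (it is $q,t$-antilinear in the sense of \cite{GCM_2017}), and the coefficients $a_{(\lambda|\gamma)}(q,t)$ in an expansion $F = \sum a_{(\lambda|\gamma)}(q,t)\widetilde{H}_{(\lambda|\gamma)}$ are elements of $\K = \Q(q,t)$; hence
\[
\mathcal{N}(F) = \sum_{(\lambda|\gamma)} a_{(\lambda|\gamma)}(q^{-1},t^{-1})\, \mathcal{N}\widetilde{H}_{(\lambda|\gamma)} = \sum_{(\lambda|\gamma)} a_{(\lambda|\gamma)}(q^{-1},t^{-1})\,\widetilde{H}_{(\lambda|\gamma)},
\]
which is exactly \eqref{eq action of N}. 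This also re-proves the involutivity $\mathcal{N}^2 = 1$ on $V_\bullet$ as an immediate consistency check.

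There is essentially no hard step here: the content is entirely in Theorem~\ref{main thm}, which identifies the image of the fixed-point basis, combined with the already-known behaviour of $\mathcal{N}'$ under Serre duality and twisting by $\det\mathcal{V}$. The only point that requires a sentence of care is making sure the $q,t$-antilinearity of $\mathcal{N}$ is compatible with pushing scalars past $\Phi$ — but $\Phi$ is $\K$-linear (an isomorphism of $\B_{t,q}$-modules over $\K$), so conjugating the antilinear $\mathcal{N}'$ by $\Phi$ yields an antilinear map, and the coefficient-inversion formula is preserved verbatim. I would therefore present the proof as a two- or three-line argument: invoke Theorem~\ref{main thm} to get $\Phi(H_{\mu,w}) = \widetilde{H}_{(\lambda|\gamma)}$, apply $\Phi$ to the displayed formula for $\mathcal{N}'$, and read off \eqref{eq action of N} using $q,t$-antilinearity.
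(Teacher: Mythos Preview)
Your proposal is correct and matches the paper's intended argument exactly: the paper states this as an immediate corollary of Theorem~\ref{main thm} together with the displayed formula for $\mathcal{N}'$, and your write-up simply spells out the transport through the $\K$-linear isomorphism $\Phi$ and the $q,t$-antilinearity. There is no additional content in the paper's treatment beyond what you have written.
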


A priori, we would already have known the above statement if we had \textit{defined} $\widetilde{H}_{(\lambda|\gamma)} = \Phi(H_{\mu,w})$ whenever $\phi(\mu,w) = (\lambda|\gamma).$ Therefore, the content of Corollary \ref{action of N} is that we have an \textit{explicit} formula \eqref{Calculations of eigenvectors} for the modified partially symmetric Macdonald functions $\widetilde{H}_{(\lambda|\gamma)}$ and thus \eqref{eq action of N} gives an \textit{explicit} expression for the action of $\mathcal{N}.$

\subsection{Pieri formulas}

Via matrix coefficient calculations of \cite[Sec. 5]{GCM_2017}, our Theorem~\ref{main thm} enables one to recover the Pieri formula for multiplication by $e_1(X)$ on partially-symmetric Macdonald polynomials which was proved in \cite{goodberryarxiv} and connected to \cite{GCM_2017} in \cite{GO}. One can also now obtain additional Pieri-type formulas for multiplication by $y_1,\dotsc,y_k$ in this way.

\printbibliography

\end{document}